\newcommand {\stirlingf}[2]{\genfrac[]{0pt}{}{#1}{#2}}
\newcommand {\stirlings}[2]{\genfrac\{\}{0pt}{}{#1}{#2}}
\newcommand {\lah}[2]{\genfrac\lfloor \rfloor{0pt}{}{#1}{#2}}
\newtheorem{theorem}{Theorem}
\newtheorem{corollary}[theorem]{Corollary}
\newtheorem{definition}[theorem]{Definition}
\newtheorem{lemma}[theorem]{Lemma}
\newtheorem{proposition}[theorem]{Proposition}
\newtheorem{remark}[theorem]{Remark}
\newenvironment{proof}[1][Proof]{\noindent\textbf{#1.} }{\ \rule{0.5em}{0.5em}}
\begin{document}

\title{Associated Lah numbers and $r$-Stirling numbers}
\author{Hac\`{e}ne Belbachir and Imad Eddine Bousbaa \\
%EndAName
USTHB, Faculty of Mathematics\\
RECITS Laboratory, DG-RSDT\\
BP 32, El Alia, 16111, Bab Ezzouar, Algiers, Algeria\\
hbelbachir@usthb.dz \& ibousbaa@usthb.dz}
\maketitle
\date{}

\begin{abstract}
We introduce the associated Lah numbers. Some recurrence relations and
convolution identities are established. An extension of the associated
Stirling and Lah numbers to the $r$-Stirling and $r$-Lah numbers are also
given. For all these sequences we give combinatorial interpretation,
generating functions, recurrence relations, convolution identities. In the
sequel, we develop a section on nested sums related to binomial coefficient.
\end{abstract}

\textbf{AMS Classification: }11B73, 05A19.

\textbf{Keywords: }Generating \ functions; Associated Stirling and Lah
numbers; Convolution; Combinatorial interpretation; Recurrence relation.

\section{Introduction}

The Stirling numbers of the first and second kind, denoted respectively $%
\stirlingf {n}{k}$ and $\stirlings {n}{k}$, are defined by%
\begin{equation}
x(x+1)\cdots (x+n-1)=\sum_{k=0}^{n}\stirlingf {n}{k}x^{k},
\label{ttt}
\end{equation}%
and%
\begin{equation}
x^{n}=\sum\limits_{k=0}^{n}\stirlings {n}{k}x(x-1)\cdots (x-k+1).
\label{ppp}
\end{equation}

It is well known that $\stirlingf {n}{k}$ is the number of permutations of
the set $Z_{n}:=\{1,2,\ldots ,n\}$ with $k$ cycles and that $\stirlings
{n}{k}$ is the number of partitions of the set $Z_{n}$ into $k$ non empty
subsets \cite[Ch. 5]{MR1999993}, \cite[Ch. 4]{MR1949650}.

The Lah numbers $\lah {n}{k}$ (Stirling numbers of the
third kind), see \cite[pp. 44]{MR1397498}, are defined as the sum of
products of the Stirling number of the first kind and the Stirling numbers
of the second kind%
\begin{equation}
\lah {n}{k}=\sum\limits_{j=k}^{n}\stirlingf {n}{j}%
\stirlings {j}{k},
\end{equation}%
and count the number of partitions of the set $Z_{n}$ into $k$ ordered
lists. According to \ref{ttt} and \ref{ppp}, they satisfy%
\begin{equation*}
x(x+1)\cdots (x+n-1)=\sum\limits_{k=0}^{n}\lah
{n}{k}x(x-1)\cdots (x-k+1),
\end{equation*}%
see for instance \cite[eq 8]{BelBel13}.

Broder \cite{MR743795} gives a generalization of the Stirling numbers of the
first and second kind the so-called $r$-Stirling numbers of the first and
second kind, denoted respectively $\stirlingf {n}{k}_{r}$ and $\stirlings
{n}{k}_{r}$, by adding restriction on the elements of $Z_{n}$ : the $\stirlingf {n}{k}_{r}$ is the number of permutations of the set $Z_{n}$ with $k$
cycles such that the $r$ first elements are in distinct cycles and the $%
\stirlingf {n}{k}_{r}$ is the number of partitions of the set $Z_{n}$ into $%
k $ subsets such that the $r$ first elements are in distinct subsets. The $r$%
-Lah numbers $\lah {n}{k}_{r}$, see \cite{BelBel13}, count
the number of partitions of the set $Z_{n}$ into $k$ ordered lists such that
the $r$ first elements are in distinct lists.

These three sequences satisfy respectively the following recurrence relations%
\begin{eqnarray}
\stirlingf {n}{k}_{r} &=&\stirlingf {n-1}{k-1}_{r}+\left( n-1\right) \stirlingf {n-1}{k}_{r},  \label{recS1} \\
\stirlings {n}{k}_{r} &=&\stirlings {n-1}{k-1}_{r}+k\stirlings{n-1}{k}_{r},  \label{recS22} \\
\lah {n}{k}_{r} &=&\lah{n-1}{k-1}_{r}+\left( n+k-1\right) \lah {n-1}{k}_{r}. \label{recLah}
\end{eqnarray}%
with $\stirlingf {n}{k}_{r}=\stirlings {n}{k}_{r}=\lah
{n}{k}_{r}=\delta _{n,k}$ for $k=r$ , where $\delta $ is the Kronecker
delta, and $\stirlingf {n}{k}_{r}=\stirlings {n}{k}_{r}=\lah {n}{k}_{r}=0$ for $n<r$.

For $r=1$ and $r=0$, these numbers coincide with the classical Stirling
numbers of both kinds and with the classical Lah numbers.

Comtet \cite[pp. 222]{MR1999993} define an other generalization of the
Stirling numbers of both kinds by adding a restriction on the number of
elements by cycle or subset and call them, for $s\geqslant 1$, the $s$%
-associated Stirling numbers of the first kind $\stirlingf {n}{k}^{(s)}$
and of the second kind $\stirlings {n}{k}^{(s)}$. The $\stirlingf {n}{k}%
^{(s)}$ is the number of permutations of the set $Z_{n}$ with $k$ cycles
such that, each cycle has at least $s$ elements. The $\stirlings
{n}{k}^{(s)}$ is the number of partitions of the set $Z_{n}$ into $k$
subsets such that, each subset has at least $s$ elements. They have, each
one, an explicit formula, see for instance \cite[Eq 4.2, Eq 4.9]{MR600368}:%
\begin{eqnarray}
\stirlingf {n}{k}^{(s)} &=&\frac{n!}{k!}\sum_{\substack{ i_{1}+i_{2}+\cdots
+i_{k}=n  \\ i_{j}\geq s}}\frac{1}{i_{1}i_{2}\cdots i_{k}},  \label{e} \\
\stirlings {n}{k}^{(s)} &=&\frac{n!}{k!}\sum_{\substack{ %
i_{1}+i_{2}+\cdots +i_{k}=n  \\ i_{j}\geq s}}\frac{1}{i_{1}!i_{2}!\cdots
i_{k}!}.  \label{ee}
\end{eqnarray}

The generating functions are respectively 
\begin{eqnarray}
\sum_{n\geq sk}\stirlingf {n}{k}^{(s)}\frac{x^{n}}{n!} &=&\frac{1}{k!}\left(
-\ln {(1-x)}-\sum_{i=1}^{s-1}\frac{x^{i}}{i}\right) ^{k},  \label{genes1} \\
\sum_{n\geq sk}\stirlings {n}{k}^{(s)}\frac{x^{n}}{n!} &=&\frac{1}{k!}%
\left( \exp \left( x\right) -\sum_{i=0}^{s-1}\frac{x^{i}}{i!}\right) ^{k}.
\label{genes2}
\end{eqnarray}

For $s=2$, these numbers are reduced to the specific associated Stirling
numbers of both kinds, see for instance \cite[pp. 73]{MR1949650}.

Note that, from (\ref{e}) and (\ref{ee}), for $n=sk$, we get 
\begin{equation}
\begin{tabular}{ll}
$\stirlingf {sk}{k}^{(s)}=\dfrac{\left( sk\right) !}{k!s^{k}}$ \ \ and & $%
\stirlings {sk}{k}^{(s)}=\dfrac{\left( sk\right) !}{k!\left( s!\right)^{k}}.$%
\end{tabular}
\label{koko}
\end{equation}

Ahuja and Enneking \cite{MR536963} give a generalization of the Lah numbers
called the associated Lah numbers using an analytic approach. In Sloane \cite%
[A076126]{MR1992789}, we have a definition of the associated Lah numbers $%
\lah {n}{k}^{(2)}$ as the number of partitions of the set $%
Z_{n}$ into $k$ ordered lists such that each list has at least $2$ elements.
They satisfy the following explicit formula%
\begin{equation}
\lah {n}{k}^{(2)}=\frac{n!}{k!}\binom{n-k-1}{k-1},
\end{equation}%
and have the double generating function 
\begin{equation}
\sum_{n\geqslant 2}\sum_{k=1}^{\left\lfloor n/2\right\rfloor
}\lah {n}{k}^{(2)}y^{k}\frac{x^{n}}{n!}=\exp \left( y\frac{x^{2}}{1-x}\right) -1,  \label{ddddddddddddddd}
\end{equation}%
they consider $k\geqslant 1$, which means there is at least one part.

Hsu and Shiue \cite{MR1618435} defined a Stirling-type pair $\{S^{1},S^{2}\}$
as a unified approach to the Stirling numbers, this approach generalize
degenerate Stirling numbers \cite{MR531621}, Weighted Stirling numbers \cite%
{MR570168, MR599657}, $r$-Whitney numbers \cite{MR1415279, MR2926106} and
many other ones. The authors and Belkhir in \cite{BelBou141} and the authors
in \cite{MR25} give a combinatorial approach to special cases of the
Stirling-type pair. Howard \cite{MR742846}\ extend the associated
generalization to the Weighted Stirling numbers. Note that the Stirling-type
pair does not generalize the associated Stirling numbers. Motivated by this,
we introduce and develop the $s$-associated Lah numbers and the $s$%
-associated $r$-Stirling numbers.

In section 2, we define the $s$-associated Lah numbers $\lah {n}{k}^{(s)}$, $n\geqslant sk$, by a combinatorial approach
analogous to Comtet's generalization. We derive an explicit formula, a
triangular recurrence relation, a combinatorial identity and some generating
functions. We study, in section 3, some nested sums related to binomial
coefficients in order to develop, in section 4, a generalization of the
Stirling numbers of the three kinds using the two restrictions (Broder's and
Comtet's ones), we call them respectively the $s$-associated $r$-Stirling
numbers of the first kind $\stirlingf {n}{k}_{r}^{(s)}$, the $s$-associated $%
r$-Stirling numbers of the second kind $\stirlings {n}{k}_{r}^{(s)}$ and
the $s$-associated $r$-Lah numbers $\lah {n}{k}_{r}^{(s)}$%
. We give some recurrence relations and combinatorial identities in sections
5 and 6. Cross recurrences and convolution identities are established in
sections 7 and 8. In section 9, we propose some generating functions of the $%
s$-associated $r$-Stirling numbers.

\section{The $s$-associated Lah numbers}

We start by introducing the $s$-associated Lah numbers.

\begin{definition}
The $s$-associated Lah number, denoted by $\lah
{n}{k}^{(s)}$, is the number of partitions of $Z_{n}$ into $k$ order lists
such that each list contains at least $s$ elements.
\end{definition}

\begin{theorem}
The $s$-associated Lah numbers obey to the following 'triangular' recurrence
relation, for $n\geqslant sk$,%
\begin{equation}
\lah {n}{k}^{(s)}=\binom{n-1}{s-1}s!\lah {n-s}{k-1}^{(s)}+(n+k-1)\lah {n-1}{k}^{(s)}, \label{srLah}
\end{equation}%
with $\lah {n}{0}^{(s)}=\delta _{n,0}$ for $k=0$, where $%
\delta $ is the Kronecker delta, and $\lah {n}{k}^{(s)}=0$
for $n<sk$
\end{theorem}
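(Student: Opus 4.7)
The plan is to give a bijective/combinatorial proof by tracking the fate of the distinguished element $n$ in a partition of $Z_n$ into $k$ ordered lists, each of size at least $s$. Split all such configurations into two classes according to the size of the list containing $n$.

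In the first class, the list containing $n$ has size exactly $s$. To build such a configuration, I would choose the other $s-1$ companions of $n$ from $\{1,\ldots,n-1\}$ in $\binom{n-1}{s-1}$ ways, order the resulting $s$-element set in $s!$ ways to form the list, and then partition the remaining $n-s$ elements into $k-1$ ordered lists with each list having at least $s$ elements; by definition the last step contributes $\lah{n-s}{k-1}^{(s)}$. This accounts for the first summand $\binom{n-1}{s-1}s!\,\lah{n-s}{k-1}^{(s)}$.

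In the second class, the list containing $n$ has size at least $s+1$, so deleting $n$ from that list leaves every list with at least $s$ elements. I would produce such configurations by first forming an $(s)$-associated Lah partition of $\{1,\ldots,n-1\}$ into $k$ ordered lists, counted by $\lah{n-1}{k}^{(s)}$, and then inserting $n$ into some chosen position of some chosen list. A list of length $\ell$ admits $\ell+1$ insertion positions (one before each element and one at the end), so the total number of insertion positions across the $k$ lists is $\sum_{j=1}^{k}(\ell_j+1)=(n-1)+k=n+k-1$, giving the second summand $(n+k-1)\lah{n-1}{k}^{(s)}$.

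Summing the two disjoint cases yields the stated recurrence, and the boundary conditions $\lah{n}{0}^{(s)}=\delta_{n,0}$ and $\lah{n}{k}^{(s)}=0$ for $n<sk$ are immediate from the combinatorial definition. The only delicate point, which I would state carefully, is the insertion-count $(n+k-1)$: one must verify that the map (partition of $n-1$, insertion position) $\mapsto$ (partition of $n$ with $n$ in a list of size $\geq s+1$) is a bijection, which follows because $n$ can be uniquely removed from its list to recover both ingredients, and the resulting shorter list still has size $\geq s$.
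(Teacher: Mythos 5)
Your proposal is correct and is essentially the paper's own argument: both proofs condition on whether the list containing the element $n$ has exactly $s$ elements (giving $\binom{n-1}{s-1}s!\lah{n-s}{k-1}^{(s)}$) or more, in which case $n$ is inserted into one of the $(n-1)+k$ positions of a partition counted by $\lah{n-1}{k}^{(s)}$. Your explicit verification that deleting $n$ in the second case still leaves every list with at least $s$ elements is a welcome precision, but it does not change the approach.
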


\begin{proof}
Let us consider the $n^{th}$ elements, if it belongs to a list containing
exactly $s$ elements, so we have $\binom{n-1}{s-1}$ ways to choose the
remaining $\left( s-1\right) $ elements and $s!$ ways to order them into the
cited list, then distribute the $\left( n-s\right) $ remaining elements into
the $\left( k-1\right) $ remaining lists such that each list have at least $%
s $ elements and we have $\lah {n-s}{k-1}^{(s)}$ ways to
do it. Thus, we get $\binom{n-1}{s-1}s!\lah
{n-s}{k-1}^{(s)}$ possibilities. Else, we consider all the possibilities of
ordering $\left( n-1\right) $ elements into $k$ lists under the usual
condition which can be done by $\lah {n-1}{k}^{(s)}$ ways,
then add the $n^{th}$ elements next to an other and we have $n-1$
possibilities, or as head of each list and we have $k$ possibilities, this
gives $(n+k-1)\lah {n-1}{k}^{(s)}$ possibilities.
\end{proof}

For $s=1$ and $s=2$, we get Lah numbers and associated Lah numbers
respectively.

For $s=3$, we obtain the following table, for $n\leq 15$,%
\begin{equation*}
\begin{tabular}{c|ccccc}
\hline
$n\backslash k$ & $1$ & $2$ & $3$ & $4$ & $5$ \\ \hline
$3$ & $6$ &  &  &  &  \\ 
$4$ & $24$ &  &  &  &  \\ 
$5$ & $120$ &  &  &  &  \\ 
$6$ & $720$ & $360$ &  &  &  \\ 
$7$ & $5040$ & $5040$ &  &  &  \\ 
$8$ & $40320$ & $60480$ &  &  &  \\ 
$9$ & $362880$ & $725760$ & $60480$ &  &  \\ 
$10$ & $3628800$ & $9072000$ & $1814400$ &  &  \\ 
$11$ & $39916800$ & $119750400$ & $39916800$ &  &  \\ 
$12$ & $479\,001\,600$ & $1676\,505\,600$ & $798\,336\,000$ & $19\,958\,400$
&  \\ 
$13$ & $6227\,020\,800$ & $\allowbreak 24\,\allowbreak 908\,083\,200$ & $%
15\,567\,552\,000$ & $1037\,836\,800$ &  \\ 
$14$ & $87\,\allowbreak 178\,291\,200$ & $392\,\allowbreak 302\,310\,400$ & $%
305\,124\,019\,200$ & $36\,\allowbreak 324\,288\,000$ &  \\ 
$15$ & $1307\,\allowbreak 674\,368\,000$ & $\allowbreak 6538\,\allowbreak
371\,840\,000$ & $6102\,480\,384\,000$ & $1089\,\allowbreak 728\,640\,000$ & 
$\allowbreak 10\,\allowbreak 897\,286\,400$ \\ \hline
\end{tabular}%
\end{equation*}

The following result gives an explicit formula for the $s$-associated Lah
numbers according to identities (\ref{e}) and (\ref{ee}) for the $s$%
-associated Stirling numbers of both kinds.

\begin{theorem}
Let $s,k$ and $n$ be nonnegative integers such that $n\geqslant sk$, we have%
\begin{equation}
\lah {n}{k}^{(s)}=\frac{n!}{k!}\binom{n-(s-1)k-1}{k-1}.
\label{lahr}
\end{equation}
\end{theorem}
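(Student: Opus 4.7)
The plan is to argue directly from the combinatorial definition, mirroring the approach that produces the explicit formulas (\ref{e}) and (\ref{ee}) for the $s$-associated Stirling numbers. First I would fix an ordered tuple of list sizes $(i_1,\ldots,i_k)$ with $i_1+\cdots+i_k=n$ and $i_j\geq s$, then count how many partitions of $Z_n$ into ordered lists of exactly these sizes there are. Distributing the $n$ elements among $k$ labeled containers of the prescribed sizes can be done in $\binom{n}{i_1,i_2,\ldots,i_k}$ ways, then each container is internally ordered into a list in $i_j!$ ways, and finally we must divide by $k!$ because the $k$ lists are not labeled. Hence the contribution of each composition is
\begin{equation*}
\frac{1}{k!}\binom{n}{i_1,\ldots,i_k}\,i_1!i_2!\cdots i_k! \;=\; \frac{n!}{k!},
\end{equation*}
which is independent of the particular composition.

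Summing over all admissible compositions, I then obtain
\begin{equation*}
\lah{n}{k}^{(s)}=\frac{n!}{k!}\,N_{s}(n,k),
\end{equation*}
where $N_{s}(n,k)$ is the number of compositions of $n$ into $k$ parts each $\geq s$. The next step is the shift $j_\ell := i_\ell - s$, which turns the counting problem into that of compositions of $n-sk$ into $k$ nonnegative parts; by a standard stars-and-bars argument,
\begin{equation*}
N_{s}(n,k)=\binom{n-sk+k-1}{k-1}=\binom{n-(s-1)k-1}{k-1}.
\end{equation*}
Substituting yields exactly (\ref{lahr}).

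There is no real obstacle here; the only thing to watch is the bookkeeping in the shift of indices (the formula collapses nicely because $-sk+k-1 = -(s-1)k-1$), and the verification that the resulting expression vanishes when $n<sk$, which is automatic since the binomial coefficient is then zero. An alternative route would be induction on $n$ using the recurrence (\ref{srLah}), reducing the identity to the Pascal-type relation
\begin{equation*}
\binom{n-(s-1)k-1}{k-1}=\binom{n-(s-1)k-2}{k-1}+\binom{n-sk-1}{k-2}\cdot\tfrac{k}{n}\cdots,
\end{equation*}
but the algebra is heavier than the direct count, so I would keep the combinatorial proof as the main argument and only note the recurrence verification as a cross-check.
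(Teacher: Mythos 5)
Your argument is correct and is essentially the paper's own Proof 1: you fix the composition of list sizes, observe that each composition contributes $\binom{n}{i_1,\ldots,i_k}i_1!\cdots i_k!=n!$ labeled arrangements, divide by $k!$ to unlabel, and count the compositions with parts $\geqslant s$ by the shift $i_\ell\mapsto i_\ell-s$ and stars and bars, exactly as in the paper. (The paper additionally gives a second, different proof via choosing list heads and inserting the remaining elements, but your main argument needs nothing more.)
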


\begin{proof}[Proof 1]
We order $n$ elements on $k$ ordered lists such that, each list contains at
least $s$ elements: first, we suppose that the lists are labeled $1,\ldots
,k $ and for each list $j$ we choose $(i_{j}+s)$ $(0\leqslant i_{j}\leqslant
n-s)$ elements, we have $\binom{n}{i_{1}+s,i_{2}+s,\ldots ,i_{k}+s}$
possibilities to constitute the $k$ groups. The arrangement of the $j^{th}$
subset gives $(i_{j}+s)!$ possibilities. It gets $\sum_{i_{1}+i_{2}+\cdots
+i_{k}=n-sk}\binom{n}{i_{1}+s,i_{2}+s,\ldots ,i_{k}+s}(i_{1}+s)!(i_{2}+s)!%
\cdots (i_{k}+s)!=n!\binom{n-(s-1)k-1}{k-1},$ we divide by $k!$ to unlabeled
the lists.
\end{proof}

\begin{proof}[Proof 2]
First we choose $k$ elements to identify the $k$ lists with $\binom{n}{k}$
possibilities, then we choose $k$ groups of $s-1$ elements to retch the
condition of having $s$ elements by list and we have $\binom{n-k}{s-1}$
possibilities for the first list, and $\binom{n-k-\left( s-1\right) }{s-1}$
possibilities for the second one, and so on ... the last list have $\binom{%
n-k-\left( s-1\right) (k-1)}{s-1}$ possibilities. So we get $\binom{n-k}{s-1}%
\binom{n-k-\left( s-1\right) }{s-1}\cdots \binom{n-k-\left( s-1\right) (k-1)%
}{s-1}=\binom{n-k}{s-1,s-1,\ldots ,s-1,n-sk}$ possibilities. We affect the
remaining $n-sk$ elements to the lists and we have $k$ ways for the first
element, $k+1$ ways for the second one, and so on \ldots\ the last one have $%
k+n-sk-1=n-(s-1)k-1$. So, we get $k\left( k+1\right) \cdots \left(
n-(s-1)k-1\right) =\frac{\left( n-(s-1)k-1\right) !}{\left( k-1\right) !}$
possibilities$.$ The result follows.
\end{proof}

\begin{remark}
For $n=sk$, we get the following according to relations given by (\ref{koko}%
) 
\begin{equation}
\lah {sk}{k}^{(s)}=\frac{\left( sk\right) !}{k!}.
\label{kokoko}
\end{equation}
\end{remark}

Comparing to (\ref{srLah}), an other recurrence relation, with rational
coefficients, can be deduced form the explicit formula \ref{lahr}, as
follows.

\begin{theorem}
The $s$-associated Lah numbers satisfy the following recurrence relation%
\begin{equation}
\lah {n}{k}^{(s)}=\frac{n!}{\left( n-s\right) !k}%
\lah {n-s}{k-1}^{(s)}+n\lah
{n-1}{k}^{(s)}.
\end{equation}
\end{theorem}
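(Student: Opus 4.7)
The plan is to verify the identity algebraically by plugging the closed form (\ref{lahr}) into all three terms and reducing to Pascal's rule; a combinatorial argument conditioning on whether the element $n$ lies in a list of size exactly $s$ would also work, but substitution is the most direct route once (\ref{lahr}) is available.

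First I would rewrite the first summand. Using (\ref{lahr}),
\[
\frac{n!}{(n-s)!\,k}\lah{n-s}{k-1}^{(s)} = \frac{n!}{(n-s)!\,k}\cdot\frac{(n-s)!}{(k-1)!}\binom{n-s-(s-1)(k-1)-1}{k-2}.
\]
Cancelling $(n-s)!$ and absorbing $k\,(k-1)!$ into $k!$, and simplifying $n-s-(s-1)(k-1)-1 = n-(s-1)k-2$, gives $\frac{n!}{k!}\binom{n-(s-1)k-2}{k-2}$.

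Next I would treat the second summand similarly:
\[
n\lah{n-1}{k}^{(s)} = n\cdot\frac{(n-1)!}{k!}\binom{(n-1)-(s-1)k-1}{k-1} = \frac{n!}{k!}\binom{n-(s-1)k-2}{k-1}.
\]
Adding the two pieces factors out $\frac{n!}{k!}$, and Pascal's identity
\[
\binom{n-(s-1)k-2}{k-2}+\binom{n-(s-1)k-2}{k-1}=\binom{n-(s-1)k-1}{k-1}
\]
yields $\frac{n!}{k!}\binom{n-(s-1)k-1}{k-1}=\lah{n}{k}^{(s)}$, as required.

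The main obstacle is simply bookkeeping with the shifted upper indices, and in particular verifying the boundary $n=sk$ where the binomial coefficient $\binom{n-(s-1)k-2}{k-2}$ could degenerate (for $k=1$ the first summand vanishes because $\lah{n-s}{0}^{(s)}=\delta_{n-s,0}$ forces $n=s$, and both sides agree by (\ref{kokoko})). Away from this boundary the argument is a short substitution plus a single application of Pascal's rule.
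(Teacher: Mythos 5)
Your proposal is correct and follows exactly the paper's route: the paper's proof is the one-line remark "Using Pascal's formula and relation (\ref{lahr}), we get the result," and your substitution of the explicit formula into both terms followed by a single application of Pascal's identity is precisely that computation, carried out explicitly.
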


\begin{proof}
Using Pascal's formula and relation (\ref{lahr}), we get the result.
\end{proof}

Note that for $s=1$, we get the relation given by the authors \cite[eq 7]%
{BelBou14} when $r=0.$
.
The $s$-associated Lah numbers can be expressed as a Vandermonde type formula as follows.
\begin{figure}[!h]
\centering
\begin{tikzpicture}
	\draw (0,0)--(0,5)--(5,0);% le triangle 
	%resultat point:
	\draw[thin,loosely dotted] (0,0)--(2.5,0.0);% la droite des n
	\draw[thin,loosely dotted] (2.5,5)--(2.5,0.0);% la droite des k
	\draw[fill=black] (2.43,-0.07) rectangle(2.57,0.07);% rectangle des (n,k) au point 2.5,0.5
	\node  at (2.5,5.3) {$k$};% label k
	\node  at (-0.2,0.0) {$n$};% label n
	\draw[thick] (2.5,0.6)--(1,3.6);
	%resultat point:
	\draw[thin,loosely dotted] (0,0.6)--(2.5,0.6);% la droite des n
	\draw[thin,loosely dotted] (2.5,5)--(2.5,0.6);% la droite des k
	\draw[fill=white] (2.43,0.53) rectangle(2.57,0.67);% rectangle des (n,k) au point 2.5,0.5
	%\node  at (2.5,5.3) {$k$};% label k
	\node  at (-0.5,0.6) {$n-p$};% label n
	%\draw[thick,loosely dotted] (2.25,1.1)--(1,3.6);
	
	% deuxième poit:
	\draw[thin,loosely dotted] (0,1.1)--(2.25,1.1) ; % la droite n-p-(s-1)
	\draw[thin,loosely dotted] (2.25,5)--(2.25,1.1);% droite k-1
	\draw[fill=white] (2.18,1.03) rectangle(2.32,1.17);% rectangle des (n-p,k-1)
	%\node  at (2.5,5.3) {$k-1$};
	\node  at (-1.2,1.1) {$n-p-(s-1)$};% label n
	% Troisième poit
	\draw[thin,loosely dotted] (0,1.6)--(2.0,1.6) ; % la droite n-p-(s-1)
	\draw[thin,loosely dotted] (2.0,5)--(2.0,1.6);% droite k-1
	\draw[fill=white] (1.93,1.53) rectangle(2.07,1.67);% rectangle des (n-p,k-1)
	%\node  at (2.0,5.3) {$k-2$};
	\node  at (-1.3,2.1) {$n-p-3(s-1)$};% label n
	% quatrième poit:
	\draw[thin,loosely dotted] (0,2.1)--(1.75,2.1) ; % la droite n-p-(s-1)
	\draw[thin,loosely dotted] (1.75,5)--(1.75,2.1);% droite k-1
	\draw[fill=white] (1.68,2.03) rectangle(1.82,2.17);% rectangle des (n-p,k-1)
	%\node  at (1.75,5.3) {$k-3$};
	\node  at (-1.3,1.6) {$n-p-2(s-1)$};% label n
	% the last one
	\draw[thin,loosely dotted] (0,3.6)--(1,3.6) ; % la droite n-p-(s-1)
	\draw[thin,loosely dotted] (1,5)--(1,3.6);% droite k-1
	\draw[fill=white] (0.93,3.53) rectangle(1.07,3.67);% rectangle des (n-p,k-1)
	\node  at (1,5.3) {$k-p$};
	\node  at (-0.6,3.6) {$n-sp$};
\end{tikzpicture}
\caption{Value of $\lah{n}{k}^{(s)}$ (in black) as a inner product of a periodic sequence of elements of the same table (in white) with a sequence deriving from binomial coefficient.}
\label{Tux}
\end{figure}
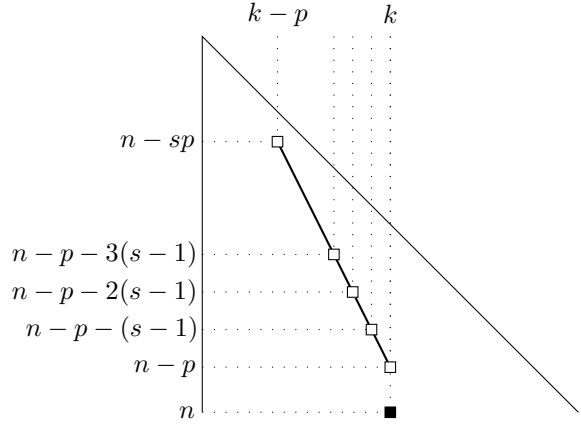
\begin{theorem}
The $s$-associated Lah numbers satisfy%
\begin{equation}
\lah {n}{k}^{(s)}=\frac{n!}{k!}\sum_{i=0}^{p}\frac{\left(
k-i\right) !}{\left( n-p-(s-1)i\right) !}\binom{p}{i}\lah
{n-(s-1)i-p}{k-i}^{(s)}.
\end{equation}
\end{theorem}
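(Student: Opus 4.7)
The plan is to reduce the identity to the classical Vandermonde convolution by applying the explicit formula (\ref{lahr}) from Theorem~3 to every $s$-associated Lah number that appears on the right-hand side.

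First, I would rewrite the inner Lah number using (\ref{lahr}):
$$\lah{n-(s-1)i-p}{k-i}^{(s)} = \frac{(n-(s-1)i-p)!}{(k-i)!}\binom{n-(s-1)k-p-1}{k-i-1}.$$
The crucial structural observation, obtained by expanding $(n-(s-1)i-p)-(s-1)(k-i)-1$, is that the top of the binomial coefficient simplifies to $n-(s-1)k-p-1$, which is \emph{independent of the summation index} $i$. This is exactly what will make Vandermonde's convolution applicable in the last step.

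Second, I would substitute this back into the right-hand side. The prefactor $(k-i)!/(n-p-(s-1)i)!$ cancels exactly against the factorials produced by (\ref{lahr}), so the right-hand side collapses to
$$\frac{n!}{k!}\sum_{i=0}^{p}\binom{p}{i}\binom{n-(s-1)k-p-1}{k-i-1}.$$

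Finally, I would invoke Vandermonde's identity $\sum_{i}\binom{p}{i}\binom{N}{k-1-i}=\binom{p+N}{k-1}$ with $N=n-(s-1)k-p-1$; the sum collapses to $\binom{n-(s-1)k-1}{k-1}$, which, by one more application of (\ref{lahr}), is exactly $\frac{k!}{n!}\lah{n}{k}^{(s)}$, finishing the proof.

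I do not expect any real obstacle: the argument is a single substitution, one cancellation, and one standard convolution. The only step requiring a little care, and the reason the statement is advertised as a \emph{Vandermonde type formula}, is verifying that the $(s-1)i$ contributions telescope so that the top of the binomial becomes independent of $i$; everything else is bookkeeping. A combinatorial argument mirroring Figure~1 would also be possible, classifying partitions according to how many of $p$ distinguished elements land in lists of exactly $s$ members, but the algebraic route via (\ref{lahr}) is the shortest.
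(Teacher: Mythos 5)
Your proposal is correct and takes essentially the same route as the paper, which also proves the identity by substituting the explicit formula (\ref{lahr}) into the right-hand side and applying the Vandermonde convolution. The only thing you add to the paper's one-line proof is the explicit check that the top entry $n-(s-1)k-p-1$ of the binomial coefficient is independent of $i$, which is precisely the bookkeeping the paper leaves implicit.
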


\begin{proof}
Using the explicit formula (\ref{lahr}) and the Vandermonde formula, we get
the result.
\end{proof}

The special case $s=1$ gives the identity given by the authors \cite[eq 6]%
{BelBou14} when $r=1$.

The $s$-associated Lah numbers satisfy the following vertical recurrence relation.
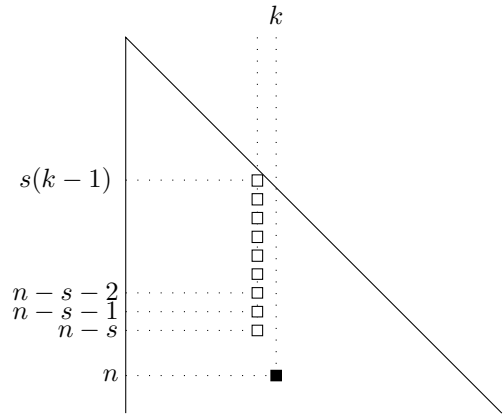
\begin{figure}[!h]
\centering
\begin{tikzpicture}
	\draw (0,0)--(0,5)--(5,0);% le triangle 
	%premier point:
	\draw[thin,loosely dotted] (0,0.5)--(2.0,0.5);% la droite des n
	\draw[thin,loosely dotted] (2.0,5)--(2.0,0.5);% la droite des k
	\draw[fill=black] (1.93,0.43) rectangle(2.07,0.57);% rectangle des (n,k) au point 2.5,0.5
	\node  at (2.0,5.3) {$k$};% label k
	\node  at (-0.2,0.5) {$n$};% label n
	% deuxième poit:
	\draw[thin,loosely dotted] (0,1.1)--(1.75,1.1) ; % la droite n-p-(s-1)
	\draw[thin,loosely dotted] (1.75,5)--(1.75,1.1);% droite k-1
	\draw[fill=white] (1.68,1.03) rectangle(1.82,1.17);% rectangle des (n-p,k-1)%\node  at (2.5,5.3) {$k-1$};
	\node  at (-0.5,1.1) {$n-s$};% label n
	% Troisième poit
	\draw[thin,loosely dotted] (0,1.35)--(1.75,1.35) ; % la droite n-p-(s-1)
	%\draw[thin,loosely dotted] (2.0,5)--(2.0,1.6);% droite k-1
	\draw[fill=white] (1.68,1.28) rectangle(1.82,1.42);% rectangle des (n-p,k-1)
	%\node  at (2.0,5.3) {$k-2$};
	\node  at (-0.8,1.35) {$n-s-1$};% label n
	% quatrième poit:
	\draw[thin,loosely dotted] (0,1.6)--(1.75,1.6) ; % la droite n-p-(s-1)
	%\draw[thin,loosely dotted] (1.75,5)--(1.75,2.1);% droite k-1
	\draw[fill=white] (1.68,1.53) rectangle(1.82,1.67);% rectangle des (n-p,k-1)
	%\node  at (1.75,5.3) {$k-3$};
	\node  at (-0.8,1.6) {$n-s-2$};% label n
	% les autres 
	\draw[fill=white] (1.68,1.78) rectangle(1.82,1.92);
	\draw[fill=white] (1.68,2.02) rectangle(1.82,2.17);
	\draw[fill=white] (1.68,2.27) rectangle(1.82,2.42);
	\draw[fill=white] (1.68,2.52) rectangle(1.82,2.67);
	\draw[fill=white] (1.68,2.77) rectangle(1.82,2.92);
		\draw[thin,loosely dotted] (0,3.1)--(1.75,3.1) ;
	\draw[fill=white] (1.68,3.02) rectangle(1.82,3.17);

	\node  at (-0.8,3.1) {$s(k-1)$};% label n
	
\end{tikzpicture}
\caption{linear vertical recurrence relation}
\label{Tu}
\end{figure}
\begin{theorem}
Let $s,k$ and $n$ be nonnegative integers such that $n\geqslant sk$, we have%
\begin{equation}
\lah {n}{k}^{(s)}=\sum_{i=s(k-1)}^{n-s}(n-i)!\binom{n-1}{i}%
\lah {i}{k-1}^{(s)}.  \label{RecVer}
\end{equation}
\end{theorem}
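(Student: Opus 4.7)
The plan is to give a combinatorial argument by conditioning on the list that contains a fixed distinguished element. I single out the element $n$ and let $L$ denote the (unique) ordered list of the partition that contains $n$. Writing $n-i$ for the size of $L$, the condition that every list has at least $s$ elements forces $|L|=n-i\geqslant s$ and simultaneously forces the $i$ elements outside $L$ to fill $k-1$ lists of size $\geqslant s$; this gives exactly $s(k-1)\leqslant i\leqslant n-s$, matching the summation bounds in (\ref{RecVer}).

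For a fixed value of $i$ in this range I would count as follows. First, choose the $i$ elements that do \emph{not} belong to $L$; since $n\in L$ by construction, these are drawn from $Z_{n-1}$, giving $\binom{n-1}{i}$ possibilities. Second, choose an ordering of the $n-i$ elements of $L$, which yields $(n-i)!$ possibilities. Third, partition the remaining $i$ elements into $k-1$ ordered lists each of size $\geqslant s$; by the definition of the $s$-associated Lah numbers this contributes $\lah{i}{k-1}^{(s)}$. Multiplying the three factors and summing over $i$ produces the right-hand side of (\ref{RecVer}), while the left-hand side counts exactly the same set of partitions, grouped by the size of $L$.

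The routine but delicate point is the bookkeeping on the summation range: one must check that the lower bound $s(k-1)$ is necessary (otherwise the remaining $k-1$ lists could not each reach $s$ elements, so $\lah{i}{k-1}^{(s)}=0$ anyway) and that the upper bound $n-s$ correctly expresses $|L|\geqslant s$. There is no overcounting because every partition has a unique list containing $n$, and this bijective decomposition clearly covers all partitions counted by $\lah{n}{k}^{(s)}$. As an alternative, one could grind the identity out algebraically from the explicit formula (\ref{lahr}) using a Vandermonde-type manipulation on $\binom{n-(s-1)k-1}{k-1}$, but the combinatorial proof is shorter and is consistent in style with Proofs 1 and 2 of Theorem (\ref{lahr}) above.
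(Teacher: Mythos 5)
Your proof is correct and follows essentially the same decomposition as the paper: choose the $i$ elements of the other $k-1$ lists ($\binom{n-1}{i}$ ways), form them into admissible lists ($\lah{i}{k-1}^{(s)}$ ways), order the remaining $n-i$ elements ($(n-i)!$ ways), and sum over $s(k-1)\leqslant i\leqslant n-s$. Your explicit anchoring on the distinguished element $n$ is in fact a slight improvement in rigor over the paper's phrase ``the $(k-1)$ first lists,'' since it justifies the binomial $\binom{n-1}{i}$ and rules out overcounting among unlabeled lists.
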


\begin{proof}
Let us consider the $\left( k-1\right) $ first lists, they contain $i$ $%
(s(k-1)\leqslant i\leqslant n-s)$ elements. So, we choose the $i$ elements
and we have $\binom{n-1}{i}$ ways to do it, and constitute the $k-1$ lists
such that each list have at least $s$ elements, which can be done by $%
\lah {i}{k-1}^{(s)}$ ways, then order the $(n-i)$
remaining elements in a list with $(n-i)!$ possibilities. We conclude by
summing over $i$.
\end{proof}

The exponential generating function of the $s$-associated Lah numbers is
given by the following. It is a complement list to (\ref{genes1}) and (\ref%
{genes2}).

\begin{theorem}
Let $n,k$ and $s$ be integers, we have%
\begin{equation}
\sum\limits_{n\geq sk}\lah {n}{k}^{(s)}\frac{x^{n}}{n!}=%
\frac{1}{k!}\left( \frac{x^{s}}{1-x}\right) ^{k}.  \label{generat}
\end{equation}
\end{theorem}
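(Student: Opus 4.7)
The plan is to derive the generating function directly from the explicit formula (\ref{lahr}), since that formula already reduces $\lah{n}{k}^{(s)}/n!$ to a single binomial coefficient and the remaining sum is a standard series.

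First, I would substitute $\lah{n}{k}^{(s)} = \frac{n!}{k!}\binom{n-(s-1)k-1}{k-1}$ into the left-hand side, which cancels the $n!$ in the denominator and leaves
\begin{equation*}
\sum_{n\geq sk}\lah{n}{k}^{(s)}\frac{x^{n}}{n!} = \frac{1}{k!}\sum_{n\geq sk}\binom{n-(s-1)k-1}{k-1}x^{n}.
\end{equation*}
Next, I would perform the index shift $m = n-sk$, so that $n-(s-1)k-1 = m+k-1$ and the lower bound becomes $m\geq 0$. Pulling out the factor $x^{sk}$ gives
\begin{equation*}
\frac{x^{sk}}{k!}\sum_{m\geq 0}\binom{m+k-1}{k-1}x^{m}.
\end{equation*}
The remaining sum is the classical negative binomial expansion $(1-x)^{-k}$, so the right-hand side becomes $\frac{x^{sk}}{k!(1-x)^{k}} = \frac{1}{k!}\bigl(\frac{x^{s}}{1-x}\bigr)^{k}$, which is exactly (\ref{generat}).

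There is essentially no obstacle here: the work is packaged inside the explicit formula (\ref{lahr}) and the standard identity for $(1-x)^{-k}$. If one preferred a purely combinatorial derivation, the exponential formula for labeled set partitions provides an alternative: the EGF counting a single ordered list of length $\geq s$ is $\sum_{n\geq s}n!\,\frac{x^{n}}{n!} = \frac{x^{s}}{1-x}$, and assembling $k$ such unordered lists introduces the factor $\frac{1}{k!}$, giving the same expression. I would mention this as a sanity check but present the algebraic derivation as the formal proof since it is shorter and self-contained.
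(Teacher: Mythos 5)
Your derivation is correct and is essentially the paper's own proof: the authors likewise substitute the explicit formula (\ref{lahr}) and invoke the negative binomial series $\sum_{n\geq 0}\binom{n+x}{x}t^{n}=(1-t)^{-(x+1)}$, which is exactly your index-shifted sum $\sum_{m\geq 0}\binom{m+k-1}{k-1}x^{m}=(1-x)^{-k}$. You merely spell out the shift $m=n-sk$ that the paper leaves implicit, so there is nothing to add.
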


\begin{proof}
Using the explicit formula (\ref{lahr}), with the following identity for $%
x\in 
%TCIMACRO{\U{2115} }%
%BeginExpansion
\mathbb{N}
%EndExpansion
$, see for instance \cite{MR0354401}, 
\begin{equation*}
\sum\limits_{n\geq 0}\binom{n+x}{x}t^{n}=\left( \frac{1}{1-t}\right) ^{x+1},
\end{equation*}%
we get the result.
\end{proof}

According to identity (\ref{ddddddddddddddd}), the double generating
function is given by

\begin{theorem}
We have%
\begin{equation}
\sum\limits_{n\geq sk}\sum\limits_{k=0}^{\left\lfloor n/s\right\rfloor
}\lah {n}{k}^{(s)}y^{k}\frac{x^{n}}{n!}=\exp \left\{ \frac{%
x^{s}}{1-x}y\right\} .
\end{equation}
\end{theorem}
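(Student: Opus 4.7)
The plan is to obtain the double generating function by simply layering a second sum, in the variable $y$, on top of the single-variable generating function (\ref{generat}) that has just been established. Since (\ref{generat}) gives a closed form for $\sum_{n\geq sk}\lah{n}{k}^{(s)} x^n/n!$ for each fixed $k$, I only need to multiply by $y^k$ and sum over all admissible $k$, then recognize the resulting series as an exponential.

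Concretely, I would first interchange the order of summation so that $k$ is the outer index. For each fixed $k\geq 0$, the inner sum $\sum_{n\geq sk}\lah{n}{k}^{(s)}x^n/n!$ equals $\tfrac{1}{k!}\bigl(x^s/(1-x)\bigr)^k$ by (\ref{generat}). Multiplying by $y^k$ then yields
\begin{equation*}
\sum_{k\geq 0}\sum_{n\geq sk}\lah{n}{k}^{(s)}y^k\frac{x^n}{n!}
=\sum_{k\geq 0}\frac{1}{k!}\left(\frac{x^s}{1-x}y\right)^k.
\end{equation*}
Recognizing the right-hand side as the Taylor expansion of $\exp$ immediately gives $\exp\!\left\{\frac{x^s}{1-x}y\right\}$, which is the claimed identity.

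The only real issue is a bookkeeping one: the statement in the theorem writes the summation as $\sum_{n\geq sk}\sum_{k=0}^{\lfloor n/s\rfloor}$, but since the condition $n\geq sk$ is equivalent to $k\leq \lfloor n/s\rfloor$, the double sum ranges over exactly the pairs $(n,k)$ with $0\leq k$ and $n\geq sk$, and this is the same index set I am using after interchanging the order of summation. So Fubini is automatic at the level of formal power series, and no convergence or reordering obstacle arises. The proof is essentially a one-line application of (\ref{generat}) followed by the power series of the exponential.
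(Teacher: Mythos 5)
Your proposal is correct and follows exactly the paper's own argument: interchange the order of summation, apply (\ref{generat}) for each fixed $k$, and sum the exponential series. The extra remark about the equivalence of $n\geq sk$ with $k\leq\lfloor n/s\rfloor$ is a harmless clarification of the same one-line proof.
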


\begin{proof}
Interchanging the order of summation and using equation (\ref{generat}), we
get the result.
\end{proof}

\section{Nested sums related to binomial coefficients}

In this section, we evaluate some symmetric functions. We start by the
following result.

\begin{lemma}
Let $\alpha $ and $\beta $ be integers such that\ $\beta \geqslant \alpha $.
We have%
\begin{equation}
\sum\limits_{n\geq 0}\binom{n+\alpha }{\beta }z^{n}=\frac{z^{\beta -\alpha }%
}{\left( 1-z\right) ^{\beta +1}}.  \label{y}
\end{equation}
\end{lemma}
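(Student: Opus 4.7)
The plan is to reduce this identity to the standard generating function $\sum_{n\geq 0}\binom{n+x}{x}z^n = (1-z)^{-(x+1)}$ (for a nonnegative integer $x$), which has already been cited in the proof of the exponential generating function for $\lah{n}{k}^{(s)}$. The main idea is a simple index shift combined with the observation that the low-order terms of the sum vanish.

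First I would observe that $\binom{n+\alpha}{\beta} = 0$ whenever $n+\alpha < \beta$, i.e.\ for $n < \beta-\alpha$. Since $\beta \geqslant \alpha$, the quantity $\beta-\alpha$ is a nonnegative integer, so the sum effectively starts at $n = \beta-\alpha$:
\begin{equation*}
\sum_{n\geq 0}\binom{n+\alpha}{\beta}z^{n} = \sum_{n\geq \beta-\alpha}\binom{n+\alpha}{\beta}z^{n}.
\end{equation*}
Then I would substitute $m = n-(\beta-\alpha)$, so that $n+\alpha = m+\beta$ and $z^n = z^{m+\beta-\alpha}$, yielding
\begin{equation*}
\sum_{n\geq 0}\binom{n+\alpha}{\beta}z^{n} = z^{\beta-\alpha}\sum_{m\geq 0}\binom{m+\beta}{\beta}z^{m}.
\end{equation*}

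Finally, applying the cited identity $\sum_{m\geq 0}\binom{m+\beta}{\beta}z^{m} = (1-z)^{-(\beta+1)}$ (valid since $\beta$ is a nonnegative integer, as guaranteed by $\beta \geqslant \alpha$ together with the convention that $\alpha$ is an integer making $\binom{n+\alpha}{\beta}$ meaningful) yields the claimed formula.

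There is no real obstacle here; the only subtle point is justifying that one is allowed to apply the standard identity, which requires $\beta \geqslant 0$. If $\alpha \geqslant 0$ this follows immediately from $\beta \geqslant \alpha$; if $\alpha$ could be negative one would need to note that otherwise $\binom{n+\alpha}{\beta}$ is identically zero for small $n$ and the identity still reduces to the same shifted form. The entire argument is essentially a single-line index shift, so I would keep the proof short and simply invoke the cited identity.
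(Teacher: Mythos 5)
Your argument is correct and is essentially the paper's own proof: the paper also factors out $z^{\beta-\alpha}$ via the index shift and then applies the standard expansion $\sum_{m\geq 0}\binom{m+\beta}{\beta}z^{m}=(1-z)^{-(\beta+1)}$. Your added remarks about the vanishing low-order terms and the nonnegativity of $\beta$ merely make explicit what the paper leaves implicit.
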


\begin{proof}
$\sum\limits_{n\geq 0}\binom{n+\alpha }{\beta }z^{n}=\left(
\sum\limits_{n\geq 0}\binom{n+\beta }{\beta }z^{n}\right) z^{\beta -\alpha }=%
\frac{z^{\beta -\alpha }}{\left( 1-z\right) ^{\beta +1}}.$
\end{proof}

The following result seems to be nice as an independent one.

\begin{theorem}
Let $\alpha _{1},\ldots ,\alpha _{r},\alpha ,\beta _{1},\ldots ,\beta
_{r},\beta ,k_{1},\ldots ,k_{r}$ and $k$ be integers such that $\alpha
_{1}+\cdots +\alpha _{r}=\alpha $, $\beta _{1}+\cdots +\beta _{r}=\beta $
and $k_{1}+\cdots +k_{r}=k$ with $\beta _{i}\geqslant \alpha _{i}$. The
following identity holds%
\begin{equation}
\sum\limits_{k_{1}+\cdots +k_{r}=k}\binom{k_{1}+\alpha _{1}}{\beta _{1}}%
\cdots \binom{k_{r}+\alpha _{r}}{\beta _{r}}=\binom{k+\alpha +r-1}{\beta +r-1%
}.  \label{yy}
\end{equation}
\end{theorem}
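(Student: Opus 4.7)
The plan is to prove the identity by a generating function argument, bootstrapping the single-factor identity from Lemma \ref{y}. I would first observe that the left-hand sum is nothing but the coefficient of $z^{k}$ in the Cauchy product of $r$ series of the form $\sum_{k_i\geq 0}\binom{k_i+\alpha_i}{\beta_i}z^{k_i}$. By Lemma \ref{y} (applied separately to each factor, using that $\beta_i\geq\alpha_i$), each such series equals $z^{\beta_i-\alpha_i}/(1-z)^{\beta_i+1}$. Multiplying these $r$ expressions gives
\begin{equation*}
\prod_{i=1}^{r}\frac{z^{\beta_i-\alpha_i}}{(1-z)^{\beta_i+1}}=\frac{z^{\beta-\alpha}}{(1-z)^{\beta+r}},
\end{equation*}
where I have used $\sum\beta_i=\beta$ and $\sum\alpha_i=\alpha$.

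Second, I would reinterpret the right-hand side as a single generating function, again via Lemma \ref{y}: taking the parameters $\alpha'=\alpha+r-1$ and $\beta'=\beta+r-1$ (which satisfies $\beta'\geq\alpha'$ since $\beta\geq\alpha$), the lemma gives
\begin{equation*}
\sum_{k\geq 0}\binom{k+\alpha+r-1}{\beta+r-1}z^{k}=\frac{z^{\beta-\alpha}}{(1-z)^{\beta+r}}.
\end{equation*}
Thus the two generating functions coincide, and extracting the coefficient of $z^{k}$ from each expression yields the claimed identity.

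The argument is quite short; the main point to be careful about is the bookkeeping of exponents when collapsing the product (checking that the $-r$ from stacking the $\beta_i-\alpha_i$ shifts cancels correctly with the $+r$ appearing from summing the $\beta_i+1$ in the denominator), and verifying that the hypothesis $\beta_i\geq\alpha_i$ is exactly what is needed to legitimize each application of Lemma \ref{y}. There is no real obstacle beyond this index-checking; the identity is essentially a direct Vandermonde-type convolution dressed up with shifted binomial coefficients, and the generating function proof is the cleanest way to avoid manipulating the sums directly.
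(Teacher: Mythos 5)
Your argument is correct and follows essentially the same route as the paper: both rest on the generating function of Lemma (\ref{y}) and identify the sum as a coefficient in a Cauchy product, the only difference being that you multiply all $r$ factors at once, whereas the paper reduces to the two-factor case and invokes induction on $r$. Your version is a slight streamlining of the same idea, and the exponent bookkeeping ($z^{\beta-\alpha}/(1-z)^{\beta+r}$ on both sides) checks out.
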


\begin{proof}
By induction over $r$, we get the result. So It suffices to do the proof for 
$r=2.$ Thus, we have to establish 
\begin{equation}
\sum\limits_{k_{1}+k_{2}=k}\binom{k_{1}+\alpha _{1}}{\beta _{1}}\binom{%
k_{2}+\alpha _{2}}{\beta _{2}}=\binom{k+\alpha +1}{\beta +1}.  \label{dododo}
\end{equation}

We consider the following product $\sum\limits_{n\geq
0}\sum\limits_{k_{1}+k_{1}=k}\binom{k_{1}+\alpha _{1}}{\beta _{1}}\binom{%
k_{2}+\alpha _{2}}{\beta _{2}}t^{n}=\left( \sum\limits_{n\geq 0}\binom{%
k_{1}+\alpha _{1}}{\beta _{1}}t^{n}\right) \left( \sum\limits_{n\geq 0}%
\binom{k_{2}+\alpha _{2}}{\beta _{2}}t^{n}\right) $

using (\ref{y}), we get $\frac{t^{\beta _{1}-\alpha _{1}}}{\left( 1-t\right)
^{\beta _{1}+1}}\frac{t^{\beta _{2}-\alpha _{1}}}{\left( 1-t\right) ^{\beta
_{2}+1}}=\sum\limits_{k}\binom{k+\alpha +1}{\beta +1}t^{k}.$
\end{proof}

As a consequence, we evaluate the sum of all possible integer products
having the same summation.

\begin{corollary}
For $\alpha _{i}=0$ and $\beta _{i}=1$ we get%
\begin{equation}
\sum\limits_{k_{1}+\cdots +k_{r}=n}k_{1}k_{2}\cdots k_{r}=\binom{n+r-1}{2r-1}%
.  \label{abc}
\end{equation}
\end{corollary}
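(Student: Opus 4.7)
The plan is to apply the preceding theorem (identity~(\ref{yy})) with the specific parameter choices $\alpha_i = 0$ and $\beta_i = 1$ for each $i = 1,\ldots,r$, after renaming the summation index from $k$ to $n$. Under this substitution, each factor $\binom{k_i+\alpha_i}{\beta_i}$ collapses to $\binom{k_i}{1}=k_i$, so the left-hand side of (\ref{yy}) becomes exactly $\sum_{k_1+\cdots+k_r=n} k_1 k_2 \cdots k_r$. For the right-hand side, the aggregated parameters are $\alpha = \sum_i \alpha_i = 0$ and $\beta = \sum_i \beta_i = r$, so $\binom{k+\alpha+r-1}{\beta+r-1}$ specializes to $\binom{n+r-1}{2r-1}$. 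The hypothesis $\beta_i \geq \alpha_i$ of the theorem reduces here to $1 \geq 0$, which holds trivially, so no side conditions need checking. There is therefore no real obstacle; the identity is a one-line specialization of the previous theorem.

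As an independent sanity check, one could re-derive the identity directly via the same generating-function argument used to prove (\ref{yy}). From (\ref{y}) with $\alpha=0, \beta=1$, the generating series of the sequence $k_i \mapsto k_i$ is $\sum_{k\geq 0} k\,z^k = z/(1-z)^2$. Raising this to the $r$-th power gives $z^r/(1-z)^{2r}$, and extracting the coefficient of $z^n$ yields $\binom{n-r+2r-1}{2r-1}=\binom{n+r-1}{2r-1}$, matching the claim. Either route is immediate, so the proof in the paper will almost certainly just cite (\ref{yy}).
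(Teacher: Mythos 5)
Your proposal is correct and matches the paper's route exactly: the corollary is obtained there as the direct specialization $\alpha_i=0$, $\beta_i=1$ of identity~(\ref{yy}), with no further argument needed (the paper only adds a combinatorial interpretation afterwards, not a different proof). Your generating-function sanity check is a valid, if redundant, confirmation of the same computation.
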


The above identity can be interpreted as the number of ways to choose $r$
leaders of $r$ groups constituted from $n$ persons: we choose one person of
each group and we have $\binom{k_{1}}{1}\cdots \binom{k_{r}}{1}$ ways to do
it. This is equivalent to choose $r$ persons and $\left( r-1\right) $
separators from the $n$ persons and the $r-1$ separators and we have $\binom{%
n+r-1}{2r-1}$ ways to do it.

Now, we are able to produce a general result. Also, it will be used to
establish the next theorem.

\begin{corollary}
Let $r,p$ and $k$ be integers such that $r\geqslant p$, we have%
\begin{equation}
\sum\limits_{k_{1}+\cdots +k_{p}+\cdots +k_{r}=n}k_{1}k_{2}\cdots k_{p}=%
\binom{n+r-1}{r+p-1}.  \label{dididid}
\end{equation}
\end{corollary}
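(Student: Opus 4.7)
The plan is to recognise this corollary as a direct specialisation of Theorem \ref{yy} by choosing the parameters so that the first $p$ factors reduce to $k_1, \ldots, k_p$ and the remaining $r-p$ factors reduce to $1$. The hard part is essentially nothing once the right parameter assignment is spotted; the step to get right is matching $\alpha$ and $\beta$ so that the closed form on the right collapses to $\binom{n+r-1}{r+p-1}$.

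Concretely, I would proceed as follows. First, in the identity
\[
\sum\limits_{k_{1}+\cdots +k_{r}=n}\binom{k_{1}+\alpha _{1}}{\beta _{1}}\cdots \binom{k_{r}+\alpha _{r}}{\beta _{r}}=\binom{n+\alpha +r-1}{\beta +r-1},
\]
set $\alpha_i = 0$ for every $i \in \{1,\ldots,r\}$, take $\beta_i = 1$ for $i \in \{1,\ldots,p\}$ and $\beta_i = 0$ for $i \in \{p+1,\ldots,r\}$. The hypothesis $\beta_i \geqslant \alpha_i$ is satisfied in both cases. Then the sums simplify to $\alpha = 0$ and $\beta = p$.

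Second, observe that under these choices $\binom{k_i+0}{1} = k_i$ for $i \leqslant p$ and $\binom{k_i+0}{0} = 1$ for $i > p$, so the summand on the left reduces to $k_1 k_2 \cdots k_p$. Substituting back gives
\[
\sum\limits_{k_{1}+\cdots +k_{r}=n} k_{1}k_{2}\cdots k_{p} = \binom{n+0+r-1}{p+r-1} = \binom{n+r-1}{r+p-1},
\]
which is precisely the claim. The special case $p = r$ recovers Corollary \ref{abc}, which provides a consistency check.
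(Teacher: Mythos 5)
Your proof is correct, and it takes a different (in fact more direct) route than the paper. You specialize Theorem (\ref{yy}) in one shot, choosing $\alpha_i=0$ for all $i$, $\beta_i=1$ for $i\leqslant p$ and $\beta_i=0$ for $i>p$; the hypothesis $\beta_i\geqslant\alpha_i$ holds, the summand collapses to $k_1k_2\cdots k_p$ because $\binom{k_i}{1}=k_i$ and $\binom{k_i}{0}=1$, and the right-hand side becomes $\binom{n+r-1}{p+r-1}$ immediately. The paper instead splits the summation variables into the first $p$ and the last $r-p$: it sums over the total $m$ carried by the first block, evaluates that block with the previously proved Corollary (\ref{abc}) as $\binom{m+p-1}{2p-1}$, counts the second block by stars and bars as $\binom{k-m+r-p-1}{r-p-1}$, and then contracts the resulting single sum with the two-factor Vandermonde identity (\ref{dododo}). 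Both arguments are valid; yours is shorter and exhibits the corollary as a pure specialization of the general theorem, while the paper's rederives it from the already-established special cases (\ref{abc}) and (\ref{dododo}) at the cost of an extra convolution step. Your consistency check that $p=r$ recovers (\ref{abc}) is a nice touch.
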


\begin{proof}
\begin{equation*}
\sum\limits_{k_{1}+\cdots +k_{p}+\cdots +k_{r}=k}k_{1}k_{2}\cdots
k_{p}=\sum\limits_{m=0}^{k}\left( \sum\limits_{k_{1}+\cdots
+k_{p}=m}k_{1}k_{2}\cdots k_{p}\right) \sum\limits_{k_{p+1}+\cdots
+k_{r}=k-m}1
\end{equation*}%
using identity (\ref{abc}) and $\sum\limits_{i_{1}+i_{2}+\cdots +i_{r}=m}1=%
\binom{m+r-1}{r-1}$ we get%
\begin{equation*}
\sum\limits_{k_{1}+\cdots +k_{p}+\cdots +k_{r}=k}k_{1}k_{2}\cdots
k_{p}=\sum\limits_{m=0}^{k}\binom{m+p-1}{2p-1}\binom{k-m+r-p-1}{r-p-1},
\end{equation*}%
applying relation (\ref{dododo}), we get the result.
\end{proof}

Now, we are able to \ evaluate the sum of all products of $k$ terms, all
translated by $\alpha $, and having a fixed summation.

\begin{theorem}
We have%
\begin{equation}
\sum\limits_{i_{1}+\cdots +i_{k}=n}(i_{1}+\alpha )(i_{2}+\alpha )\cdots
(i_{k}+\alpha )=\sum\limits_{i=0}^{k}\dbinom{k}{j}\dbinom{n+k-1}{n-j}\alpha
^{k-j}.  \label{Getit}
\end{equation}
\end{theorem}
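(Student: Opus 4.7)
The plan is to expand the product on the left and reduce everything to the previous corollary. In each factor $i_\ell+\alpha$ we choose either $i_\ell$ or $\alpha$; indexing the choices by the subset $S\subseteq\{1,\ldots,k\}$ on which we select $i_\ell$, we get
\[
(i_1+\alpha)(i_2+\alpha)\cdots(i_k+\alpha)
=\sum_{S\subseteq\{1,\ldots,k\}}\alpha^{k-|S|}\prod_{\ell\in S}i_\ell
=\sum_{j=0}^{k}\alpha^{k-j}\sum_{\substack{S\subseteq\{1,\ldots,k\}\\|S|=j}}\prod_{\ell\in S}i_\ell.
\]
I would substitute this into the left-hand side and interchange the two summations, so the quantity to evaluate becomes
\[
\sum_{j=0}^{k}\alpha^{k-j}\sum_{\substack{S\subseteq\{1,\ldots,k\}\\|S|=j}}\;\sum_{i_{1}+\cdots+i_{k}=n}\prod_{\ell\in S}i_\ell.
\]

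Next I would use the symmetry of the inner sum: for every $S$ with $|S|=j$, relabeling the composition shows that $\sum_{i_1+\cdots+i_k=n}\prod_{\ell\in S}i_\ell$ equals $\sum_{i_1+\cdots+i_k=n}i_1i_2\cdots i_j$, which by the Corollary following (\ref{dididid}) (taking $r=k$ and $p=j$) evaluates to $\binom{n+k-1}{k+j-1}$. Using the standard symmetry $\binom{n+k-1}{k+j-1}=\binom{n+k-1}{n-j}$ and the fact that the number of subsets $S$ of size $j$ is $\binom{k}{j}$, the whole expression collapses to
\[
\sum_{j=0}^{k}\binom{k}{j}\binom{n+k-1}{n-j}\alpha^{k-j},
\]
which is exactly the right-hand side.

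There is no real obstacle here; the only point that requires a little care is the rewriting of the binomial from the form $\binom{n+k-1}{k+j-1}$ delivered by identity (\ref{dididid}) into the form $\binom{n+k-1}{n-j}$ in which the theorem is stated. Everything else is just the binomial expansion of the product combined with symmetry of the composition sum, so the previous corollary does all the work.
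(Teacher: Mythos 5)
Your proof is correct and follows essentially the same route as the paper: expand the product $(i_1+\alpha)\cdots(i_k+\alpha)$, use the symmetry of the composition sum to reduce each term to $\sum_{i_1+\cdots+i_k=n}i_1\cdots i_j$, and evaluate it via the corollary giving (\ref{dididid}), with the final binomial rewritten as $\binom{n+k-1}{n-j}$. The paper merely compresses this into the shorthand $I_{k,j}$, so your write-up is just a more explicit version of the same argument.
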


\begin{proof}
We have 
\begin{equation*}
\sum\limits_{i_{1}+\cdots +i_{k}=n}(i_{1}+\alpha )\cdots (i_{k}+\alpha
)=\sum\limits_{j=0}^{k}\dbinom{k}{j}I_{k,k-j}\alpha ^{j},
\end{equation*}%
where $I_{k,j}:=\sum\limits_{i_{1}+i_{2}+\cdots +i_{k}=n}i_{1}i_{2}\cdots
i_{j}$ and from (\ref{dididid}), we get the result.
\end{proof}

This nice result will be used to evaluate the explicit formula of the $s$%
-associated $r$-Stirling numbers which are introduced in the following
section.

\section{The $s$-associated $r$-Stirling numbers of the both kinds and the $s
$-associated $r$-Lah numbers}

Now, we introduce the $s$-associated $r$-Stirling numbers of the both kinds
and the $s$-associated $r$-Lah numbers.

\begin{definition}
The $s$-associated $r$-Stirling numbers of the first kind count the number
of permutations of the set $Z_{n}$ with $k$ cycles such that the $r$ first
elements are in distinct cycles and each cycle contains at least $s$
elements.

The $s$-associated $r$-Stirling numbers of the second kind count the number
of partitions of the set $Z_{n}$ into $k$ subsets such that the $r$ first
elements are in distinct subsets and each subset contains at least $s$
elements.

The $s$-associated $r$-Lah numbers, called also the $s$-associated $r$%
-Stirling numbers of the third kind, count the number of partitions of the
set $Z_{n}$ into $k$ ordered lists such that the $r$ first elements are in
distinct lists and each list contains at least $s$ elements.
\end{definition}

Here is given, for each kind, the table for $r=s=2$.

\begin{center}
\begin{equation*}
\begin{tabular}{c|cccccc}
\hline
$n\backslash k$ & $2$ & $3$ & $4$ & $5$ & $6$ & $7$ \\ \hline
$4$ & $2$ &  &  &  &  &  \\ 
$5$ & $12$ &  &  &  &  &  \\ 
$6$ & $72$ & $12$ &  &  &  &  \\ 
$7$ & $480$ & $160$ &  &  &  &  \\ 
$8$ & $3600$ & $1740$ & $90$ &  &  &  \\ 
$9$ & $30\,240$ & $18\,648$ & $2100$ &  &  &  \\ 
$10$ & $282\,240$ & $207\,648$ & $35\,840$ & $840$ &  &  \\ 
$11$ & $2903\,040$ & $2446\,848$ & $560\,448$ & $30\,240$ &  &  \\ 
$12$ & $32\,659\,200$ & $30\,702\,240$ & $8641\,080$ & $743\,400$ & $9450$ & 
\\ 
$13$ & $399\,168\,000$ & $410\,731\,200$ & $135\,519\,120$ & $15\,935\,920$
& $485\,100$ &  \\ 
$14$ & $5269\,017\,600$ & $5852\,753\,280$ & $2194\,121\,952$ & $%
324\,416\,400$ & $16\,216\,200$ & $124\,740$ \\ 
$15$ & $74\,724\,249\,600$ & $88\,663\,610\,880$ & $36\,941\,553\,792$ & $%
6522\,721\,920$ & $455\,975\,520$ & $8648\,640$ \\ 
$16$ & $1133\,317\,785\,600$ & $1424\,644\,865\,280$ & $649\,046\,990\,592$
& $132\,205\,465\,392$ & $11\,\allowbreak 835\,944\,120$ & $377\,116\,740$
\\ \hline
\end{tabular}%
\end{equation*}%
$\allowbreak \allowbreak $Table 1: Some values for the $2$-associated $2$%
-Stirling numbers of the first kind

\begin{equation*}
\begin{tabular}{c|ccccccc}
\hline
$n\backslash k$ & $2$ & $3$ & $4$ & $5$ & $6$ & $7$ & $8$ \\ \hline
$4$ & $2$ &  &  &  &  &  &  \\ 
$5$ & $6$ &  &  &  &  &  &  \\ 
$6$ & $14$ & $12$ &  &  &  &  &  \\ 
$7$ & $30$ & $80$ &  &  &  &  &  \\ 
$8$ & $62$ & $360$ & $90$ &  &  &  &  \\ 
$9$ & $126$ & $1372$ & $1050$ &  &  &  &  \\ 
$10$ & $254$ & $4788$ & $7700$ & $840$ &  &  &  \\ 
$11$ & $510$ & $15\,864$ & $45\,612$ & $15\,120$ &  &  &  \\ 
$12$ & $1022$ & $50\,880$ & $239\,190$ & $163\,800$ & $9450$ &  &  \\ 
$13$ & $2046$ & $159\,764$ & $1161\,270$ & $1389\,080$ & $242\,550$ &  &  \\ 
$14$ & $4094$ & $494\,604$ & $5353\,392$ & $10\,182\,480$ & $3638\,250$ & $%
124\,740$ &  \\ 
$15$ & $8190$ & $1516\,528$ & $23\,800\,920$ & $67\,822\,040$ & $%
41\,771\,730 $ & $4324\,320$ &  \\ 
$16$ & $16\,382$ & $4619\,160$ & $103\,096\,994$ & $422\,534\,112$ & $%
407\,246\,840$ & $85\,765\,680$ & $1891\,890$ \\ 
$17$ & $32\,766$ & $14\,004\,876$ & $438\,124\,050$ & $2507\,785\,280$ & $%
3555\,852\,300$ & $1280\,178\,900$ & $85\,135\,050$ \\ \hline
\end{tabular}%
\end{equation*}%
Table 2: Some values of the $2$-associated $2$-Stirling numbers of the
second kind

\begin{equation*}
\begin{tabular}{c|cccccc}
\hline
$n\backslash k$ & $2$ & $3$ & $4$ & $5$ & $6$ & $7$ \\ \hline
$4$ & $8$ &  &  &  &  &  \\ 
$5$ & $72$ &  &  &  &  &  \\ 
$6$ & $600$ & $\allowbreak 96$ &  &  &  &  \\ 
$7$ & $5280$ & $1920$ &  &  &  &  \\ 
$8$ & $50\,400$ & $29\,520$ & $\allowbreak 1440$ &  &  &  \\ 
$9$ & $524\,160$ & $428\,400$ & $50\,400$ &  &  &  \\ 
$10$ & $5927\,040$ & $6249\,600$ & $1229\,760$ & $\allowbreak 26\,880$ &  & 
\\ 
$11$ & $72\,576\,000$ & $93\,985\,920$ & $26\,490\,240$ & $1451\,520$ &  & 
\\ 
$12$ & $958\,003\,200$ & $1473\,292\,800$ & $546\,134\,400$ & $51\,408\,000$
& $\allowbreak 604\,800$ &  \\ 
$13$ & $13\,\allowbreak 571\,712\,000$ & $24\,\allowbreak 189\,580\,800$ & $%
11\,\allowbreak 176\,704\,000$ & $1536\,796\,800$ & $46\,569\,600$ &  \\ 
$14$ & $205\,\allowbreak 491\,686\,400$ & $416\,\allowbreak 731\,392\,000$ & 
$231\,\allowbreak 357\,772\,800$ & $42\,\allowbreak 471\,475\,200$ & $%
2255\,299\,200$ & $\allowbreak 15\,966\,720$ \\ 
$15$ & $3312\,\allowbreak 775\,065\,600$ & $7534\,\allowbreak 695\,168\,000$
& $4894\,\allowbreak 438\,348\,800$ & $1133\,\allowbreak 317\,785\,600$ & $%
89\,\allowbreak 253\,964\,800$ & $1660\,538\,880$ \\ \hline
\end{tabular}%
\end{equation*}%
Table 3: Some values for the $2$-associated $2$-Lah numbers
\end{center}

The $s$-associated $r$-Stirling numbers of the three kinds have the
following explicit formulas.

\begin{theorem}
For $n\geqslant sk$ and $k\geqslant r$, we have%
\begin{eqnarray}
\stirlingf {n}{k}_{r}^{(s)} &=&\frac{(n-r)!}{(k-r)!}\sum\limits_{m=0}^{n-sk}%
\binom{m+r-1}{r-1}\sum\limits_{i_{1}+\cdots +i_{k-r}=n-sk-m}\frac{1}{%
(i_{r+1}+s)\cdots (i_{k}+s)},  \label{EX1} \\
\stirlings {n}{k}_{r}^{(s)} &=&\frac{(n-r)!}{(k-r)!}\sum\limits_{\substack{
i_{1}+\cdots +i_{k}=n-sk}}\frac{1}{(i_{1}+s-1)!\cdots
(i_{r}+s-1)!(i_{r+1}+s)!\cdots (i_{k}+s)!},  \label{EXS2} \\
\lah {n}{k}_{r}^{(s)} &=&\frac{(n-r)!}{(k-r)!}%
\sum\limits_{j=0}^{r}\dbinom{r}{j}\binom{n-\left( s-1\right) k-1}{k+j-1}%
s^{r-j}.  \label{EXL}
\end{eqnarray}
\end{theorem}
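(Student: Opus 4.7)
The plan is to treat the three formulas in parallel by enumerating each family of objects according to the multiset of group sizes. Label the $k$ groups so that group $j$ is the seeded one containing the $j$th anchor element for $j\leq r$, and groups $r+1,\ldots,k$ are unseeded. Write the size of group $j$ as $s+i_j$ with $i_j\geq 0$, so that the threshold condition $n\geq sk$ becomes $i_1+\cdots+i_k=n-sk$. The seeded groups are distinguishable by their anchor, whereas the $k-r$ unseeded groups are interchangeable, which accounts for the factor $1/(k-r)!$ common to all three formulas. Distributing the $n-r$ non-anchor elements into slots of the prescribed sizes yields the multinomial coefficient
\[
\frac{(n-r)!}{\prod_{j=1}^{r}(s-1+i_j)!\,\prod_{j=r+1}^{k}(s+i_j)!}.
\]

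For the first kind, each cycle of length $s+i_j$ admits $(s+i_j-1)!$ cyclic orderings. Multiplying these factors into the multinomial cancels the denominator exactly in each seeded slot (since $(s+i_j-1)!=(s-1+i_j)!$) and leaves a residual $1/(s+i_j)$ in each unseeded slot. Grouping the sum by $m=i_1+\cdots+i_r$ and using $\sum_{i_1+\cdots+i_r=m}1=\binom{m+r-1}{r-1}$ then produces (\ref{EX1}). For the second kind, subsets carry no internal structure, so the bare multinomial, divided by $(k-r)!$ and summed over $i_1+\cdots+i_k=n-sk$, gives (\ref{EXS2}) directly.

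For the third kind, each list of length $s+i_j$ contributes $(s+i_j)!$ orderings; these fully cancel the denominators in the unseeded slots and leave a factor $(s+i_j)$ in each seeded slot, producing
\[
\lah{n}{k}_r^{(s)}=\frac{(n-r)!}{(k-r)!}\sum_{i_1+\cdots+i_k=n-sk}(i_1+s)(i_2+s)\cdots(i_r+s).
\]

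The main obstacle is evaluating this constrained symmetric sum in closed form. I would expand $\prod_{j=1}^{r}(i_j+s)=\sum_{j=0}^{r}s^{r-j}e_j(i_1,\ldots,i_r)$, where $e_j$ denotes the $j$th elementary symmetric polynomial. Since the constraint $i_1+\cdots+i_k=n-sk$ is symmetric in all $k$ variables, each of the $\binom{r}{j}$ monomials of $e_j$ contributes the same amount when summed, so $\sum_{\mathrm{comp}}e_j=\binom{r}{j}\sum_{\mathrm{comp}}i_1i_2\cdots i_j$. Corollary~(\ref{dididid}), applied with the substitutions $r\mapsto k$, $p\mapsto j$, $n\mapsto n-sk$, evaluates that monomial sum to $\binom{n-sk+k-1}{k+j-1}=\binom{n-(s-1)k-1}{k+j-1}$, and substitution gives (\ref{EXL}).
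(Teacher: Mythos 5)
Your argument is correct, and its combinatorial core coincides with the paper's: you decompose by block sizes, distribute the $n-r$ non-anchor elements via the multinomial coefficient, divide by $(k-r)!$ for the unseeded blocks, and insert the arrangement factors $(s+i_j-1)!$ (cycles), $1$ (subsets), $(s+i_j)!$ (lists), which yields (\ref{EX1}), (\ref{EXS2}), and the intermediate identity (\ref{32}) exactly as in the paper, up to your upfront shift $i_j\mapsto i_j+s$. The only place you diverge is the closed-form evaluation of $\sum_{i_1+\cdots+i_k=n-sk}(i_1+s)\cdots(i_r+s)$: you expand the product into elementary symmetric polynomials, use symmetry of the constraint to reduce each $e_j$ to $\binom{r}{j}$ copies of a single monomial sum, and apply Corollary (\ref{dididid}) with all $k$ variables at once, while the paper conditions on $m=i_1+\cdots+i_r$, applies Theorem (\ref{Getit}) to the $r$ seeded variables, and then needs the Vandermonde-type convolution (\ref{dododo}) to collapse the sum over $m$. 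Your route is marginally more direct — in effect you re-prove (\ref{Getit}) in the form needed when there are fewer factors than summation variables, so the extra convolution step disappears — but both evaluations rest on the same Section~3 nested-sum machinery, so the gain is economy of steps rather than a new idea.
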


\begin{proof}
We first proof the identity (\ref{EXS2}). To constitute a partition of $Z_{n}
$ into $k$ parts such that each part has at least $s$ elements and the $r$
first elements are in distinct parts, we proceed as follows : we put the $r$
first elements in $r$ parts (one by part). Now we partition the $n-r$
remaining elements into $k$ parts such that $r$ parts have at least $s-1$
elements and $k-r$ parts have at least $s$ elements, and we have $\frac{1}{%
(k-r)!}\binom{n-r}{i_{1},i_{2},\ldots ,i_{k}}$ ways to do it, with $%
i_{j}\geqslant s-1$ for $j=1,\ldots ,r$ and $i_{j}\geqslant s$ for $%
j=r+1,\ldots ,k$, which gives identity (\ref{EXS2}).

With the same specifications used to establish relation (\ref{EXS2}), to
count the number of permutations of $Z_{n}$ into $k$ cycles it suffice to
constitute the cycles by considering all the possible arrangement in the
parts and we have $i_{1}!i_{2}!\cdots i_{r}!(i_{r+1}-1)!\cdots (i_{k}-1)!$
ways. So we can write: 
\begin{eqnarray*}
\stirlingf {n}{k}_{r}^{(s)} &=&\frac{1}{(k-r)!}\sum\limits_{\substack{ %
i_{1}+\cdots +i_{k}=n-r}}\binom{n-r}{i_{1},\ldots ,i_{k}}i_{1}!\cdots
i_{r}!(i_{r+1}-1)!\cdots (i_{k}-1)!, \\
&=&\frac{(n-r)!}{(k-r)!}\sum\limits_{m=r(s-1)}^{n-r-s(k-r)}\sum\limits 
_{\substack{ i_{r+1}+\cdots +i_{k}=n-r-m  \\ i_{j}\geqslant s}}\frac{1}{%
i_{r+1}\cdots i_{k}}\sum\limits_{\substack{ i_{1}+\cdots +i_{r}=m  \\ %
i_{j}\geqslant s-1}}1, \\
&=&\frac{(n-r)!}{(k-r)!}\sum\limits_{m=0}^{n-sk}\sum\limits_{i_{r+1}+\cdots
+i_{k}=n-sk-m}\frac{1}{(i_{r+1}+s)\cdots (i_{k}+s)}\sum\limits_{i_{1}+\cdots
+i_{r}=m}1,
\end{eqnarray*}%
finally using $\sum\limits_{i_{1}+\cdots +i_{r}=m}1=\binom{m+r-1}{r-1}$, we
get identity (\ref{EX1}).

The same approach works, to constitute partitions of $Z_{n}$ into $k$
ordered lists we have to consider the arrangement in the parts and we have $%
\left( i_{1}+1\right) !\left( i_{2}+1\right) !\cdots \left( i_{r}+1\right)
!i_{r+1}!\cdots i_{k}!$ ways to do it. Thus we get%
\begin{eqnarray}
\lah {n}{k}_{r}^{(s)} &=&\frac{1}{(k-r)!}\sum\limits 
_{\substack{ i_{1}+\cdots +i_{k}=n-r}}\binom{n-r}{i_{1},\ldots ,i_{k}}\left(
i_{1}+1\right) !\cdots \left( i_{r}+1\right) !i_{r+1}!\cdots i_{k}!
\label{ih} \\
&=&\frac{(n-r)!}{(k-r)!}\sum\limits_{m=s(k-r)}^{n-sr}\sum\limits_{\substack{ %
i_{1}+\cdots +i_{r}=n-r-m  \\ i_{j}\geqslant s-1}}\left( i_{1}+1\right)
\cdots \left( i_{r}+1\right) \sum\limits_{\substack{ i_{r+1}+\cdots +i_{k}=m 
\\ i_{j}\geqslant s}}1  \notag \\
&=&\frac{(n-r)!}{(k-r)!}\sum\limits_{m=0}^{n-sk}\sum\limits_{\substack{ %
i_{1}+\cdots +i_{r}=n-sk-m  \\ i_{j}\geqslant 0}}\left( i_{1}+s\right)
\cdots \left( i_{r}+s\right) \sum\limits_{\substack{ i_{r+1}+\cdots +i_{k}=m 
\\ i_{j}\geqslant 0}}1,  \notag
\end{eqnarray}%
using relation (\ref{Getit}), we get%
\begin{equation*}
\lah {n}{k}_{r}^{(s)}=\frac{(n-r)!}{(k-r)!}%
\sum\limits_{j=0}^{r}\dbinom{r}{j}\sum\limits_{m=0}^{n-sk}\binom{m+k-r-1}{%
k-r-1}\dbinom{n-sk-m+r-1}{r-1+j}s^{r-j},
\end{equation*}%
using relation (\ref{dododo}), we get identity (\ref{EXL}).
\end{proof}

From (\ref{ih}), we can write a second kind explicit formula according to (%
\ref{Getit}) and generalizing relation (\ref{lahr}).%
\begin{equation}
\lah {n}{k}_{r}^{(s)}=\frac{(n-r)!}{(k-r)!}\sum\limits 
_{\substack{ i_{1}+i_{2}+\cdots +i_{k}=n-sk}}\left( i_{1}+s\right) \left(
i_{2}+s\right) \cdots \left( i_{r}+s\right) .  \label{32}
\end{equation}

The precedent theorem works for $k=r$. Furthermore, the identities are more
explicit.

\begin{remark}
For $k=r$, we get respectively%
\begin{eqnarray}
\stirlingf {n}{r}_{r}^{(s)} &=&\left( n-r\right) !\dbinom{n-r\left(
s-1\right) -1}{r-1},  \label{diga} \\
\stirlings {n}{r}_{r}^{(s)} &=&\sum\limits_{\substack{ i_{1}+i_{2}+\cdots
+i_{r}=n-r \\ i_{l}\geqslant s-1}}\binom{n-r}{i_{1},i_{2},\ldots ,i_{r}}, \\
\lah {n}{r}_{r}^{(s)} &=&\left( n-r\right)
!\sum\limits_{i=0}^{r}\dbinom{r}{i}\dbinom{n-(s-1)r-1}{r+i-1}s^{r-i}.
\end{eqnarray}
\end{remark}

The following special values can be easily computed, extending relations
given by relations (\ref{koko}) and (\ref{kokoko})%
\begin{eqnarray}
\stirlingf {sk}{k}_{r}^{(s)} &=&\frac{(n-r)!}{(k-r)!s^{k}}, \\
\stirlings {sk}{k}_{r}^{(s)} &=&\frac{(n-r)!}{(k-r)!(s-1)!^{r}s!^{k-r}}, \\
\lah {sk}{k}_{r}^{(s)} &=&\frac{(n-r)!}{(k-r)!s^{r}}.
\end{eqnarray}

Here is given an other explicit formula of the $s$-associated $r$-Lah
numbers. This one is more interesting than relation (\ref{32}). It is
evaluated using one summation

\begin{theorem}
Let $n,k,r$ and $s$ be nonnegative integers such that $k\geqslant r$ and $%
n\geqslant sk$, we have%
\begin{equation}
\lah {n}{k}_{r}^{(s)}=\frac{(n-r)!}{(k-r)!}%
\sum\limits_{j=0}^{r}\dbinom{r}{j}\dbinom{n+j-(s-1)k-1}{k+j-1}(s-1)^{r-j}.
\label{exp2}
\end{equation}
\end{theorem}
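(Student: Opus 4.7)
The plan is to start from the multivariate expression (\ref{32}) and collapse the inner sum to a single binomial coefficient by means of the identity (\ref{dididid}). The key trick is to split each factor $(i_l+s)$ into the sum $(i_l+1)+(s-1)$ so that the $(s-1)$ part comes out as a numerical coefficient and the $(i_l+1)$ part absorbs cleanly into a shifted composition.

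First, starting from
\[
\lah {n}{k}_{r}^{(s)} = \frac{(n-r)!}{(k-r)!} \sum_{i_{1}+\cdots+i_{k}=n-sk} (i_{1}+s)(i_{2}+s)\cdots(i_{r}+s),
\]
I would expand
\[
\prod_{l=1}^{r}(i_{l}+s) = \prod_{l=1}^{r}\bigl((i_{l}+1)+(s-1)\bigr) = \sum_{S\subseteq\{1,\ldots,r\}} (s-1)^{r-|S|} \prod_{l\in S}(i_{l}+1).
\]
Because the sum over $(i_{1},\ldots,i_{k})$ is symmetric in its first $r$ variables (the composition constraint $i_{1}+\cdots+i_{k}=n-sk$ is invariant under permutation), every subset $S$ of size $j$ contributes the same amount. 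Grouping by $|S|=j$ yields
\[
\lah {n}{k}_{r}^{(s)} = \frac{(n-r)!}{(k-r)!} \sum_{j=0}^{r}\binom{r}{j}(s-1)^{r-j} \sum_{i_{1}+\cdots+i_{k}=n-sk}\prod_{l=1}^{j}(i_{l}+1).
\]

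Next I would perform the index shift $m_{l}=i_{l}+1$ for $l\leq j$ and $m_{l}=i_{l}$ otherwise. This turns the constraint into $m_{1}+\cdots+m_{k}=n-sk+j$ with $m_{l}\geq 1$ for $l\leq j$; but any term with $m_{l}=0$ for some $l\leq j$ contributes zero to $m_{1}m_{2}\cdots m_{j}$, so the constraint $m_{l}\geq 1$ can be dropped without changing the sum. Thus
\[
\sum_{i_{1}+\cdots+i_{k}=n-sk}\prod_{l=1}^{j}(i_{l}+1) = \sum_{m_{1}+\cdots+m_{k}=n-sk+j} m_{1}m_{2}\cdots m_{j}.
\]
Applying (\ref{dididid}) with the parameters $r\leftarrow k$, $p\leftarrow j$, and $n\leftarrow n-sk+j$, the right-hand side equals $\binom{n-sk+j+k-1}{k+j-1} = \binom{n+j-(s-1)k-1}{k+j-1}$, which is precisely the binomial factor appearing in (\ref{exp2}). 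Substituting back finishes the proof.

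The main obstacles are bookkeeping rather than conceptual: carefully justifying that the $m_{l}\geq 1$ restriction may be dropped, and ensuring that the parameters $(n,r,p)$ in (\ref{dididid}) are matched correctly after the shift. An alternative, slightly shorter route would be to derive (\ref{exp2}) directly from (\ref{EXL}) by writing $s^{r-j}=\bigl((s-1)+1\bigr)^{r-j}$, swapping the resulting double sum, and applying Vandermonde's identity in the form $\sum_{j}\binom{j'}{j}\binom{N}{K+j}=\binom{N+j'}{K+j'}$; however, the route via (\ref{32}) has the advantage of using only tools already developed in Section~3 of the paper.
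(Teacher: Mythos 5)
Your proof is correct, and it takes a genuinely different route from the paper's. The paper proves (\ref{exp2}) by a direct counting construction: identify the lists by the $r$ distinguished elements and $k-r$ chosen heads, attach to each list a block of $s-1$ elements ordered in $(s-1)!$ ways, decide which $j$ of the $r$ distinguished elements act as head lists (the factor $\binom{r}{j}(s-1)^{r-j}$), and insert the remaining $n-sk$ elements one at a time, producing the rising product $(k+j)(k+j+1)\cdots(n+j-(s-1)k-1)$. You instead start from the already-available multivariate formula (\ref{32}), split each factor as $(i_l+s)=(i_l+1)+(s-1)$, use the symmetry of the composition constraint to collect subsets by size, shift indices (correctly noting that the terms killed by $m_l=0$ let you drop the $m_l\geqslant 1$ restriction), and finish with the nested-sum corollary (\ref{dididid}); the parameter matching $\binom{n-sk+j+k-1}{k+j-1}=\binom{n+j-(s-1)k-1}{k+j-1}$ is right, and the case $j=0$ degenerates harmlessly to stars and bars. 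The trade-off: the paper's proof is self-contained combinatorics, independent of (\ref{ih}) and (\ref{EXL}), which is why the two propositions after the theorem carry content as a comparison of genuinely different computations, whereas your derivation is shorter and purely formal but inherits its validity from (\ref{32}) (hence from (\ref{ih})) and in effect proves the paper's second proposition directly, with (\ref{cccc}) then following by comparison with (\ref{EXL}). Your closing remark is also sound: expanding $s^{r-j}=((s-1)+1)^{r-j}$ in (\ref{EXL}), using $\binom{r}{j}\binom{r-j}{r-u}=\binom{r}{u}\binom{u}{j}$ and Vandermonde does recover (\ref{exp2}), so that alternative works as well.
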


\begin{proof}
To constitute the $k$ lists we use the $r$ first elements which are supposed
in different lists to identify the $r$ first lists and we choose $k-r$
elements form the remaining elements, with $\binom{n-r}{k-r}$ possibilities,
as head list of the $k-r$ remaining lists. Now to retch the condition that
in each list we have at least $s$ elements, we constitute $k$ groups of $%
(s-1)$ elements form the $n-k$ remaining elements and we have $\binom{n-k}{%
\allowbreak s-1,\ldots ,s-1,n-sk}$ ways to do it, and consider all the
permutations of each group so we get $\left( (s-1)!\right) ^{k}$
possibilities. Now, for the $r$ first elements we suppose that $j$ of them
are head lists so we choose them with $\binom{r}{j}$ ways and order the $r-j$
elements after an element of each group and we have $(s-1)^{r-j}$
possibilities. It remains to affect the $n-sk$ remaining elements, so the
first one has $\left( k+j\right) $ possibilities ( $k:$ at the end of each
lists or before the $j$ supposed head lists), the second one have $\left(
k+j+1\right) $ possibilities (one possibilities added by the previews
element) and so one \ldots\ the last element have $\left( k+j\right) +\left(
n-sk-1\right) =n+j-(s-1)k-1$ possibilities. This gives $\frac{\left(
n+j-(s-1)k-1\right) !}{\left( k+j-1\right) !}=\left( k+j\right) \cdots
\left( n+j-(s-1)k-1\right) $ possibilities. Summing over all possible values
of $j$ we get$\binom{n-r}{k-r}\binom{n-k}{\allowbreak s-1,\ldots ,s-1,n-sk}%
\sum\limits_{j=0}^{r}\binom{r}{j}\frac{\left( n+j-(s-1)k-1\right) !}{\left(
k+j-1\right) !}$ $(s-1)^{r-j}$ which, after simplification, gives the result.
\end{proof}

Note that the explicit formula of the $s$-associated Lah numbers (\ref{lahr}%
) is obtained form (\ref{exp2}) for $r=0$ and $r=1$. Also, for $s=1$, we get
the explicit formula of the $r$-Lah numbers \cite[Eq 3]{BelBel13}.

From (\ref{exp2}) and (\ref{EXL}) we can state the following, which is very
nice in terms of identities related to binomial coefficients.

\begin{proposition}
We have 
\begin{equation}
\sum\limits_{j=0}^{r}\dbinom{r}{j}\dbinom{n+k+j-1}{n}(s-1)^{r-j}=\sum%
\limits_{j=0}^{r}\dbinom{r}{j}\binom{n-\left( s-1\right) k-1}{k+j-1}s^{r-j},
\label{cccc}
\end{equation}
\end{proposition}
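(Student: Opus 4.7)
The plan is to prove the identity by recognizing that both sides evaluate the same counting function. Indeed, after multiplication by $(n-r)!/(k-r)!$, the right-hand side is exactly the explicit formula (\ref{EXL}) for $\lah{n}{k}_{r}^{(s)}$, while the left-hand side is a rewriting, via the symmetry $\binom{a}{b}=\binom{a}{a-b}$, of the alternative explicit formula (\ref{exp2}) for the same quantity. Since both expressions compute $\lah{n}{k}_{r}^{(s)}$, they must agree. This is essentially the one-line combinatorial proof suggested by the preamble ``From (\ref{exp2}) and (\ref{EXL}) we can state the following''.

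For an independent verification that avoids any appeal to Lah numbers, I would give the following algebraic argument. On the right-hand side expand $s^{r-j}=((s-1)+1)^{r-j}$ via the binomial theorem as $\sum_{a=0}^{r-j}\binom{r-j}{a}(s-1)^{a}$, substitute, and swap the order of summation. Using the trinomial identity $\binom{r}{j}\binom{r-j}{a}=\binom{r}{a}\binom{r-a}{j}$, the coefficient of $(s-1)^{a}$ becomes
\[
\binom{r}{a}\sum_{j}\binom{r-a}{j}\binom{n-(s-1)k-1}{k+j-1}.
\]
The inner sum is a Vandermonde convolution, as one sees after rewriting $\binom{r-a}{j}=\binom{r-a}{(r-a)-j}$ and adding lower indices; it collapses to the single binomial coefficient $\binom{(r-a)+n-(s-1)k-1}{(r-a)+k-1}$.

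To conclude, one matches this term by term with the left-hand side, which after the change of variable $a=r-j$ takes the form $\sum_{a}\binom{r}{a}(s-1)^{a}\binom{n+k+r-a-1}{n}$. The main obstacle is purely one of bookkeeping: verifying that the binomial coefficient produced by Vandermonde is indeed equal, up to the symmetry $\binom{a}{b}=\binom{a}{a-b}$ and careful tracking of the index conventions used in (\ref{exp2}) versus (\ref{EXL}) (the symbol $n$ plays slightly different roles under the two formulas, differing by a translation by $sk$), to the one on the left. Once aligned, the coefficients of $(s-1)^{a}$ agree for every $a$, and the identity follows.
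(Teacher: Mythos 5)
Your proposal is correct and takes essentially the same route as the paper, whose entire argument is the observation that (\ref{EXL}) and (\ref{exp2}) are two explicit formulas for the same numbers $\lah{n}{k}_{r}^{(s)}$; your supplementary check (expanding $s^{r-j}=((s-1)+1)^{r-j}$, trinomial revision, Vandermonde) is sound and matches coefficients of $(s-1)^{a}$ exactly as claimed. Your remark about the shift by $sk$ is not mere bookkeeping but essential: as printed, the two sides of (\ref{cccc}) use $n$ in different normalizations (the right side with $n$ as in (\ref{EXL}), the left side with $n$ replaced by $n-sk$ after applying $\binom{a}{b}=\binom{a}{a-b}$ to (\ref{exp2})), and the identity is false if the same $n$ is used on both sides (e.g.\ $r=0$, $s=1$, $n=k=2$), so your alignment step is precisely what makes the statement correct.
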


From (\ref{ih}) and (\ref{exp2}) we get a second expression, dual to
relation (\ref{Getit}).

\begin{proposition}
we have%
\begin{equation*}
\sum\limits_{\substack{ i_{1}+i_{2}+\cdots +i_{k}=n}}\left( i_{1}+s\right)
\left( i_{2}+s\right) \cdots \left( i_{r}+s\right) =\sum\limits_{j=0}^{r}%
\dbinom{r}{j}\dbinom{n+k+j-1}{n}(s-1)^{r-j}
\end{equation*}
\end{proposition}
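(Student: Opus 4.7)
My main approach will be a direct generating function computation. Treating the left-hand side as a function of $n$, I would write $(i+s)=(i+1)+(s-1)$, so that
\[
\sum_{i\geq 0}(i+s)z^{i}=\frac{1}{(1-z)^{2}}+\frac{s-1}{1-z}.
\]
Since each of the $r$ factors $(i_{l}+s)$ contributes this series and each of the remaining $k-r$ indices $i_{l}$ contributes only $\sum_{i\geq 0}z^{i}=(1-z)^{-1}$, the ordinary generating function of the LHS is
\[
\Bigl(\frac{1}{(1-z)^{2}}+\frac{s-1}{1-z}\Bigr)^{\!r}\,\frac{1}{(1-z)^{k-r}}.
\]
Expanding the $r$-th power by the binomial theorem and combining exponents collapses this to $\sum_{j=0}^{r}\binom{r}{j}(s-1)^{r-j}(1-z)^{-(k+j)}$. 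Extracting $[z^{n}]$ via the elementary identity $[z^{n}](1-z)^{-(k+j)}=\binom{n+k+j-1}{n}$ then produces the right-hand side of the proposition.

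A second, purely formal route is the one hinted at by the paragraph preceding the statement: simply equate the two explicit formulas (\ref{32}) and (\ref{exp2}) for $\lah{n}{k}_{r}^{(s)}$. Cancelling the common prefactor $(n-r)!/(k-r)!$ and applying the substitution $N:=n-sk$ turns the composition sum in (\ref{32}) into $\sum_{i_{1}+\cdots+i_{k}=N}(i_{1}+s)\cdots(i_{r}+s)$, while the binomial coefficient $\binom{n+j-(s-1)k-1}{k+j-1}$ in (\ref{exp2}) rewrites as $\binom{N+k+j-1}{k+j-1}=\binom{N+k+j-1}{N}$; renaming $N$ back to $n$ yields the statement.

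\textbf{Main obstacle.} The proposition is essentially bookkeeping built on results already proved, so there is no real obstacle. The only point requiring attention is the alignment of exponents in the generating function (or equivalently the index shift $N=n-sk$ in the alternative route), in order to be sure that the $j$-dependent binomial on the right emerges with the correct arguments $\binom{n+k+j-1}{n}$ rather than some shifted variant.
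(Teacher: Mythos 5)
Your proposal is correct, and your primary route is genuinely different from the paper's. The paper offers no computation at all for this proposition: it obtains it purely by comparing the two explicit formulas it has already proved for the $s$-associated $r$-Lah numbers, namely (\ref{32}) (coming from (\ref{ih})) and (\ref{exp2}), cancelling the common factor $\frac{(n-r)!}{(k-r)!}$ and shifting the index $n\mapsto n-sk$ — exactly your second, "purely formal" route, including the rewriting $\binom{n+j-(s-1)k-1}{k+j-1}=\binom{N+k+j-1}{N}$ with $N=n-sk$. Your main argument instead proves the identity directly by ordinary generating functions: writing $\sum_{i\geq 0}(i+s)z^{i}=\frac{1}{(1-z)^{2}}+\frac{s-1}{1-z}$ for each of the $r$ weighted indices and $\frac{1}{1-z}$ for each of the remaining $k-r$ indices, so that the product equals $\sum_{j=0}^{r}\binom{r}{j}(s-1)^{r-j}(1-z)^{-(k+j)}$, and then extracting $[z^{n}]$; I checked the exponent bookkeeping and it is right (the power of $(1-z)$ is $-2j-(r-j)-(k-r)=-(k+j)$), and it implicitly uses $k\geqslant r$, which is the paper's standing hypothesis. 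What each approach buys: the paper's comparison gets the identity for free once both combinatorial explicit formulas are available, but it is not an independent verification; your generating-function computation is self-contained, does not rely on the combinatorial derivation of (\ref{exp2}), and in fact serves as a cross-check of that theorem (and, together with (\ref{32}), would even yield an alternative proof of (\ref{exp2})).
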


\section{Recurrence relations}

The $s$-associated $r$-Stirling numbers satisfy recurrence relations as the
regular $s$-associated Stirling numbers, using three terms of two triangles: the $(r-1)$-Stirling triangle and the $r$-Stirling triangle.

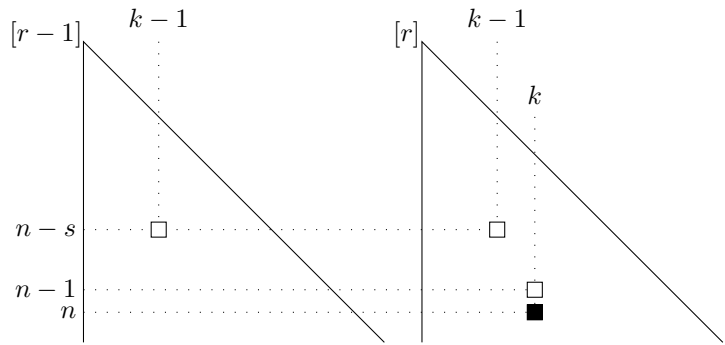
\begin{figure}[!h]
\centering
\begin{tikzpicture}
	\draw (0,0)--(0,4)--(4,0);% le triangle 

	%resultat point:
	\draw[thin,loosely dotted] (0,1.5)--(5.5,1.5);% la droite des n-s
	\draw[thin,loosely dotted] (1,4.0)--(1,1.5);% la droite des k
	\draw[fill=white] (0.9,1.4) rectangle(1.1,1.6);% rectangle des (n,k) au point 2.5,0.5
	\node  at (1,4.3) {$k-1$};% label k
	\node  at (-0.5,1.5) {$n-s$};% label n
	\node  at (-0.5,0.7) {$n-1$};% label n
	%%%%%%%%%%%%%%%%% tiangle 2%%%%%%%%%%%%%%%%%%%%
	\draw (4.5,0)--(4.5,4)--(8.5,0);% le triangle 
	\draw[thin,loosely dotted] (0,0.4)--(6,0.4);% la droite des n
	\draw[thin,loosely dotted] (0,0.7)--(6,0.7);% la droite des n-1
	\draw[thin,loosely dotted] (5.5,4)--(5.5,1.5);% la droite des k-1
	\draw[thin,loosely dotted] (6.0,3)--(6.0,0.4);% la droite des k
	\draw[fill=white] (5.4,1.4) rectangle(5.6,1.6);% rectangle des (n,k) au point 2.5,0.5
	\draw[fill=white] (5.9,0.6) rectangle(6.1,0.8);% rectangle des (n,k) au point 2.5,0.5
	\draw[fill=black] (5.9,0.3) rectangle(6.1,0.5);% rectangle des (n,k) au point 2.5,0.5
	\node  at (5.5,4.3) {$k-1$};% label k
	\node  at (6,3.3) {$k$};% label k
	\node  at (-0.2,0.4) {$n$};% label n
	
	\node  at (4.3,4.1) {$[r]$};% label k
	\node  at (-0.5,4.1) {$[r-1]$};% label k

\end{tikzpicture}
\caption{Triangular recurrence relation given the value of the black element as linear combination of the values of the three others, for the $s$-associate $r$-Stirling numbers of the three kinds}
\label{Tulll}
\end{figure}
The recurrence relation of the $s$-associated $r$-Stirling numbers of the
first kind is given as follows.

\begin{theorem}
Let $r,k,s,$ and $n$ be nonnegative integers such that $n\geqslant sk$ and $%
k\geqslant r$, we have%
\begin{equation}
\stirlingf {n}{k}_{r}^{(s)}=\binom{n-r-1}{s-1}\left( s-1\right) !\stirlingf {%
n-s}{k-1}_{r}^{(s)}+r\binom{n-r-1}{s-2}\left( s-1\right) !\stirlingf {n-s}{%
k-1}_{r-1}^{(s)}+(n-1)\stirlingf {n-1}{k}_{r}^{(s)}.
\end{equation}
\end{theorem}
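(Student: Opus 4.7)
The plan is to mimic the combinatorial proof of the $s$-associated Lah recurrence (\ref{srLah}) and of the classical $s$-associated Stirling-of-the-first-kind recurrence, but with an extra case split forced by the $r$-Stirling restriction. I would condition on the cycle $C$ that contains the element $n$ (which is necessarily not among the $r$ distinguished elements, since we may assume $n>r$), and distinguish the two situations: either $|C|=s$ (so removing $n$ breaks the size-$\geq s$ constraint), or $|C|\geq s+1$ (removing $n$ preserves it).

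When $|C|\geq s+1$, I would argue as in the unrestricted case: erase $n$ from its cycle. What remains is a permutation of $Z_{n-1}$ with $k$ cycles, each of size $\geq s$, and with elements $1,\ldots ,r$ still in distinct cycles; there are $\stirlingf{n-1}{k}_r^{(s)}$ of them. Reinserting $n$ immediately after any of the $n-1$ elements of $Z_{n-1}$ yields all such permutations exactly once, contributing $(n-1)\stirlingf{n-1}{k}_r^{(s)}$.

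When $|C|=s$, the $r$-Stirling restriction forces $C$ to contain at most one of the first $r$ elements, so I would split this into two sub-cases. If $C$ contains none of $\{1,\ldots ,r\}$, I choose the $s-1$ companions of $n$ from the $n-1-r$ elements outside $\{1,\ldots ,r,n\}$ in $\binom{n-r-1}{s-1}$ ways, arrange $C$ as a cycle in $(s-1)!$ ways, and arrange the remaining $n-s$ elements (still containing all of $1,\ldots ,r$) into $k-1$ admissible cycles in $\stirlingf{n-s}{k-1}_r^{(s)}$ ways. If instead $C$ contains exactly one of $\{1,\ldots ,r\}$, I pick which one in $r$ ways, pick the remaining $s-2$ companions from the $n-r-1$ non-distinguished non-$n$ elements in $\binom{n-r-1}{s-2}$ ways, cyclically arrange $C$ in $(s-1)!$ ways, and count admissible arrangements of the remaining $n-s$ elements containing only $r-1$ distinguished elements, obtaining $\stirlingf{n-s}{k-1}_{r-1}^{(s)}$. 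Summing the two sub-cases with the $|C|\geq s+1$ contribution gives the stated identity.

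The main subtlety, rather than any obstacle, is the sub-case split in $|C|=s$: one must notice that the $r$-Stirling condition rules out two or more distinguished elements lying in $C$ together, so exactly two sub-cases appear, accounting for the two distinct $\binom{n-r-1}{\cdot}$ terms with different lower indices and different $r$-parameters on the right. One should also check the degenerate ranges: when $s=1$ the binomial $\binom{n-r-1}{s-2}=\binom{n-r-1}{-1}=0$ kills the middle term and the recurrence collapses to the known $r$-Stirling recurrence (\ref{recS1}); when $r=0$ the middle term vanishes and one recovers the standard $s$-associated Stirling recurrence. These sanity checks confirm the combinatorial reading.
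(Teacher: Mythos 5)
Your proposal is correct and follows essentially the same combinatorial argument as the paper: condition on the cycle containing $n$, split according to whether it has exactly $s$ elements (with the sub-split on whether it contains one of the $r$ distinguished elements, which the distinct-cycles condition caps at one) or more, yielding the three terms. Your removal/reinsertion phrasing of the $(n-1)\stirlingf{n-1}{k}_{r}^{(s)}$ term and the boundary checks at $s=1$ and $r=0$ are just slightly more explicit versions of what the paper does.
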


\begin{proof}
Let us consider the $n^{th}$ element, if it belongs to a cycle containing
exactly $s$ elements not from the $r$ elements, we have $\binom{n-r-1}{s-1}$
ways to choose the $\left( s-1\right) $ remaining elements and $\left(
s-1\right) !$ ways to constitute the cycle, then distribute the $\left(
n-s\right) $ remaining elements on the $\left( k-1\right) $ remaining cycles
such that each cycle has at least $s$ element and the $r$ first elements are
in distinct cycles, so we have $\stirlingf {n-s}{k-1}_{r}^{(s)}$ ways to do
it. Thus we get $\binom{n-r-1}{s-1}\left( s-1\right) !\stirlingf {n-s}{k-1}%
_{r}^{(s)}$ possibilities. Else, if one of the $r$ first elements belongs to
the cycle, we have $r$ ways to choose one of the $r$ first elements, $\binom{%
n-r-1}{s-2}$ ways to choose the remaining $\left( s-2\right) $ ones and $%
\left( s-1\right) !$ ways to constitute the cycle, then distribute the $%
\left( n-s\right) $ remaining elements on the $\left( k-1\right) $ remaining
cycles such that, in each cycle, there is at least $s$ elements and the $r-1$
first elements are in distinct cycles, so we have $\stirlingf {n-s}{k-1}%
_{r-1}^{(s)}$ possibilities to do it. Thus we get $r\binom{n-r-1}{s-2}\left(
s-1\right) !\stirlingf {n-s}{k-1}_{r-1}^{(s)}$ possibilities. Else, we
consider all the permutations of $\left( n-1\right) $ elements with $k$
cycles under the usual conditions which can be done by $\stirlingf {n-1}{k}%
_{r}^{(s)}$ ways, then add the $n^{th}$ element to the $k$ cycles and we
have $\left( n-1\right) $ possibilities.
\end{proof}

For $s=1$ we get relation (\ref{recS1}), and for $r=1$ using Pascal's
formula we get the recurrence relation of the $s$-associated Stirling
numbers of first kind \cite[eq 4.8]{MR600368}.

The $s$-associated $r$-Stirling numbers of the second kind satisfy the
following recurrence relation.

\begin{theorem}
Let $r,k,s,$ and $n$ be nonnegative integers such that $n\geqslant sk$ and $%
k\geqslant r$, we have%
\begin{equation}
\stirlings {n}{k}_{r}^{(s)}=\binom{n-r-1}{s-1}\stirlings
{n-s}{k-1}_{r}^{(s)}+r\binom{n-r-1}{s-2}\stirlings
{n-s}{k-1}_{r-1}^{(s)}+k\stirlings {n-1}{k}_{r}^{(s)}.
\end{equation}
\end{theorem}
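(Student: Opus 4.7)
The plan is to mimic the combinatorial case analysis used in the preceding theorem (for the first kind), simplified by the fact that for partitions (rather than cycles) no internal arrangement factor arises. The partition of $Z_{n}$ is classified according to the subset $B$ that contains the distinguished element $n$, and within that classification according to whether $|B|=s$ or $|B|>s$.

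First I would handle the two cases $|B|=s$. In this situation the remaining $n-s$ elements must form a partition into $k-1$ blocks, each of size $\geqslant s$, satisfying the distinct-blocks condition for some prefix of the first $r$ elements; by definition this count is an $s$-associated $r$- or $(r-1)$-Stirling number of the second kind. The sub-case where none of the first $r$ elements lies in $B$ contributes $\binom{n-r-1}{s-1}\stirlings{n-s}{k-1}_{r}^{(s)}$: choose the $s-1$ companions of $n$ from the $n-r-1$ non-distinguished remaining elements, and then apply the recurrence hypothesis on the leftover set, in which all $r$ distinguished elements are still present and must still be separated. The sub-case where exactly one of the first $r$ elements lies in $B$ contributes $r\binom{n-r-1}{s-2}\stirlings{n-s}{k-1}_{r-1}^{(s)}$: $r$ choices for that distinguished element, $\binom{n-r-1}{s-2}$ choices to complete $B$, and the distinct-blocks restriction now affects only the remaining $r-1$ distinguished elements. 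Note that a block of size $s$ cannot contain two distinguished elements since the partition respects the distinct-blocks condition, so these two sub-cases are genuinely exhaustive for $|B|=s$.

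Next I would treat $|B|>s$. Here removing $n$ from $B$ leaves a block of size $\geqslant s$, so we obtain a valid partition of $Z_{n-1}$ into $k$ blocks with the two constraints (each of size $\geqslant s$ and first $r$ in distinct blocks), counted by $\stirlings{n-1}{k}_{r}^{(s)}$. Conversely, any such partition of $Z_{n-1}$ yields a valid partition of $Z_{n}$ by inserting $n$ into any one of the $k$ blocks — the resulting block has size $\geqslant s+1$, which preserves the associated condition, and the distinct-blocks constraint on the first $r$ elements is untouched since $n>r$. This furnishes the last contribution $k\stirlings{n-1}{k}_{r}^{(s)}$.

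Summing the three disjoint contributions gives the claimed identity. The mild obstacle is simply bookkeeping: confirming that the three cases — $|B|=s$ with $B\cap Z_{r}=\emptyset$, $|B|=s$ with $|B\cap Z_{r}|=1$, and $|B|\geqslant s+1$ — are mutually exclusive and cover every admissible partition, and that in each case the induced configuration on $Z_{n}\setminus B$ or on $Z_{n-1}$ is counted by the correct associated $r$- or $(r-1)$-Stirling number. Both verifications follow immediately from the defining combinatorial interpretation.
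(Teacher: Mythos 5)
Your proposal is correct and follows essentially the same case analysis as the paper: classify by the block containing $n$, splitting the size-$s$ case according to whether it contains one of the first $r$ elements, and handle the larger-block case by deleting $n$. In fact your write-up of the third case (insertion of $n$ into one of the $k$ blocks, giving the factor $k$) is stated more carefully than the paper's own text, which at that point slips into speaking of lists, cycles and $(n-1)$ possibilities.
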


\begin{proof}
Let us consider the $n^{th}$ elements, if it belongs to a part containing
exactly $s$ elements not from the $r$ first ones, so we have $\binom{n-r-1}{%
s-1}$ ways to choose the remaining $\left( s-1\right) $ elements and $%
\stirlings {n-s}{k-1}_{r}^{(s)}$ ways to distribute the $\left( n-s\right) 
$ remaining elements on the $\left( k-1\right) $ remaining parts such that,
the $r$ first elements are in distinct parts, and each part, have at least $%
s $ elements which gives $\binom{n-r-1}{s-1}\stirlings
{n-s}{k-1}_{r}^{(s)} $ possibilities. Else, if one of the $r$ first elements
belongs to that part, we have $r$ ways to choose it, and $\binom{n-r-1}{s-2}$
ways to choose the remaining $\left( s-2\right) $, then distribute the $%
\left( n-s\right) $ remaining elements on the $\left( k-1\right) $ remaining
parts such that, the $r-1$ first elements are in distinct parts, and each
part, have at least $s$ elements which can be done by $\stirlings
{n-s}{k-1}_{r-1}^{(s)}$ ways. So we have $r\binom{n-r-1}{s-2}\stirlings
{n-s}{k-1}_{r-1}^{(s)}$ possibilities. Else, we consider all the partitions
of $\left( n-1\right) $ elements on $k$ blocks under the usual conditions
which can be done by $\lah {n-1}{k}_{r}^{(s)}$ ways, then
add the $n^{th}$ element to the $k$ cycles with $\left( n-1\right) $
possibilities.
\end{proof}

For $s=1$ we get relation (\ref{recS22}), and for $r=1$ using Pascal's
formula we get the recurrence relation of the $s$-associated Stirling
numbers of the second kind \cite[eq 4.1]{MR600368}.

The $s$-associated $r$-Lah numbers satisfy the following recurrence relation.

\begin{theorem}
Let $r,k,s,$ and $n$ be nonnegative integers such that $n\geqslant sk$ and $%
k\geqslant r$ we have%
\begin{equation}
\lah {n}{k}_{r}^{(s)}=\binom{n-r-1}{s-1}s!\lah {n-s}{k-1}_{r}^{(s)}+r\binom{n-r-1}{s-2}s!\lah
{n-s}{k-1}_{r-1}^{(s)}+(n+k-1)\lah {n-1}{k}_{r}^{(s)}.
\end{equation}
\end{theorem}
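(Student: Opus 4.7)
The plan is to mimic the combinatorial case analysis of the preceding two theorems (the recurrences for $\stirlingf{n}{k}_{r}^{(s)}$ and $\stirlings{n}{k}_{r}^{(s)}$), now in the ordered-list setting. Fix a valid configuration counted by $\lah{n}{k}_{r}^{(s)}$ and split according to the list $L$ containing the element $n$. The three exhaustive and disjoint cases are: (A) $L$ has exactly $s$ elements and is disjoint from $\{1,\dots,r\}$; (B) $L$ has exactly $s$ elements and contains exactly one element of $\{1,\dots,r\}$; (C) $L$ has at least $s+1$ elements. Summing the contributions of the three cases will produce the three terms of the claimed recurrence, in the same order.

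For case (A), I would choose the $s-1$ other members of $L$ from the $n-r-1$ non-distinguished elements different from $n$ in $\binom{n-r-1}{s-1}$ ways, linearly order $L$ in $s!$ ways, and distribute the remaining $n-s$ elements, which still contain all $r$ distinguished elements, over the other $k-1$ lists in $\lah{n-s}{k-1}_{r}^{(s)}$ ways, yielding $\binom{n-r-1}{s-1}s!\,\lah{n-s}{k-1}_{r}^{(s)}$. For case (B), I would pick which of the $r$ distinguished elements shares $L$ with $n$ in $r$ ways, choose the $s-2$ further members of $L$ in $\binom{n-r-1}{s-2}$ ways, order $L$ in $s!$ ways, and then distribute the remaining $n-s$ elements over the other $k-1$ lists with only $r-1$ distinguished elements still required to sit in distinct lists, giving $r\binom{n-r-1}{s-2}s!\,\lah{n-s}{k-1}_{r-1}^{(s)}$.

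For case (C), deleting $n$ from $L$ leaves a list of size at least $s$, so the restriction to $\{1,\dots,n-1\}$ is a valid $s$-associated $r$-Lah configuration with $k$ lists, counted by $\lah{n-1}{k}_{r}^{(s)}$. Conversely, every such configuration extends uniquely to a case-(C) configuration on $\{1,\dots,n\}$ by reinserting $n$ either as the head of one of the $k$ lists ($k$ choices) or immediately to the right of any of the $n-1$ existing elements ($n-1$ choices), exactly as in the proof of (\ref{srLah}); the resulting $n+k-1$ insertions are pairwise distinct and never create a list of size below $s$. This contributes $(n+k-1)\lah{n-1}{k}_{r}^{(s)}$, and the three contributions add to the desired recurrence. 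The only genuine bookkeeping — and thus the main point to watch — is checking that the inherited sub-configurations really satisfy the $r$-distinct-list and $\geq s$ constraints in each case; this is immediate because the elements $\{1,\dots,r\}$ are never moved and, in case (C), removing one element from an oversized list cannot violate the lower bound $s$.
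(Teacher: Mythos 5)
Your argument is correct and is essentially the paper's own proof: the same three-way case analysis on the list containing the element $n$ (exactly $s$ elements none of them distinguished, exactly $s$ elements with exactly one distinguished element, or at least $s+1$ elements), with the same counts and the same insertion argument giving the factor $n+k-1$. In fact your count $\binom{n-r-1}{s-1}$ in the first case is the one consistent with the stated recurrence, whereas the paper's written proof contains the slip $\binom{n-1}{s-1}$ at that point.
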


\begin{proof}
Let us consider the $n^{th}$ element, if it belongs to a list containing
exactly $s$ elements not from the $r$ first ones, we have $\binom{n-1}{s-1}$
ways to choose the remaining $\left( s-1\right) $ elements and $s!$ ways to
constitute the list, then distribute the $\left( n-s\right) $ remaining
elements into the $\left( k-1\right) $ remaining lists such that each list
has at least $s$ elements and the $r$ first elements are in distinct lists
with $\lah {n-s}{k-1}_{r}^{(s)}$ ways. Thus we get $\binom{%
n-1}{s-1}s!\lah {n-s}{k-1}^{(s)}$ possibilities. Else, if
one of the $r$ first elements belongs to the list, we have $\binom{r}{1}=r$
ways to choose one of the $r$ first elements and $\binom{n-r-1}{s-2}$ ways
to choose the remaining $\left( s-2\right) $ elements and $s!$ ways to
constitute the list, then distribute the $\left( n-s\right) $ remaining
elements into the $\left( k-1\right) $ remaining lists such that each list
has at least $s$ elements and the $r-1$ first elements are in distinct lists
and we have $\lah {n-s}{k-1}_{r-1}^{(s)}$ ways to do it.
Thus we get $r\binom{n-r-1}{s-2}s!\lah
{n-s}{k-1}_{r-1}^{(s)}$ possibilities. Else, we consider all the partitions
of $\left( n-1\right) $ elements into $k$ lists under the usual conditions
which can be done by $\lah {n-1}{k}_{r}^{(s)}$ ways, then
add the $n^{th}$ element to the $k$ lists and we have $\left( n-1\right) $
possibilities after each element or $k$ possibilities as a head list, which
gives $(n+k-1)\lah {n-1}{k}_{r}^{(s)}$ possibilities.
\end{proof}

For $s=1$ we get relation (\ref{recLah}), and for $r=1$ and using Pascal's
formula we get the recurrence relation (\ref{srLah}).

\section{Combinatorial identities or convolution relations}

In this section, we establish some combinatorial identities for the $s$%
-associated $r$-Stirling numbers using a combinatorial approach. we can also consider them as convolution relations.

The next identity is an expressions of $s$-associated $r$-Stirling numbers
in terms of the $s$-associated $r^{\prime }$-Stirling numbers with $%
r^{\prime }\leqslant r$.

\begin{theorem}
Let $p,r,k$ and $n$ be nonnegative integers such that $p\leqslant r\leqslant
k$ and $n\geqslant sk$, we have%
\begin{equation}
\stirlingf {n}{k}_{r}^{(s)}=\sum\limits_{i=(s-1)p}^{n-p-s\left( k-p\right) }%
\frac{\left( n-r\right) !}{\left( n-r-i\right) !}\dbinom{i-p\left(
s-2\right) -1}{p-1}\stirlingf {n-p-i}{k-p}_{r-p}^{(s)}.  \label{S1rsfiS1p}
\end{equation}
\end{theorem}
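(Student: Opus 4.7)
The plan is to give a direct combinatorial argument, classifying the permutations counted by $\stirlingf{n}{k}_r^{(s)}$ according to the total number $i$ of ``non-special'' elements that end up sharing a cycle with one of the first $p$ special elements.

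First I would fix the distinguished set $P = \{1, 2, \ldots, p\} \subseteq \{1, \ldots, r\}$. Since the first $r$ elements lie in distinct cycles, the $p$ cycles containing the elements of $P$ are disjoint from the cycles containing $\{p+1, \ldots, r\}$. For a permutation counted by the left-hand side, let $i$ denote the number of elements of $\{r+1, \ldots, n\}$ that land in the cycles of $P$ (elements of $\{p+1,\ldots,r\}$ are excluded since they must head their own distinct cycles). The bounds $(s-1)p \leq i \leq n - p - s(k-p)$ come from the fact that each of the $p$ cycles needs at least $s-1$ non-leader elements, and at the same time the remaining $n - p - i$ elements must fill $k-p$ cycles, each of size $\geq s$.

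Next I would count, for a fixed value of $i$, the number of ways to build the $p$ distinguished cycles. The key observation is the standard bijection: a cycle of length $m$ containing a designated element $j$ is in bijection with a linear sequence $(j, a_1, \ldots, a_{m-1})$, i.e. with an ordered list of the remaining $m-1$ elements. So I would (a) pick and linearly order $i$ elements from the $n - r$ non-special ones, giving $(n-r)!/(n-r-i)!$ choices; (b) split this length-$i$ word into $p$ ordered blocks of sizes $i_1, \ldots, i_p$ with each $i_j \geq s-1$; (c) prepend block $j$ with element $j \in P$ to obtain the $j$-th cycle. Step (b) is a composition count: setting $i_j = i'_j + (s-1)$ with $i'_j \geq 0$ and $\sum i'_j = i - p(s-1)$, stars and bars yields
\begin{equation*}
\binom{i - p(s-1) + p - 1}{p - 1} = \binom{i - p(s-2) - 1}{p - 1},
\end{equation*}
which is exactly the binomial appearing in the identity.

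Finally, the remaining $n - p - i$ elements must be partitioned into $k - p$ cycles, each of size at least $s$, with the $r - p$ elements $\{p+1, \ldots, r\}$ lying in distinct cycles. After relabeling, this count is by definition $\stirlingf{n-p-i}{k-p}_{r-p}^{(s)}$. Multiplying the three factors together and summing $i$ over its admissible range yields the claimed identity.

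The main obstacle is keeping the bookkeeping honest: one must (i) verify the bijection between $(\text{cycle},\text{special leader})$ and $(\text{special leader},\text{linear word})$ does not overcount, (ii) correctly separate the ``non-special'' pool $\{r+1,\ldots,n\}$ from the ``still-special'' pool $\{p+1,\ldots,r\}$ so that the residual factor is genuinely an $(r-p)$-Stirling number, and (iii) pin down the summation bounds from the size constraints on both the chosen and remaining cycles. Once these three points are in place, the algebraic shift $-p(s-1)+p-1 = -p(s-2)-1$ makes the binomial match the statement exactly.
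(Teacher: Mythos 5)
Your argument is correct and follows essentially the same route as the paper: both classify permutations by the number $i$ of elements of $\{r+1,\ldots,n\}$ lying in the $p$ cycles of $1,\ldots,p$, count the ways to build those cycles, and recognize the residual factor as $\stirlingf{n-p-i}{k-p}_{r-p}^{(s)}$ before summing over $i$. The only cosmetic difference is that the paper obtains the factor $i!\binom{i-p(s-2)-1}{p-1}$ by citing the closed form $\stirlingf{i+p}{p}_{p}^{(s)}$ from equation (\ref{diga}), whereas you rederive it inline via the cycle-to-word bijection and stars and bars.
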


\begin{proof}
Let us consider the $i$ $\left( (s-1)p\leqslant i\leqslant n-p-s\left(
k-p\right) \right) $ elements which belongs to the $p$ cycles containing the
elements $1,\ldots ,p$. We have $\dbinom{n-r}{i}$ possibilities to choose
the $i$ elements and $\stirlingf {i+p}{p}_{p}^{(s)}$ ways to construct the
corresponding cycles. The remaining $n-p-i$ elements must form the $k-p$
remaining cycles; this can be done in $\stirlingf {n-p-i}{k-p}_{r-p}^{(s)}$
ways. Using equation (\ref{diga}) and summing for all $i$, we get the proof.
\end{proof}

For $p=r$, we obtain an expression of the $s$-associated $r$-Stirling
numbers of the first kind in terms of the regular $s$-associated Stirling
numbers of the first kind%
\begin{equation}
\stirlingf {n}{k}_{r}^{(s)}=\sum\limits_{i=(s-1)r}^{n-r-s(k-r)}\frac{\left(
n-r\right) !}{\left( n-r-i\right) !}\dbinom{i-r\left( s-2\right) -1}{r-1}%
\stirlingf {n-r-i}{k-r}^{(s)}.  \label{S1rsS1s}
\end{equation}

For $s=1$, we obtain the equation given by Broder \cite[eq 26]{MR743795} and
for $r=1$, we get a vertical recurrence relation for the classical $s$%
-associated Stirling numbers of the first kind%
\begin{equation}
\stirlingf {n}{k}^{(s)}=\sum\limits_{i=s-1}^{n-s(k-1)-1}\frac{\left(
n-1\right) !}{\left( n-i-1\right) !}\stirlingf {n-i-1}{k-1}^{(s)}.
\end{equation}

\begin{theorem}
Let $p,r,k$ and $n$ be nonnegative integers such that $p\leqslant r\leqslant
k$ and $n\geq sk$, we have%
\begin{equation}
\stirlings {n}{k}_{r}^{(s)}=\sum\limits_{i=p-r+s(k-p)}^{n-r-\left(
s-1\right) p}\frac{\left( n-r\right) !}{\left( \left( s-1\right) !\right)
^{p}\left( n-p(s-1)-r\right) !}\dbinom{n-p(s-1)-r}{i}\stirlings
{i+r-p}{k-p}_{r-p}^{(s)}p^{n-p(s-1)-r-i}.
\end{equation}
\end{theorem}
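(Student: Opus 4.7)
The plan is to give a combinatorial proof in the same spirit as the proof of Theorem for $\stirlingf{n}{k}_{r}^{(s)}$ (identity~(\ref{S1rsfiS1p})), by focusing on the $p$ blocks of the partition that host the elements $1,2,\ldots,p$ and counting the rest separately.

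First I would isolate the $p$ parts that contain elements $1,2,\ldots,p$; since the $r$ first elements must be in distinct parts, no element of $\{p+1,\ldots,r\}$ can lie in any of these $p$ parts, so the additional members of these $p$ parts must come from $\{r+1,\ldots,n\}$, a set of cardinality $n-r$. To enforce the associated condition, I would first fix the minimal $s-1$ extra members of each of these $p$ parts: this amounts to splitting a set of size $n-r$ into $p$ ordered blocks of size $s-1$ (one per marked element) plus a leftover block of size $n-r-p(s-1)$, contributing the multinomial factor
\[
\binom{n-r}{\underbrace{s-1,\ldots,s-1}_{p},\,n-p(s-1)-r}=\frac{(n-r)!}{((s-1)!)^{p}\,(n-p(s-1)-r)!}.
\]

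Next I would distribute the remaining $n-p(s-1)-r$ ``free'' elements of $\{r+1,\ldots,n\}$: choose $i$ of them to go to the other $k-p$ parts (that is $\binom{n-p(s-1)-r}{i}$ ways) and throw each of the remaining $n-p(s-1)-r-i$ elements into one of the $p$ already-valid special parts, giving $p^{n-p(s-1)-r-i}$ ways. Finally, the other $k-p$ parts must accommodate the $r-p$ elements $p+1,\ldots,r$ (in distinct parts) together with the $i$ just selected elements, each part still needing at least $s$ members; this is precisely $\stirlings{i+r-p}{k-p}_{r-p}^{(s)}$. Multiplying the four factors and summing over $i$ yields the announced identity.

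The remaining bookkeeping is to justify the summation bounds. The lower bound comes from the associated condition on the last $k-p$ parts: they must contain at least $s(k-p)$ elements in total, i.e. $i+(r-p)\geqslant s(k-p)$, which gives $i\geqslant p-r+s(k-p)$; the upper bound $i\leqslant n-p(s-1)-r$ is forced by the pool of free elements available. The main obstacle is making sure no overcounting occurs: the $p$ special parts are distinguished by the distinct marked elements $1,\ldots,p$, so the multinomial factor is correctly free of any $p!$ denominator, and once the $(s-1)$ minimal extras are fixed, throwing leftover elements arbitrarily among the $p$ parts cannot violate the size-$s$ condition. Verification with $p=r$ and $s=1$ should recover Broder's corresponding vertical-type relation for $r$-Stirling numbers of the second kind, and with $r=p=1$ the known vertical recurrence for $\stirlings{n}{k}^{(s)}$, which I would state as sanity checks after the proof.
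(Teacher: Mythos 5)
Your decomposition is exactly the one the paper itself uses (same multinomial prefactor, same choice of the $i$ elements for the last $k-p$ parts, same factor $p^{n-p(s-1)-r-i}$), but the sentence in which you dismiss overcounting is precisely where the argument fails. The danger is not that a thrown-in leftover could violate the size-$s$ condition, nor a missing $p!$: it is that the final partition does not remember which companions of the marked element $j\in\{1,\dots,p\}$ were the ``designated minimal extras'' and which were ``leftovers''. If the block containing $j$ ends up with $m_j\geqslant s-1$ companions taken from $\{r+1,\dots,n\}$, your construction produces that same partition $\prod_{j=1}^{p}\binom{m_j}{s-1}$ times, so every partition whose special blocks exceed the minimal size is counted with multiplicity as soon as $s\geqslant 2$. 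The overcount is real and the identity as stated actually fails numerically: for $s=2$, $r=p=2$, $n=7$, $k=3$ the right-hand side equals $20\left(\binom{3}{2}\stirlings{2}{1}^{(2)}\cdot 2+\binom{3}{3}\stirlings{3}{1}^{(2)}\right)=140$, whereas Table 2 of the paper gives $\stirlings{7}{3}_{2}^{(2)}=80$; likewise for $r=p=1$, $s=2$, $n=5$, $k=2$ the formula gives $16$ against the true value $\stirlings{5}{2}^{(2)}=10$. Your $s=1$ and Broder sanity checks pass only because for $s=1$ the designated-extras step is empty, which is exactly the case in which no multiplicity arises.

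In fairness, the paper's own proof uses the identical construction and is open to the same objection, so this is not a defect you could have repaired by following it more closely; but the claim ``no overcounting occurs'' is the unjustified (and false) step in your writeup. A sound version of this approach must keep track of the actual numbers $i_1,\dots,i_p\geqslant s-1$ of companions in the $p$ special blocks and sum over these compositions (as is done for the Lah analogue in the proof of (\ref{TooLong})), rather than fixing exactly $s-1$ designated companions per special block and then distributing the remaining elements freely; the factor $p^{n-p(s-1)-r-i}$ cannot be detached from the choice of minimal companions without introducing the binomial multiplicities described above, and carrying out the corrected count leads to a different right-hand side from the one in the statement.
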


\begin{proof}
Let us consider $p$ first elements $(p\leqslant r)$, they constitute $p$
parts with $p(s-1)$ elements so we choose those elements by $\binom{n-r}{%
s-1,\ldots ,s-1,n-sp(s-1)-r}=\frac{\left( n-r\right) !}{\left( \left(
s-1\right) !\right) ^{p}\left( n-p(s-1)-r\right) !}$ ways. Then we choose
the $i$ elements $\left( (s-1)(r-p)+s(k-r)\leqslant i\leqslant n-r-\left(
s-1\right) p\right) $ which belongs to the remaining $k-p$ parts and we have 
$\binom{n-p(s-1)-r}{i}$ ways to do it. Then, distribute them on $k-p$ parts
such that the $r-p$ fixed elements are in distinct parts and each part have
at least $s$ elements, which can be done by $\stirlings
{i+r-p}{k-p}_{r-p}^{(s)}$ possibilities. It remains now to distribute the
remaining $n-p(s-1)-r-i$ elements on the $p$ first parts and we have $%
p^{n-p(s-1)-r-i}$ possibilities. We conclude by summing over all possible
values of $i$.
\end{proof}

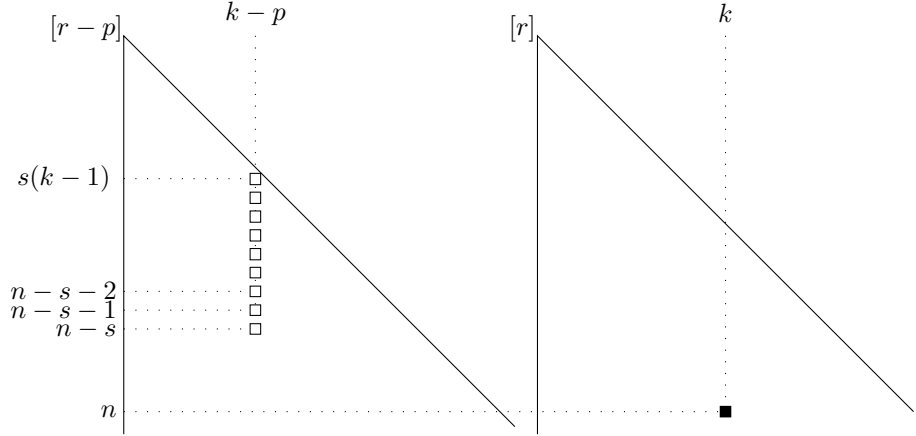
\begin{figure}[!h]
\centering
\begin{tikzpicture}
	\draw (0,-0.3)--(0,5)--(5.2,-0.2);% le triangle 
	\draw[thin,loosely dotted] (0,1.1)--(1.75,1.1) ; % la droite n-p-(s-1)
	\draw[thin,loosely dotted] (1.75,5)--(1.75,1.1);% droite k-1
	\draw[fill=white] (1.68,1.03) rectangle(1.82,1.17);% rectangle des (n-p,k-1)%\node  at (2.5,5.3) {$k-1$};
	\node  at (-0.5,1.1) {$n-s$};% label n
	\node  at (1.75,5.3) {$k-p$};% label k
	% Troisième poit
	\draw[thin,loosely dotted] (0,1.35)--(1.75,1.35) ; % la droite n-p-(s-1)
	%\draw[thin,loosely dotted] (2.0,5)--(2.0,1.6);% droite k-1
	\draw[fill=white] (1.68,1.28) rectangle(1.82,1.42);% rectangle des (n-p,k-1)
	%\node  at (2.0,5.3) {$k-2$};
	\node  at (-0.8,1.35) {$n-s-1$};% label n
	% quatrième poit:
	\draw[thin,loosely dotted] (0,1.6)--(1.75,1.6) ; % la droite n-p-(s-1)
	%\draw[thin,loosely dotted] (1.75,5)--(1.75,2.1);% droite k-1
	\draw[fill=white] (1.68,1.53) rectangle(1.82,1.67);% rectangle des (n-p,k-1)
	%\node  at (1.75,5.3) {$k-3$};
	\node  at (-0.8,1.6) {$n-s-2$};% label n
	% les autres 
	\draw[fill=white] (1.68,1.78) rectangle(1.82,1.92);
	\draw[fill=white] (1.68,2.02) rectangle(1.82,2.17);
	\draw[fill=white] (1.68,2.27) rectangle(1.82,2.42);
	\draw[fill=white] (1.68,2.52) rectangle(1.82,2.67);
	\draw[fill=white] (1.68,2.77) rectangle(1.82,2.92);
		\draw[thin,loosely dotted] (0,3.1)--(1.75,3.1) ;
	\draw[fill=white] (1.68,3.02) rectangle(1.82,3.17);

	\node  at (-0.8,3.1) {$s(k-1)$};% label n
	
	\draw (5.5,-0.3)--(5.5,5)--(10.5,0);% le triangle 
	\draw[thin,loosely dotted] (0,0)--(8,0.0);% la droite des n
	\draw[thin,loosely dotted] (8,5)--(8,0.0);% la droite des k
	\draw[fill=black] (7.93,-0.07) rectangle(8.07,0.07);% rectangle des (n,k) au point 2.5,0.5
	\node  at (8,5.3) {$k$};% label k
	\node  at (-0.2,0.0) {$n$};% label n
	
	\node  at (5.3,5.1) {$[r]$};% label k
	\node  at (-0.5,5.1) {$[r-p]$};% label k
	
\end{tikzpicture}
\caption{The value of an element in the $s$-associated $r$-Stirling table in terms of the consecutive vertical elements in the $s$-associated $r-p$-Stirling table as an inner product result}
\label{Tuu}
\end{figure}

For $p=r$ we get an expression of the $s$-associated $r$-Stirling numbers of
the second kind in terms of the regular $s$-associated Stirling numbers of
the second kind%
\begin{equation}
\stirlings {n}{k}_{r}^{(s)}=\sum\limits_{i=s(k-r)}^{n-sr}\frac{\left(
n-r\right) !}{\left( \left( s-1\right) !\right) ^{r}\left( n-sr\right) !}%
\dbinom{n-sr}{i}\stirlings {i}{k-r}^{(s)}r^{n-sr-i},
\end{equation}%
also, for $s=1$, we obtain the equation given by Broder \cite[eq 31]%
{MR743795} and for $r=1$, we get a vertical recurrence relation for the
classical $s$-associated Stirling numbers of the second kind

\begin{equation}
\stirlings {n}{k}^{(s)}=\sum\limits_{i=s(k-1)}^{n-s}\dbinom{n-1}{s-1}%
\dbinom{n-s}{i}\stirlings {i}{k-1}^{(s)}.
\end{equation}

\begin{theorem}
Let $p,r,k$ and $n$ be nonnegative integers such that $p\leqslant r\leqslant
k$ and $n\geq sk$, we have%
\begin{equation}
\stirlings {n}{k}_{r}^{(s)}=\sum_{i=0}^{p}\frac{(k-r+p-i)!}{(k-r)!}\binom{%
p}{i}\binom{n-r}{i(s-1)}\frac{\left( i(s-1)\right) !}{\left( \left(
s-1\right) !\right) ^{i}}\stirlings {n-p-i(s-1)}{k-i}_{r-p}^{(s)}.
\label{hoba}
\end{equation}
\end{theorem}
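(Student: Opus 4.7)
The plan is to give a combinatorial argument that partitions the set of objects counted by $\stirlings{n}{k}_r^{(s)}$ according to a natural statistic, and then count each class by the corresponding summand on the right-hand side. Fix a partition $\pi$ of $Z_n$ counted by $\stirlings{n}{k}_r^{(s)}$. Since the $r$ distinguished elements $\{1,\ldots,r\}$ lie in $r$ distinct blocks, the $p$ ``extra'' distinguished elements $\{r-p+1,\ldots,r\}$ occupy $p$ blocks, each of which contains none of $\{1,\ldots,r-p\}$. Let $i$ denote the number of these $p$ blocks whose size is exactly $s$; this index $i$ will play the role of the summation variable.

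Next, I will describe the data needed to reconstruct $\pi$ from the reduced partition on $n-p-i(s-1)$ elements obtained by deleting the $i$ ``small'' blocks entirely and then deleting the remaining $p-i$ extra distinguished elements from the (necessarily larger) blocks that contain them. The reconstruction data is:
\begin{itemize}
\item the choice of which $i$ of the $p$ extra distinguished elements belong to size-$s$ blocks, giving the factor $\binom{p}{i}$;
\item the choice of which $i(s-1)$ of the $n-r$ unrestricted elements fill those $i$ blocks, giving $\binom{n-r}{i(s-1)}$;
\item the distribution of these $i(s-1)$ elements into $i$ labelled groups of size $s-1$ (one per chosen extra distinguished element), giving $\binom{i(s-1)}{s-1,\ldots,s-1}=(i(s-1))!/((s-1)!)^{i}$;
\item the placement of the remaining $p-i$ extra distinguished elements, which must be inserted into $p-i$ distinct blocks of the reduced partition that do not already contain any of $\{1,\ldots,r-p\}$; of the $k-i$ blocks in the reduced partition, exactly $r-p$ are forbidden, leaving $k-i-(r-p)=k-r+p-i$ available. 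An ordered injection of the $p-i$ elements into these blocks contributes $(k-r+p-i)(k-r+p-i-1)\cdots(k-r+1)=(k-r+p-i)!/(k-r)!$.
\end{itemize}
The reduced partition itself, once these data have been stripped away, is a partition of $n-p-i(s-1)$ elements into $k-i$ blocks with the first $r-p$ elements in distinct blocks and every block of size at least $s$, and is therefore counted by $\stirlings{n-p-i(s-1)}{k-i}_{r-p}^{(s)}$. Multiplying these factors and summing over $i=0,1,\ldots,p$ yields identity~(\ref{hoba}).

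The main obstacle is to check that the reduction step produces a valid partition counted by the inner Stirling number: specifically, that after removing the $p-i$ non-deleted extra distinguished elements from their blocks, each such block still has at least $s$ elements. This is exactly why $i$ is defined to count the blocks of size precisely $s$: the blocks holding the remaining $p-i$ extra distinguished elements have size at least $s+1$ by construction, so removing one element leaves size at least $s$, preserving the $s$-associated condition. A secondary point, easy to verify but worth stating, is that the reconstruction from the data above is a bijection: the small blocks and their contents are fully recorded, and the placement data uniquely determines where each leftover extra element is reinserted. Once these two points are confirmed, the identity follows by straightforward enumeration.
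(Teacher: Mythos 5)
Your proposal is correct and follows essentially the same decomposition as the paper's proof: classify by the number $i$ of blocks of size exactly $s$ among those holding the $p$ selected distinguished elements, account for their contents with $\binom{p}{i}\binom{n-r}{i(s-1)}\frac{(i(s-1))!}{((s-1)!)^{i}}$, reduce to a partition counted by $\stirlings{n-p-i(s-1)}{k-i}_{r-p}^{(s)}$, and reinsert the remaining $p-i$ elements into the $k-r+p-i$ admissible blocks giving $\frac{(k-r+p-i)!}{(k-r)!}$. The only differences are cosmetic (you read the construction as a deletion/reconstruction bijection and choose the last $p$ rather than the first $p$ distinguished elements), and your explicit check that the non-small blocks have size at least $s+1$ is a point the paper leaves implicit.
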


\begin{proof}
Let us consider the $p$ first elements, and focus on the $i$ $(0\leq i\leq
p) $ parts containing exactly $s$ elements, we have $\binom{p}{i}$ ways to
choose the $i$ elements from the $p$ first ones, and $\binom{n-r}{i(s-1)}$
ways to choose the $i(s-1)$ remaining elements to have $s$ elements by part,
and $\stirlings {\left( i(s-1)\right) }{i}^{(s-1)}=\frac{\left(
i(s-1)\right) !}{i!\left( \left( s-1\right) !\right) ^{i}}$ ( from \ref{koko}%
) ways to partition the $i(s-1)$ elements on $i$ groups such that each group
have at least $(s-1)$ elements, then affect each group to the $i$ elements
and we have $i!$. Then, we partition the $n-p-i(s-1)$ remaining elements
into $\left( k-i\right) $ parts such that each group has at least $s$
elements and the remaining $r-p$ elements are in distinct subsets, and we
have $\stirlings {n-r-i(s-1)}{k-i}_{r-p}^{(s)}$ ways to do it. Now, it
reminds $\left( p-i\right) $ elements not yet affected. Thus we have $\left(
k-r+p-i\right) $ choice for the first one, $\left( k-r+p-i-1\right) $ choice
for the second one and so on until the last one have $(k-r+1)$ which gives $%
\left( k-r+p-i\right) \left( k-r+p-i-1\right) \cdots (k-r+1)=\frac{(k-r+p-i)!%
}{(k-r)!}$ possibilities. We conclude by summing.
\end{proof}

For $s=1$ we get the relation given by the authors \cite[eq 5]{BelBou12}.

An expression of the $s$-associated $r$-Stirling numbers of the second kind
in terms of the $s$-associated Stirling numbers can be deduced from equation
(\ref{hoba}), for $p=r$, as follows

\begin{equation}
\stirlings {n}{k}_{r}^{(s)}=\sum_{i=0}^{r}\frac{(k-i)!}{(k-r)!}\binom{r}{i%
}\binom{n-r}{i(s-1)}\frac{\left( i(s-1)\right) !}{\left( \left( s-1\right)
!\right) ^{i}}\stirlings {n-r-i(s-1)}{k-i}^{(s)}.  \label{gene2}
\end{equation}

Also, for $r=1$, we obtain the recurrence relation of the $s$-associated
Stirling numbers \cite[eq 4.1]{MR600368}.
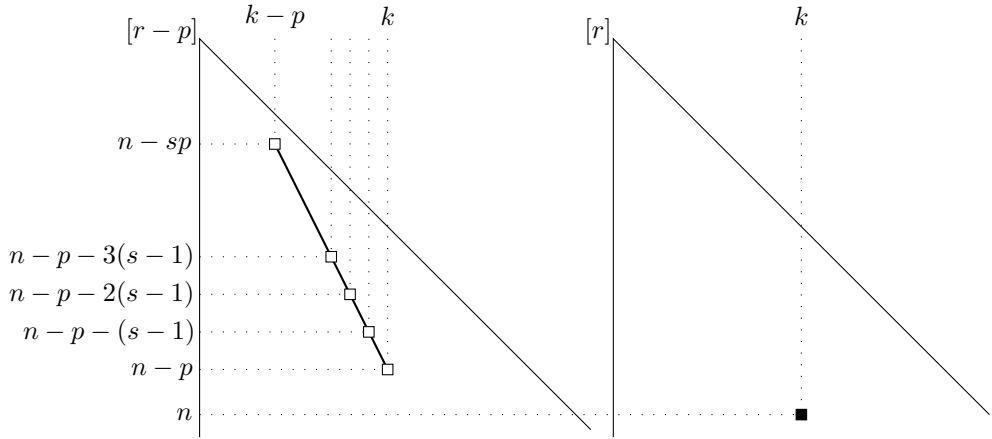
\begin{figure}[!h]
\centering
\begin{tikzpicture}
	\draw (0,-0.3)--(0,5)--(5.2,-0.2);% le triangle 
	\draw[thick] (2.5,0.6)--(1,3.6);
	%resultat point:
	\draw[thin,loosely dotted] (0,0.6)--(2.5,0.6);% la droite des n
	\draw[thin,loosely dotted] (2.5,5)--(2.5,0.6);% la droite des k
	\draw[fill=white] (2.43,0.53) rectangle(2.57,0.67);% rectangle des (n,k) au point 2.5,0.5
	\node  at (2.5,5.3) {$k$};% label k
	\node  at (-0.5,0.6) {$n-p$};% label n
	%\draw[thick,loosely dotted] (2.25,1.1)--(1,3.6);
	
	% deuxième poit:
	\draw[thin,loosely dotted] (0,1.1)--(2.25,1.1) ; % la droite n-p-(s-1)
	\draw[thin,loosely dotted] (2.25,5)--(2.25,1.1);% droite k-1
	\draw[fill=white] (2.18,1.03) rectangle(2.32,1.17);% rectangle des (n-p,k-1)
	%\node  at (2.5,5.3) {$k-1$};
	\node  at (-1.2,1.1) {$n-p-(s-1)$};% label n
	% Troisième poit
	\draw[thin,loosely dotted] (0,1.6)--(2.0,1.6) ; % la droite n-p-(s-1)
	\draw[thin,loosely dotted] (2.0,5)--(2.0,1.6);% droite k-1
	\draw[fill=white] (1.93,1.53) rectangle(2.07,1.67);% rectangle des (n-p,k-1)
	%\node  at (2.0,5.3) {$k-2$};
	\node  at (-1.3,2.1) {$n-p-3(s-1)$};% label n
	% quatrième poit:
	\draw[thin,loosely dotted] (0,2.1)--(1.75,2.1) ; % la droite n-p-(s-1)
	\draw[thin,loosely dotted] (1.75,5)--(1.75,2.1);% droite k-1
	\draw[fill=white] (1.68,2.03) rectangle(1.82,2.17);% rectangle des (n-p,k-1)
	%\node  at (1.75,5.3) {$k-3$};
	\node  at (-1.3,1.6) {$n-p-2(s-1)$};% label n
	% the last one
	\draw[thin,loosely dotted] (0,3.6)--(1,3.6) ; % la droite n-p-(s-1)
	\draw[thin,loosely dotted] (1,5)--(1,3.6);% droite k-1
	\draw[fill=white] (0.93,3.53) rectangle(1.07,3.67);% rectangle des (n-p,k-1)
	\node  at (1,5.3) {$k-p$};
	\node  at (-0.6,3.6) {$n-sp$};
	
	%%%%%%%%%%%%%%%%% tiangle 2%%%%%%%%%%%%%%%%%%%%
	\draw (5.5,-0.3)--(5.5,5)--(10.5,0);% le triangle 
	\draw[thin,loosely dotted] (0,0)--(8,0.0);% la droite des n
	\draw[thin,loosely dotted] (8,5)--(8,0.0);% la droite des k
	\draw[fill=black] (7.93,-0.07) rectangle(8.07,0.07);% rectangle des (n,k) au point 2.5,0.5
	\node  at (8,5.3) {$k$};% label k
	\node  at (-0.2,0.0) {$n$};% label n
	
	\node  at (5.3,5.1) {$[r]$};% label k
	\node  at (-0.5,5.1) {$[r-p]$};% label k

\end{tikzpicture}
\caption{Value of $s$-associated $r$-Stirling element (in black) as a inner product of a periodic sequence of elements of the $s$-associated $r-p$-Stirling table (in white) with a sequence deriving from binomial coefficient.}
\label{Tull}
\end{figure}
\begin{theorem}
Let $p,r,k$ and $n$ be nonnegative integers such that $p\leqslant r\leqslant
k$ and $n\geq sk$, we have%
\begin{equation}
\lah
{n}{k}_{r}^{(s)}=\sum\limits_{i=0}^{p}\sum_{j=i}^{n-sk}\frac{\left(
n-r\right) !}{\left( n-r-j+\left( s-1\right) p\right) !}\dbinom{p}{i}\dbinom{%
p+j-1}{j-i}\lah {n-sp-j}{k-p}_{r-p}^{(s)}s^{p-i}.
\label{TooLong}
\end{equation}
\end{theorem}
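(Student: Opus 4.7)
I would adapt the combinatorial split used for the earlier vertical recurrence (Theorem \ref{RecVer}): isolate the $p$ lists that contain the first $p$ elements, obtain a convolution identity, and then substitute the explicit formula (\ref{EXL}) for one factor to reach the stated form. Since $p \leq r$ and the elements $1, \ldots, r$ lie in $r$ distinct lists, the elements $1, \ldots, p$ determine $p$ of the $k$ ordered lists. Let $j \geq 0$ be such that these $p$ distinguished lists contain $sp + j$ elements altogether; then the $(s-1)p + j$ ``extra'' occupants must be drawn from $\{r+1, \ldots, n\}$, since the elements $p+1, \ldots, r$ are forced into the remaining $k-p$ lists so as to keep the first $r$ in distinct lists. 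This choice contributes a factor $\binom{n-r}{(s-1)p+j}$.

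The arrangement of the selected $sp+j$ elements into the $p$ distinguished lists, subject to the $s$-associated condition, is by definition $\lah{sp+j}{p}_{p}^{(s)}$. Independently, the remaining $n - sp - j$ elements form the other $k-p$ ordered lists with $p+1, \ldots, r$ in distinct lists and each list of size at least $s$, contributing $\lah{n-sp-j}{k-p}_{r-p}^{(s)}$. Summing over the admissible range $0 \leq j \leq n - sk$ yields the intermediate convolution
\begin{equation*}
\lah{n}{k}_{r}^{(s)} = \sum_{j=0}^{n-sk} \binom{n-r}{(s-1)p+j}\, \lah{sp+j}{p}_{p}^{(s)}\, \lah{n-sp-j}{k-p}_{r-p}^{(s)}.
\end{equation*}

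To reach the stated form I would substitute, via (\ref{EXL}) applied with $(n,k,r) \mapsto (sp+j,p,p)$, the expression
\begin{equation*}
\lah{sp+j}{p}_{p}^{(s)} = \bigl((s-1)p + j\bigr)! \sum_{i=0}^{p}\binom{p}{i}\binom{p+j-1}{p+i-1}s^{p-i},
\end{equation*}
then apply the symmetry $\binom{p+j-1}{p+i-1} = \binom{p+j-1}{j-i}$ (which vanishes for $j < i$, so after exchanging the summation order the inner $j$-sum may be taken from $j = i$), and collapse $\binom{n-r}{(s-1)p+j}\bigl((s-1)p+j\bigr)!$ into a falling-factorial expression in $(n-r)$.

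The main obstacle will be bookkeeping: aligning factorial denominators, binomial arguments, and summation indices through several reindexing steps. The combinatorial input is a direct generalization of the split used for Theorem \ref{RecVer}; once the intermediate convolution is in hand, the remaining work is a routine application of the explicit formula together with standard binomial manipulations.
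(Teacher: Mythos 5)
Your decomposition is the same one the paper uses: the authors likewise isolate the $p$ lists containing $1,\ldots ,p$, draw their extra occupants from the last $n-r$ elements, and sum over the total size; the only difference is that they keep the individual list sizes $i_{1},\ldots ,i_{p}$ and evaluate the inner sum $\sum (i_{1}+1)\cdots (i_{p}+1)$ directly by the nested-sum identity (\ref{Getit}), whereas you package that block as $\lah{sp+j}{p}_{p}^{(s)}$ and quote the $k=r$ case of (\ref{EXL}) (itself proved via (\ref{Getit})), so the two routes are essentially identical. One caveat when you do the deferred bookkeeping: $\binom{n-r}{(s-1)p+j}\bigl((s-1)p+j\bigr)!=\frac{(n-r)!}{\left(n-r-j-(s-1)p\right)!}$, so your argument yields $(n-r-j-(s-1)p)!$ in the denominator rather than the printed $(n-r-j+(s-1)p)!$; the printed sign appears to be a typo, since for $s=2$, $n=4$, $k=2$, $p=r=1$ the printed right-hand side equals $1$ while $\lah{4}{2}_{1}^{(2)}=12$, and only the minus-sign version reduces to (\ref{RecVer}) at $p=r=1$. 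So your plan is sound and in fact produces the correct identity; just do not expect to match the statement verbatim.
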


\begin{proof}
Let us consider the $p$ first elements, they are in $p$ distinct lists with $%
i_{j}$ $\left( i_{j}\geqslant s-1;\text{ }j=1..p\right) $ other elements,
such that $i_{1}+i_{2}+\cdots i_{p}=j$ $\left( \left( s-1\right) p\leqslant
j\leqslant n-p-s\left( k-p\right) \right) .$ Then there are $\binom{n-r}{%
i_{1},i_{2},\ldots ,i_{p},n-r-j}=\binom{n-r}{i_{1}}\binom{n-r-i_{1}}{i_{2}}%
\cdots \binom{n-r-i_{1}-i_{2}-\ldots -i_{p-1}}{i_{p}}$ ways to choose the $%
i_{1},i_{2},\ldots ,i_{p}$ elements and $(i_{1}+1)!(i_{2}+1)!\cdots
(i_{p}+1)!$ ways to constitute the $p$ lists. Now, it remains to distribute
the $n-p-j$ remaining elements into $k-p$ lists such that each list have at
least $s$ elements \ and the $r-p$ elements are in distinct lists, which
gives $\lah {n-p-j}{k-p}_{r-p}^{(s)}$ possibilities. we
sum over all value of $j$ we get 
\begin{equation*}
\lah {n}{k}_{r}^{(s)}=\sum_{j=\left( s-1\right)
p}^{n-p-s\left( k-p\right) }\sum\limits_{\substack{ i_{1}+i_{2}+\cdots
+i_{p}=j  \\ i_{l\geqslant s-1}}}(i_{1}+1)!(i_{2}+1)!\cdots (i_{p}+1)!\binom{%
n-r}{i_{1},i_{2},\ldots ,i_{p},n-r-j}\lah
{n-p-j}{k-p}_{r-p}^{(s)},
\end{equation*}%
the inner summations can be evaluated using (\ref{Getit}). This gives the
result.
\end{proof}

For $p=r$, we get an expression of the $s$-associated $r$-Lah numbers in
terms of the $s$-associated Lah numbers%
\begin{equation}
\lah
{n}{k}_{r}^{(s)}=\sum_{i=0}^{r}\sum_{j=i}^{n-sk}\dbinom{r}{i}\dbinom{%
r+j-1}{j-i}\frac{\left( n-r\right) !}{\left( n-j+\left( s-2\right) r\right) !%
}s^{r-i}\lah {n-sr-j}{k-r}^{(s)}.
\end{equation}

Also, For $r=1,$ we get relation (\ref{RecVer}), and for $s=1$ we get the
identity \cite[eq 7]{BelBel13}.

\section{Cross recurrence relations}

From equations (\ref{S1rsfiS1p}) and (\ref{TooLong}), for $p=1$, we get some
vertical cross recurrence relations.

\begin{corollary}
We have%
\begin{equation}
\stirlingf {n}{k}_{r}^{(s)}=\sum_{i=s-1}^{n-s(k-1)-1}\frac{\left(
n-r\right) !}{\left( n-r-i\right) !}\stirlingf {n-i-1}{k-1}_{r-1}^{(s)},
\end{equation}%
\begin{equation}
\lah {n}{k}_{r}^{(s)}=\sum_{i=s-1}^{n-s(k-1)-1}(i+1)\frac{%
\left( n-r\right) !}{\left( n-r-i\right) !}\lah
{n-i-1}{k-1}_{r-1}^{(s)}.
\end{equation}
\end{corollary}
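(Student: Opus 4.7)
The plan is to obtain both identities by specializing the general convolutions (\ref{S1rsfiS1p}) and (\ref{TooLong}) to $p=1$, as suggested by the phrasing that introduces the corollary. As an independent sanity check I would also give a short direct combinatorial argument built around the cycle (or list) containing element $1$.

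For the Stirling identity I would substitute $p=1$ directly into (\ref{S1rsfiS1p}). The binomial coefficient $\binom{i-p(s-2)-1}{p-1}$ collapses to $\binom{i-(s-2)-1}{0}=1$, the summation range becomes $s-1\le i\le n-s(k-1)-1$, and the Stirling factor $\stirlingf{n-p-i}{k-p}_{r-p}^{(s)}$ becomes $\stirlingf{n-i-1}{k-1}_{r-1}^{(s)}$. The resulting expression is already in the asserted form, so no further simplification is required.

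For the $r$-Lah identity I would substitute $p=1$ in (\ref{TooLong}), which leaves a double sum whose outer index $i$ takes only the values $0$ and $1$. Since $\binom{1}{i}=1$ and $\binom{j}{j-i}$ equals $1$ when $i=0$ and equals $j$ when $i=1$, the $i=0$ contribution carries a factor $s$ and the $i=1$ contribution carries a factor $j$; pooling them gives a combined factor $s+j$ in each summand of the remaining single sum over $j$. I would then reindex by $i:=j+s-1$, which turns the bounds into $s-1\le i\le n-s(k-1)-1$, sends $n-sp-j=n-s-j$ to $n-i-1$, sends $s+j$ to $i+1$, and sends the denominator factorial to $(n-r-i)!$. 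Collecting these substitutions delivers the stated formula with coefficient $(i+1)\,(n-r)!/(n-r-i)!$.

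The only mildly delicate step is the Lah case: I have to recognise that the two outer-index terms in (\ref{TooLong}) combine into the single factor $(s+j)\mapsto(i+1)$ after the shift, and I have to double-check that the lower bound $j=1$ for the $i=1$ piece harmonises with the extra $j=0$ contribution from the $i=0$ piece once the shift is applied. This is the one book-keeping obstacle, and it is exactly what the combinatorial backup makes transparent: fix the cycle or list containing element $1$; it has $i+1\ge s$ elements; choose the remaining $i$ elements from $\{r+1,\dots,n\}$ in $\binom{n-r}{i}=(n-r)!/(i!(n-r-i)!)$ ways; arrange them as a cycle in $i!$ ways or as an ordered list in $(i+1)!$ ways; finally distribute the remaining $n-i-1$ elements into $k-1$ cycles or lists with the $r-1$ other distinguished elements in distinct parts, giving $\stirlingf{n-i-1}{k-1}_{r-1}^{(s)}$ or $\lah{n-i-1}{k-1}_{r-1}^{(s)}$ completions. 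Summing over $i=s-1,\dots,n-s(k-1)-1$ reproduces both asserted recurrences and confirms the specialisation.
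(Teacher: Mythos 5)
Your route is the same as the paper's: the corollary is obtained exactly by setting $p=1$ in (\ref{S1rsfiS1p}) and (\ref{TooLong}), and your bookkeeping for the first identity and the pooling $(s+j)\mapsto(i+1)$ in the second are correct. One caveat worth noting: the denominator in (\ref{TooLong}) as printed is $\left(n-r-j+(s-1)p\right)!$, under which your shift $i=j+s-1$ would literally give $\left(n-r-i+2(s-1)\right)!$ rather than $\left(n-r-i\right)!$; the sign there is a misprint (the paper's own derivation of (\ref{TooLong}) yields $\left(n-r-j-(s-1)p\right)!$), so your reduction tacitly uses the corrected formula, and your independent combinatorial check (the cycle or list through a distinguished element with $i$ chosen companions, $i!$ cyclic or $(i+1)!$ linear orders, then $\stirlingf{n-i-1}{k-1}_{r-1}^{(s)}$ or $\lah{n-i-1}{k-1}_{r-1}^{(s)}$ completions) correctly confirms the stated identities.
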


For $r=1,$ we get relation (\ref{RecVer}) and for $s=1$ we get the identity
given by the authors in \cite{BelBou14}.

\begin{theorem}
Let $r,k,n$ be nonnegative integers such that $n\geq sk$, we have%
\begin{equation}
\stirlings {n}{k}_{r}^{(s)}=\binom{n-r}{s-1}\stirlings
{n-s}{k-1}_{r-1}^{(s)}+\left( k-r+1\right) \stirlings {n-1}{k}_{r-1}^{(s)}.
\end{equation}
\end{theorem}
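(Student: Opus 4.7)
The plan is to give a direct combinatorial argument by conditioning on the subset that contains the distinguished element $r$. Every partition counted by $\stirlings{n}{k}_r^{(s)}$ places the $r$ first elements in $r$ distinct blocks, so in particular $r$ sits in a block that contains none of $1,\ldots,r-1$, and this block has at least $s$ elements. I will split the count into two cases: the block of $r$ has \emph{exactly} $s$ elements, or it has \emph{more than} $s$ elements. These cases are clearly disjoint and exhaustive.

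For the first case, the other $s-1$ members of $r$'s block must come from $\{r+1,\ldots,n\}$, giving $\binom{n-r}{s-1}$ choices. Once $r$'s block is fixed, the remaining $n-s$ elements must be partitioned into the $k-1$ remaining blocks so that each block has size at least $s$ and the first $r-1$ elements lie in distinct blocks; by definition this can be done in $\stirlings{n-s}{k-1}_{r-1}^{(s)}$ ways. This yields the first summand $\binom{n-r}{s-1}\stirlings{n-s}{k-1}_{r-1}^{(s)}$.

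For the second case, I will proceed by a ``remove and reinsert'' bijection: delete $r$ from its block. Since that block had more than $s$ elements, it still has at least $s$ after deletion, so what remains is a valid partition of $Z_n\setminus\{r\}$ (renamed as $n-1$ elements) into $k$ blocks with each block of size $\geq s$ and with $1,\ldots,r-1$ in distinct blocks; there are $\stirlings{n-1}{k}_{r-1}^{(s)}$ such partitions. Conversely, to recover the original partition we must insert $r$ back into a block that contains none of $1,\ldots,r-1$. Since those $r-1$ elements occupy $r-1$ distinct blocks among the $k$ blocks, there are exactly $k-(r-1)=k-r+1$ admissible blocks, and each insertion yields a block of size at least $s+1$, hence strictly greater than $s$ as required. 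This case therefore contributes $(k-r+1)\stirlings{n-1}{k}_{r-1}^{(s)}$.

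The main subtlety — and the only place where one has to be careful — is the bookkeeping in the second case: one must verify that the ``remove $r$'' operation really lands in the set counted by $\stirlings{n-1}{k}_{r-1}^{(s)}$ (which is why the cardinality condition ``strictly greater than $s$'' is essential, so that the size-$\geq s$ constraint survives deletion), and that the number of admissible reinsertion slots is exactly $k-r+1$ (not $k-r$ or $k$). Once these two points are settled, adding the two contributions gives precisely the stated identity.
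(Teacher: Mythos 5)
Your proof is correct and follows essentially the same route as the paper: both condition on the block of the $r^{th}$ element, counting separately the case where that block has exactly $s$ elements (giving $\binom{n-r}{s-1}\stirlings{n-s}{k-1}_{r-1}^{(s)}$) and the case where it is larger, handled by deleting $r$ and reinserting it into one of the $k-(r-1)$ admissible blocks. Your write-up is in fact slightly more careful than the paper's about why the deletion step stays within the size-$\geqslant s$ constraint.
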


\begin{proof}
Let us consider the $r^{th}$ elements. If it belongs to a group containing
exactly $s$ elements, we have $\binom{n-r}{s-1}$ ways to choose the
remaining $\left( s-1\right) $ elements and $\stirlings
{n-s}{k-1}_{r-1}^{(s)}$ ways to partition the remaining $\left( n-s\right) $
elements on $\left( k-1\right) $ parts such that the $\left( r-1\right) $
first elements are in distinct parts, and each parts, have at least $s$
elements. Thus we get $\binom{n-r}{s-1}\stirlings {n-s}{k-1}_{r-1}^{(s)}$
possibilities. Else, we have $\stirlings {n-1}{k}_{r-1}^{(s)}$
possibilities to partition the remaining $\left( n-1\right) $ elements into $%
k$ parts such that the $\left( r-1\right) $ first elements are in distinct
parts, and each parts, have at least $s$ elements, then add the $r^{th}$
elements to on of the $(k-(r-1))$ parts and we have $\left( k-r+1\right) $
possibilities. It gives $\left( k-r+1\right) \stirlings
{n-1}{k}_{r-1}^{(s)}$.
\end{proof}

For $s=1$ we get the cross recurrence \cite[eq 3]{BelBou12} and for $r=1$ we
get the recurrence relation of the $s$-associated Stirling numbers of the
second kind \cite[eq 4.1]{MR600368}.

\section{Convolution identities (revisited)}

The $s$-associated $r$-Stirling numbers of the three kinds can be expressed
as a convolution using the binomial coefficients.

\begin{theorem}
Let $r,k$ and $n$ be nonnegative integers such that $n\geq sk$ with $%
k_{1}+\cdots +k_{p}=k$ and $r_{1}+\cdots +r_{p}=r$, we have%
\begin{equation}
\binom{k}{k_{1},\ldots ,k_{p}}\stirlingf {n+r}{k+r}_{r}^{(s)}=\sum\limits 
_{\substack{ l_{1}+\cdots +l_{p}=n  \\ l_{i}\geqslant sk_{i}+(s-1)r_{i}}}%
\binom{n}{l_{1},\ldots ,l_{p}}\stirlingf {l_{1}+r_{1}}{k_{1}+r_{1}}%
_{r_{1}}^{(s)}\cdots \stirlingf {l_{p}+r_{p}}{k_{p}+r_{p}}_{r_{p}}^{(s)}.
\label{didi}
\end{equation}
\end{theorem}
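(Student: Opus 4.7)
My plan is to prove \eqref{didi} bijectively, exploiting the combinatorial definition of $\stirlingf{\cdot}{\cdot}_{r}^{(s)}$. Fix once and for all a decomposition of the special elements $\{1,\ldots,r\}$ into disjoint blocks $R_{1},\ldots,R_{p}$ with $|R_{i}|=r_{i}$, for instance $R_{i}=\{r_{1}+\cdots+r_{i-1}+1,\ldots,r_{1}+\cdots+r_{i}\}$. This convention will let me talk unambiguously about which ``group'' a special element belongs to.

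I would first reinterpret the left-hand side as counting pairs $(\pi,f)$, where $\pi$ is a permutation of $Z_{n+r}$ contributing to $\stirlingf{n+r}{k+r}_{r}^{(s)}$ and $f$ is an ordered distribution of the $k$ cycles of $\pi$ containing no special element into $p$ boxes of sizes $k_{1},\ldots,k_{p}$, so that the multinomial $\binom{k}{k_{1},\ldots,k_{p}}$ accounts precisely for this distribution. The right-hand side counts tuples $(L_{1},\ldots,L_{p};\sigma_{1},\ldots,\sigma_{p})$, where $(L_{1},\ldots,L_{p})$ is an ordered partition of $\{r+1,\ldots,n+r\}$ with $|L_{i}|=l_{i}$ (giving the factor $\binom{n}{l_{1},\ldots,l_{p}}$), and each $\sigma_{i}$ is a permutation of $R_{i}\cup L_{i}$ with $k_{i}+r_{i}$ cycles of size at least $s$, having the elements of $R_{i}$ in distinct cycles; the factor $\stirlingf{l_{i}+r_{i}}{k_{i}+r_{i}}_{r_{i}}^{(s)}$ is legitimate because, for a ground set of fixed cardinality with a fixed marked subset, the number of admissible permutations is invariant under relabeling.

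The bijection would then proceed as follows: from $(\pi,f)$, assign each of the $r$ special cycles of $\pi$ to the unique box whose block $R_{i}$ contains its special element, thereby extending $f$ to a distribution of all $k+r$ cycles into boxes of sizes $k_{i}+r_{i}$. Taking $L_{i}$ to be the set of non-special elements in cycles assigned to box $i$, and $\sigma_{i}$ to be the restriction of $\pi$ to $R_{i}\cup L_{i}$, produces a valid RHS datum; the inverse concatenates the $\sigma_{i}$'s into a single permutation on $Z_{n+r}$ and recovers $f$ from the box-label of each non-special cycle. The bound $l_{i}\geq sk_{i}+(s-1)r_{i}$ then emerges automatically, since each of the $k_{i}$ non-special cycles assigned to box $i$ contributes at least $s$ non-special elements while each of the $r_{i}$ special cycles contributes one special element together with at least $s-1$ non-special ones, which is precisely the range in the sum.

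The main obstacle I anticipate is purely bookkeeping: one must verify that the special-cycle assignment is forced by the fixed decomposition $R_{1},\ldots,R_{p}$, so that both the ordered partition $(L_{i})$ and the original distribution $f$ are recovered unambiguously from the RHS datum, and that the induced $\sigma_{i}$ does satisfy all the conditions defining $\stirlingf{l_{i}+r_{i}}{k_{i}+r_{i}}_{r_{i}}^{(s)}$. Once this is checked, summing over all admissible $(l_{1},\ldots,l_{p})$ yields the identity directly.
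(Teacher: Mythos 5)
Your proof is correct and takes essentially the same route as the paper: the paper also double-counts ``colored'' permutations of $Z_{n+r}$ with $k+r$ cycles, where $\binom{k}{k_{1},\ldots ,k_{p}}$ distributes the $k$ non-special cycles into the $p$ color classes and each special cycle is forced into the class of its special element, exactly as in your assignment via the blocks $R_{1},\ldots ,R_{p}$. Your explicit bijection, the relabeling remark justifying the factors $\stirlingf{l_{i}+r_{i}}{k_{i}+r_{i}}_{r_{i}}^{(s)}$, and the derivation of the bound $l_{i}\geqslant sk_{i}+(s-1)r_{i}$ merely spell out details the paper leaves implicit.
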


\begin{proof}
We consider permutations of $Z_{n+r}$ with $k+r$ cycles such that the $r$
first elements are in distinct cycles and each cycle has at least $s$
elements and we have $\stirlingf {n+r}{k+r}_{r}^{(s)}$ possibilities. We
color the cycles with $p$ colors such that each $r_{i}$ cycles containing
the $r_{i}$ elements with $k_{i}$ other cycles have the same color, thus we
choose the $k_{i}$ cycles and we have $\binom{k}{k_{1},\ldots ,k_{p}}$
possibilities this is to choose the $l_{i}$ elements that have the same
color of the $r_{i}$ first and we have $\binom{n}{l_{1},\ldots ,l_{p}}$
possibilities, then consider all the permutations of the $l_{i}+r_{i}$
elements with $k_{i}+r_{i}$ cycles such that the $r_{i}$ elements are in
distinct cycles and each cycle has at least $s$ element and we have $\stirlingf {l_{i}+r_{i}}{k_{i}+r_{i}}_{ri}^{(s)}$ ways to do it. Summing over all
possible values of $l_{i}$ gives the result.
\end{proof}

\begin{theorem}
Let $r,k$ and $n$ be nonnegative integers such that $n\geq sk$ with $%
k_{1}+\cdots +k_{p}=k$ and $r_{1}+\cdots +r_{p}=r$, The $s$-associated $r$%
-Stirling numbers of the second kind satisfy%
\begin{equation}
\binom{k}{k_{1},\ldots ,k_{p}}\stirlings {n+r}{k+r}_{r}^{(s)}=\sum\limits 
_{\substack{ l_{1}+\cdots +l_{p}=n  \\ l_{i}\geqslant sk_{i}+(s-1)r_{i}}}%
\binom{n}{l_{1},\ldots ,l_{p}}\stirlings
{l_{1}+r_{1}}{k_{1}+r_{1}}_{r_{1}}^{(s)}\cdots \stirlings
{l_{p}+r_{p}}{k_{p}+r_{p}}_{r_{p}}^{(s)}.  \label{dd}
\end{equation}
\end{theorem}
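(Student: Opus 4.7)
The plan is to give a direct combinatorial double-counting proof that is the exact analog of the one given for Stirling numbers of the first kind in the previous theorem, with cycles replaced by blocks of a set partition. I will count the same set of configurations in two ways and equate the results.

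The set in question consists of pairs $(\pi,c)$, where $\pi$ is a partition of $Z_{n+r}$ into $k+r$ blocks such that the first $r$ elements lie in distinct blocks and every block has at least $s$ elements, and $c$ is a coloring of the blocks of $\pi$ by the colors $1,\ldots,p$ satisfying: color $i$ is used on exactly $k_i+r_i$ blocks, and the $r_i$ blocks of color $i$ that contain a distinguished element are precisely those containing the elements $r_1+\cdots+r_{i-1}+1,\ldots,r_1+\cdots+r_i$. Counting this set first by partition-then-color: the partition $\pi$ can be chosen in $\stirlings{n+r}{k+r}_r^{(s)}$ ways, after which the colors of the $r$ distinguished blocks are forced, and among the remaining $k$ non-distinguished blocks one selects which $k_i$ receive color $i$, yielding $\binom{k}{k_1,\ldots,k_p}$ choices. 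This produces the left-hand side.

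Counting the same set by color-then-partition: first split the $n$ non-distinguished elements into groups of sizes $l_1,\ldots,l_p$ (with $l_1+\cdots+l_p=n$), which can be done in $\binom{n}{l_1,\ldots,l_p}$ ways. For each color $i$, the $l_i+r_i$ elements of that color must be partitioned into $k_i+r_i$ blocks so that the $r_i$ distinguished elements lie in distinct blocks and each block has at least $s$ elements; this contributes $\stirlings{l_i+r_i}{k_i+r_i}_{r_i}^{(s)}$. Taking the product over $i$ and summing over all compositions of $n$ yields the right-hand side.

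The only bookkeeping detail is the lower bound $l_i\geqslant sk_i+(s-1)r_i$ on the summation range, which is equivalent to $l_i+r_i\geqslant s(k_i+r_i)$; this matches the support of $\stirlings{l_i+r_i}{k_i+r_i}_{r_i}^{(s)}$, so adding or removing terms outside this range does not change the sum. I expect no real obstacle in this proof, since it is a transparent adaptation of the previous theorem, the only substitution being the use of set partitions (with the $s$-associated $r$-Stirling numbers of the second kind) in place of permutations with cycles.
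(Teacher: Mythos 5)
Your proof is correct and is precisely the argument the paper intends: the paper's own proof of this theorem simply says to adapt the bijective coloring/double-counting argument given for the first-kind identity (\ref{didi}), replacing cycles by blocks, which is exactly what you carried out (including the correct observation that $l_i\geqslant sk_i+(s-1)r_i$ is equivalent to $l_i+r_i\geqslant s(k_i+r_i)$). No discrepancy; your write-up is just a more explicit version of the paper's sketch.
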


\begin{proof}
We use an adapted analogous bijective proof as for the identity (\ref{didi}).
\end{proof}

Relations (\ref{didi}) and (\ref{dd}) extend those given by the others \cite[%
Eq 8, Eq 12]{BelBou12} to the $s$-associated situation.

\begin{theorem}
Let $r,k$ and $n$ be nonnegative integers such that $n\geq sk$ with $%
k_{1}+\cdots +k_{p}=k$ and $r_{1}+\cdots +r_{p}=r$, The $s$-associated $r$%
-Lah numbers satisfy%
\begin{equation}
\binom{k}{k_{1},\ldots ,k_{p}}\lah
{n+r}{k+r}_{r}^{(s)}=\sum\limits_{\substack{ l_{1}+\cdots +l_{p}=n  \\ %
l_{i}\geqslant sk_{i}+(s-1)r_{i}}}\binom{n}{l_{1},\ldots ,l_{p}}%
\lah {l_{1}+r_{1}}{k_{1}+r_{1}}_{r_{1}}^{(s)}\cdots
\lah {l_{p}+r_{p}}{k_{p}+r_{p}}_{r_{p}}^{(s)}.
\end{equation}
\end{theorem}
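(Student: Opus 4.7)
The plan is to mimic the bijective/combinatorial arguments used to prove the two preceding convolution identities \eqref{didi} and \eqref{dd}, adapting them to the ordered-list setting. The only essential change is bookkeeping: a list is an \emph{ordered} sequence, not just a cycle or an unordered part, so we must be careful that the restriction ``at least $s$ elements per list'' is maintained after the coloring step.

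First I would fix a partition of $Z_{n+r}$ into $k+r$ ordered lists in which the $r$ distinguished elements $1,\ldots,r$ appear in distinct lists and each list contains at least $s$ elements; the number of such configurations is by definition $\lah{n+r}{k+r}_{r}^{(s)}$, which accounts for the left-hand side once we multiply by the multinomial coefficient chosen below. Then I would assign one of $p$ colors to every list, subject to the constraint that the $r_i$ lists headed by a prescribed block of distinguished elements receive color $i$, and that in total exactly $k_i$ of the remaining $k$ non-distinguished-headed lists also receive color $i$. The number of ways to choose which non-distinguished lists receive each color is $\binom{k}{k_1,\ldots,k_p}$, giving the factor in front of the left-hand side.

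Next I would go the other way: the same colored object is obtained by (i) partitioning the $n$ non-distinguished elements into $p$ groups of sizes $l_1,\ldots,l_p$, which contributes $\binom{n}{l_1,\ldots,l_p}$; (ii) merging each block of $l_i$ elements with the corresponding $r_i$ distinguished elements, and forming an ordered-list partition with $k_i+r_i$ parts in which the $r_i$ distinguished elements are in distinct lists and every list has at least $s$ elements; this contributes $\lah{l_i+r_i}{k_i+r_i}_{r_i}^{(s)}$. The size constraint $l_i\geqslant sk_i+(s-1)r_i$ is exactly the requirement that the $k_i$ ``free'' lists can host at least $s$ elements each and that the $r_i$ ``distinguished-headed'' lists can host at least $s-1$ additional elements beyond their distinguished head. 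Summing over all valid $(l_1,\ldots,l_p)$ yields the right-hand side; equating the two counts gives the identity.

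The step requiring the most care is the size-constraint verification: one must check that $l_i\geqslant sk_i+(s-1)r_i$ is both necessary (otherwise the factor $\lah{l_i+r_i}{k_i+r_i}_{r_i}^{(s)}$ vanishes anyway) and automatically met by any valid configuration in the forward direction. Once this is confirmed, the double-counting matches term-by-term and the result follows. I do not expect any genuine obstacle beyond this accounting, since the proof parallels the first-kind and second-kind versions already established, with the ordered-list refinement of Comtet's generalization inherited from the definition of $\lah{\cdot}{\cdot}_{r}^{(s)}$.
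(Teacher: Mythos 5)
Your proposal is correct and follows essentially the same route as the paper: the paper proves the first-kind identity (\ref{didi}) by exactly this coloring/double-counting of the parts containing the distinguished elements, and for the Lah case it simply invokes "an adapted analogous bijective proof," which is what you carry out. Your only slip is cosmetic — the distinguished elements need not be \emph{heads} of their lists, only members of distinct lists — and this does not affect the argument.
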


\begin{proof}
We use an adapted analogous bijective proof as for the identity (\ref{didi}).
\end{proof}

For $s=1$, we get 
\begin{equation}
\binom{k}{k_{1},\ldots ,k_{p}}\lah
{n+r}{k+r}_{r}=\sum\limits_{\substack{ l_{1}+\cdots +l_{p}=n  \\ %
l_{i}\geqslant sk_{i}+(s-1)r_{i}}}\binom{n}{l_{1},\ldots ,l_{p}}%
\lah {l_{1}+r_{1}}{k_{1}+r_{1}}_{r_{1}}\cdots
\lah {l_{p}+r_{p}}{k_{p}+r_{p}}_{r_{p}}.
\end{equation}

\section{Generating functions}

The $s$-associated $r$-Stirling numbers of the first kind have the following
exponential generating function.

\begin{theorem}
We have%
\begin{equation}
\sum\limits_{n\geq sk+(s-1)r}\stirlingf {n+r}{k+r}_{r}^{(s)}\frac{x^{n}}{n!}=%
\frac{\left( -1\right) ^{k}}{k!}\left( \ln \left( 1-x\right)
+\sum_{i=1}^{s-1}\frac{x^{i}}{i}\right) ^{k}\left( \frac{x^{s-1}}{1-x}%
\right) ^{r}.  \label{generatingS1}
\end{equation}
\end{theorem}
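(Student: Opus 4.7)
The plan is to derive the exponential generating function as a product of two simpler generating functions, using the convolution identity (\ref{didi}) with $p=2$.

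First, I would apply identity (\ref{didi}) with the split $p=2$, $(k_1,k_2)=(k,0)$, and $(r_1,r_2)=(0,r)$. Since $\binom{k}{k,0}=1$ and $\stirlingf{l_1}{k}_0^{(s)}=\stirlingf{l_1}{k}^{(s)}$, this yields the binomial convolution
\begin{equation*}
\stirlingf{n+r}{k+r}_r^{(s)} = \sum_{\substack{l_1+l_2=n \\ l_1\geq sk,\ l_2\geq (s-1)r}} \binom{n}{l_1,l_2}\stirlingf{l_1}{k}^{(s)}\stirlingf{l_2+r}{r}_r^{(s)}.
\end{equation*}
Because this is precisely the coefficient shape of the product of two exponential generating functions, the EGF of the left-hand side factors as the product of the EGFs of $\stirlingf{n}{k}^{(s)}$ and of $n\mapsto\stirlingf{n+r}{r}_r^{(s)}$.

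Next, I would identify the two factors. The first is read off directly from (\ref{genes1}):
\begin{equation*}
\sum_{l_1\geq sk}\stirlingf{l_1}{k}^{(s)}\frac{x^{l_1}}{l_1!}=\frac{1}{k!}\left(-\ln(1-x)-\sum_{i=1}^{s-1}\frac{x^i}{i}\right)^{k}=\frac{(-1)^k}{k!}\left(\ln(1-x)+\sum_{i=1}^{s-1}\frac{x^i}{i}\right)^{k}.
\end{equation*}
For the second factor, I would use the boundary-case explicit formula (\ref{diga}), which gives
\begin{equation*}
\stirlingf{n+r}{r}_r^{(s)}=n!\,\binom{n-(s-2)r-1}{r-1},
\end{equation*}
so that
\begin{equation*}
\sum_{n\geq (s-1)r}\stirlingf{n+r}{r}_r^{(s)}\frac{x^n}{n!}=\sum_{n\geq (s-1)r}\binom{n-(s-2)r-1}{r-1}x^n.
\end{equation*}
Substituting $m=n-(s-1)r$ transforms the index shift into $\binom{m+r-1}{r-1}$, and the elementary identity $\sum_{m\geq 0}\binom{m+r-1}{r-1}x^m=(1-x)^{-r}$ (invoked earlier in the paper via (\ref{y})) collapses this to $\left(\frac{x^{s-1}}{1-x}\right)^{r}$.

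Multiplying the two EGFs gives precisely (\ref{generatingS1}). The main technical point to verify is the lower summation bound: we need $l_1\geq sk$ (from the $k$ cycles of size $\geq s$ contributed by the first factor) and $l_2\geq (s-1)r$ (from the $r$ distinguished cycles needing $s-1$ non-distinguished elements each), which together force $n\geq sk+(s-1)r$, matching the index range in the statement. I do not expect a serious obstacle; the only subtle step is the algebraic manipulation showing that the shifted binomial in the second factor's coefficient rearranges cleanly to $\binom{m+r-1}{r-1}$.
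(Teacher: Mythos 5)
Your proposal is correct and follows essentially the same route as the paper: the paper's proof starts from identity (\ref{S1rsS1s}), which — in view of (\ref{diga}) — is exactly the binomial convolution you extract from (\ref{didi}) with $(k_1,k_2)=(k,0)$ and $(r_1,r_2)=(0,r)$, and it then, just as you do, factors the generating function into the factor from (\ref{genes1}) times the series $\sum_{i}\binom{i-r(s-2)-1}{r-1}x^{i}=\left(\frac{x^{s-1}}{1-x}\right)^{r}$ evaluated via (\ref{y}). The only difference is which previously established identity is invoked to supply that convolution, a cosmetic rather than substantive divergence.
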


\begin{proof}
Using the identity (\ref{S1rsS1s}), we get%
\begin{equation*}
\sum\limits_{n\geq sk+(s-1)r}\stirlingf {n+r}{k+r}_{r}^{(s)}\frac{x^{n}}{n!}%
=\sum\limits_{i}\dbinom{i-r\left( s-2\right) -1}{r-1}x^{i}\sum\limits_{n\geq
sk+(s-1)r}\stirlingf {n-i}{k}^{(s)}\frac{x^{n-i}}{\left( n-i\right) !},
\end{equation*}%
from (\ref{genes1}), we obtain%
\begin{equation*}
\sum\limits_{n\geq sk+(s-1)r}\stirlingf {n+r}{k+r}_{r}^{(s)}\frac{x^{n}}{n!}=%
\frac{1}{k!}\left( \ln \left( \frac{1}{1-x}\right) -\sum\limits_{i=1}^{s-1}%
\frac{x^{i}}{i}\right) ^{k}\sum\limits_{i}\dbinom{i-r\left( s-2\right) -1}{%
r-1}x^{i},
\end{equation*}%
using relation (\ref{y}) we get the result.
\end{proof}

The above theorem implies the double generating function.

\begin{theorem}
The $s$-associated $r$-Stirling numbers of the first kind satisfy%
\begin{equation}
\sum\limits_{n,k}\stirlingf {n+r}{k+r}_{r}^{(s)}y^{k}\frac{x^{n}}{n!}=\exp
\left( y\ln \left( \frac{1}{{1-x}}\right) -y\sum_{i=1}^{s-1}\frac{x^{i}}{i}%
\right) \left( \frac{x^{s-1}}{1-x}\right) ^{r}.
\end{equation}
\end{theorem}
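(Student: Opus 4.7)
The plan is to deduce this double generating function directly from the single-variable generating function established in the previous theorem (equation \ref{generatingS1}) by multiplying by $y^k$ and summing over $k\geq 0$. Since the previous theorem already encapsulates the hard combinatorial input, the present statement is essentially a formal manipulation: recognizing that the factor $\frac{(-1)^k}{k!}\bigl(\ln(1-x)+\sum_{i=1}^{s-1}\frac{x^i}{i}\bigr)^k$ is the general term of an exponential series in $y$.

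First I would write, for each fixed $k$, the identity from equation \ref{generatingS1}:
\begin{equation*}
\sum_{n\geq sk+(s-1)r}\stirlingf{n+r}{k+r}_r^{(s)}\frac{x^{n}}{n!}=\frac{(-1)^{k}}{k!}\Bigl(\ln(1-x)+\sum_{i=1}^{s-1}\frac{x^{i}}{i}\Bigr)^{k}\Bigl(\frac{x^{s-1}}{1-x}\Bigr)^{r}.
\end{equation*}
Next I would multiply both sides by $y^k$ and sum over $k\geq 0$, interchanging the order of summation on the left-hand side (which is legitimate in the ring of formal power series in $x$ and $y$, since for each $n$ only finitely many $k$ contribute a nonzero term, namely those with $k+r\leq n+r$). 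This yields
\begin{equation*}
\sum_{n,k}\stirlingf{n+r}{k+r}_r^{(s)}y^{k}\frac{x^{n}}{n!}=\Bigl(\frac{x^{s-1}}{1-x}\Bigr)^{r}\sum_{k\geq 0}\frac{1}{k!}\Bigl(-y\Bigl(\ln(1-x)+\sum_{i=1}^{s-1}\frac{x^{i}}{i}\Bigr)\Bigr)^{k}.
\end{equation*}

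Finally I would recognize the inner sum as the exponential series
\begin{equation*}
\exp\Bigl(-y\ln(1-x)-y\sum_{i=1}^{s-1}\frac{x^{i}}{i}\Bigr)=\exp\Bigl(y\ln\frac{1}{1-x}-y\sum_{i=1}^{s-1}\frac{x^{i}}{i}\Bigr),
\end{equation*}
which, combined with the prefactor $\bigl(x^{s-1}/(1-x)\bigr)^{r}$, matches the claimed closed form exactly.

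There is no real obstacle here beyond bookkeeping: the only subtlety is the exchange of summations, which is harmless at the level of formal power series since the $x$-valuation of $\bigl(\ln(1-x)+\sum_{i=1}^{s-1}x^{i}/i\bigr)^{k}$ grows with $k$ (it has valuation at least $sk$), so the sum over $k$ converges coefficient-wise in $x$ for every fixed $y$. Once that is noted, the proof is a one-line substitution.
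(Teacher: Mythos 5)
Your argument is exactly the paper's: the paper also proves this by summing the single-variable generating function (\ref{generatingS1}) multiplied by $y^k$ over $k$, interchanging the order of summation, and recognizing the exponential series. Your additional remark about the $x$-valuation justifying the interchange is a harmless elaboration of the same one-line proof.
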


\begin{proof}
Interchanging the order of summation and using equation (\ref{generatingS1})
we get the result.
\end{proof}

The $s$-associated $r$-Stirling numbers of the second kind have the
following exponential generating function

\begin{theorem}
We have%
\begin{equation}
\sum\limits_{n\geq sk+(s-1)r}\stirlings {n+r}{k+r}_{r}^{(s)}\frac{x^{n}}{n!%
}=\frac{1}{k!}\left( \exp \left( x\right) -\sum_{i=0}^{s-1}\frac{x^{i}}{i!}%
\right) ^{k}\left( \exp \left( x\right) -\sum_{i=0}^{s-2}\frac{x^{i}}{i!}%
\right) ^{r}.  \label{generatingS2}
\end{equation}
\end{theorem}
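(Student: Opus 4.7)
The plan is to combine a bijective decomposition of $s$-associated $r$-Stirling partitions with the already-established generating function (\ref{genes2}). I would first split each partition counted by $\stirlings{n+r}{k+r}_{r}^{(s)}$ into its $r$ ``distinguished'' blocks (the ones containing one of the first $r$ elements) and its $k$ ``ordinary'' blocks (those containing none of them). Letting $\ell_{j}$ denote the number of elements from $\{r+1,\ldots,n+r\}$ placed in the $j$th distinguished block, the $s$-associated condition forces $\ell_{j}\geq s-1$, and the remaining $m:=n-(\ell_{1}+\cdots+\ell_{r})$ elements form a partition counted by $\stirlings{m}{k}^{(s)}$. A multinomial coefficient accounts for the choice of which movable elements go where, yielding
\begin{equation*}
\stirlings{n+r}{k+r}_{r}^{(s)}=\sum_{\substack{\ell_{1},\ldots,\ell_{r}\geq s-1\\ \ell_{1}+\cdots+\ell_{r}+m=n}}\binom{n}{\ell_{1},\ldots,\ell_{r},m}\stirlings{m}{k}^{(s)}.
\end{equation*}

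Next, I would multiply by $x^{n}/n!$ and sum over $n\geq sk+(s-1)r$. Because $\binom{n}{\ell_{1},\ldots,\ell_{r},m}/n!=1/(\ell_{1}!\cdots\ell_{r}!\,m!)$, the triple-indexed series factorizes into $r+1$ independent sums:
\begin{equation*}
\sum_{n\geq sk+(s-1)r}\stirlings{n+r}{k+r}_{r}^{(s)}\frac{x^{n}}{n!}=\left(\sum_{\ell\geq s-1}\frac{x^{\ell}}{\ell!}\right)^{r}\sum_{m\geq sk}\stirlings{m}{k}^{(s)}\frac{x^{m}}{m!}.
\end{equation*}
The first factor is exactly $\exp(x)-\sum_{i=0}^{s-2}x^{i}/i!$, while the second is $\frac{1}{k!}\bigl(\exp(x)-\sum_{i=0}^{s-1}x^{i}/i!\bigr)^{k}$ by (\ref{genes2}), and their product is the claimed expression.

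The main point that requires care is the bookkeeping in the combinatorial decomposition: the $r$ distinguished blocks are intrinsically labeled by the fixed small element they contain, hence they appear to the $r$th power without any $r!$ correction; conversely, the $k$ ordinary blocks carry no such label and remain unordered, so the only $1/k!$ factor is the one inherited from (\ref{genes2}). Once this labeling distinction is settled, the rest is routine exponential generating function algebra, and no subtle convergence or re-indexing issue arises. An alternative purely algebraic route would start from identity (\ref{gene2}), substituting and using (\ref{genes2}) together with the binomial identity (\ref{y}), in direct parallel with the proof of (\ref{generatingS1}); but the combinatorial proof sketched above is more transparent.
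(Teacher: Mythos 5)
Your proof is correct, but it follows a genuinely different route from the paper's. The paper obtains (\ref{generatingS2}) algebraically from the convolution identity (\ref{gene2}) (the $p=r$ case of (\ref{hoba})): it substitutes that identity into the left-hand side, evaluates the inner sum with (\ref{genes2}), and finishes with the binomial theorem, using that $\frac{x^{s-1}}{(s-1)!}+\bigl(\exp(x)-\sum_{i=0}^{s-1}\frac{x^{i}}{i!}\bigr)=\exp(x)-\sum_{i=0}^{s-2}\frac{x^{i}}{i!}$ --- which is essentially the ``alternative algebraic route'' you mention at the end. You instead argue directly: separating the $r$ blocks labeled by the first $r$ elements (each needing $\ell_j\geq s-1$ additional elements) from the $k$ unlabeled ordinary blocks gives the convolution
\begin{equation*}
\stirlings{n+r}{k+r}_{r}^{(s)}=\sum_{\substack{\ell_{1},\ldots,\ell_{r}\geq s-1\\ \ell_{1}+\cdots+\ell_{r}+m=n}}\binom{n}{\ell_{1},\ldots,\ell_{r},m}\stirlings{m}{k}^{(s)},
\end{equation*}
whose exponential generating function factors immediately into $\bigl(\exp(x)-\sum_{i=0}^{s-2}\frac{x^{i}}{i!}\bigr)^{r}$ times the expression from (\ref{genes2}); your remark about the distinguished blocks being intrinsically labeled (no $r!$) and the ordinary blocks contributing only the inherited $1/k!$ is exactly the point that needs care, and you handle it correctly. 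Your decomposition is in spirit the same one the paper uses to prove the explicit formula (\ref{EXS2}), lifted to the level of generating functions; its advantage is that it depends only on (\ref{genes2}) and avoids the intermediate manipulations in the paper's proof (whose displayed computation contains some garbled factors), whereas the paper's argument has the merit of exhibiting (\ref{generatingS2}) as a direct consequence of its convolution identity (\ref{gene2}).
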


\begin{proof}
Using the identity (\ref{gene2}), we get%
\begin{equation*}
\sum\limits_{n\geq sk+(s-1)r}\stirlings {n+r}{k+r}_{r}^{(s)}\frac{x^{n}}{n!%
}=\frac{(k+r-i)!}{k!}\sum_{i=0}^{r}\binom{r}{i}\left( \frac{x^{s-1}}{\left(
\left( s-1\right) !\right) }\right) ^{i}\sum\limits_{n\geq
sk+(s-1)r}\stirlings {n-i(s-1)}{k+r-i}^{(s)}\frac{x^{n-i(s-1)}}{\left(
n-i(s-1)\right) !},
\end{equation*}%
the second summation can be evaluated using (\ref{genes2}) and gives%
\begin{equation*}
\sum\limits_{n\geq sk+(s-1)r}\stirlings {n+r}{k+r}_{r}^{(s)}\frac{x^{n}}{n!%
}=\frac{1}{k!}\left( \exp \left( x\right) -\sum_{i=0}^{s-1}\frac{x^{i}}{i!}%
\right) ^{k}\sum_{i=0}^{r}\binom{r}{i}\left( \frac{x^{s-1}}{\left( \left(
s-1\right) !\right) }\right) ^{i}\left( \exp \left( x\right)
-\sum_{i=0}^{s-1}\frac{x^{i}}{i!}\right) ^{r-i},
\end{equation*}%
using the binomial theorem we get the result.
\end{proof}

The double generating function for $s$-associated $r$-Stirling numbers of
the second kind is

\begin{theorem}
\begin{equation}
\sum\limits_{n,k}\stirlings {n+r}{k+r}_{r}^{(s)}y^{k}\frac{x^{n}}{n!}=\exp
\left( y\exp \left( x\right) -y\sum_{i=0}^{s-1}\frac{x^{i}}{i!}\right)
\left( \exp \left( x\right) -\sum_{i=0}^{s-1}\frac{x^{i}}{i!}\right) ^{r}.
\end{equation}
\end{theorem}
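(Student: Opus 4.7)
The plan is to derive the double generating function directly from the single-variable generating function (\ref{generatingS2}), following the same pattern used in the analogous result for the first kind. Specifically, I would exchange the order of summation, pulling the sum over $k$ outside, so that for each fixed $k$ the inner sum over $n$ is precisely the quantity evaluated in (\ref{generatingS2}).

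First I would write
\begin{equation*}
\sum_{n,k}\stirlings{n+r}{k+r}_{r}^{(s)}y^{k}\frac{x^{n}}{n!}
=\sum_{k\geq 0}y^{k}\sum_{n\geq sk+(s-1)r}\stirlings{n+r}{k+r}_{r}^{(s)}\frac{x^{n}}{n!},
\end{equation*}
and substitute (\ref{generatingS2}) for the inner sum, which yields
\begin{equation*}
\sum_{k\geq 0}\frac{y^{k}}{k!}\left(\exp(x)-\sum_{i=0}^{s-1}\frac{x^{i}}{i!}\right)^{k}\left(\exp(x)-\sum_{i=0}^{s-2}\frac{x^{i}}{i!}\right)^{r}.
\end{equation*}
The factor involving $r$ does not depend on $k$, so it comes out of the sum.

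Next, I would recognize the remaining sum over $k$ as the Taylor series of the exponential: setting $u=y\bigl(\exp(x)-\sum_{i=0}^{s-1}x^{i}/i!\bigr)$, the sum $\sum_{k\geq 0}u^{k}/k!$ is simply $\exp(u)$. Expanding $u$ gives the first factor of the claimed right-hand side, $\exp\bigl(y\exp(x)-y\sum_{i=0}^{s-1}x^{i}/i!\bigr)$.

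There is essentially no obstacle here — the argument is a purely formal manipulation once (\ref{generatingS2}) is in hand, and it mirrors verbatim the corresponding derivation of the double generating function for the $s$-associated $r$-Stirling numbers of the first kind from (\ref{generatingS1}). The only point requiring attention is checking the summation ranges so that interchanging $\sum_{n}$ and $\sum_{k}$ is legitimate as an identity of formal power series, which is immediate because for each fixed $n$ only finitely many $k$ contribute (indeed $k\leq \lfloor n/s\rfloor$), guaranteeing absolute convergence in the formal-series sense.
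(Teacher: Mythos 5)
Your strategy --- interchange the two summations, insert (\ref{generatingS2}) for the inner sum over $n$, pull the $k$-independent factor out, and recognize the exponential series in $k$ --- is exactly the argument the paper intends: it states this theorem without proof, but proves the first-kind and Lah analogues by precisely this one-line manipulation, and your remark about the formal-series legitimacy of the interchange (only finitely many $k$ for each $n$) is fine.

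There is, however, a real problem at the last step: carried out honestly, your computation does not end at the claimed right-hand side. Substituting (\ref{generatingS2}) produces, as you yourself write, the $k$-independent factor $\left(\exp(x)-\sum_{i=0}^{s-2}\frac{x^{i}}{i!}\right)^{r}$, whereas the theorem as printed has $\left(\exp(x)-\sum_{i=0}^{s-1}\frac{x^{i}}{i!}\right)^{r}$; these are different power series for every $s\geqslant 1$. Your write-up only checks the exponential factor (``gives the first factor of the claimed right-hand side'') and never reconciles the $r$-power factor, so as written the proof silently replaces the exponent range $i\leqslant s-2$ by $i\leqslant s-1$ and therefore does not establish the identity in the form stated. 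In fact the expression you derive is the correct one: a block containing one of the $r$ distinguished elements needs only $s-1$ further elements, which is what (\ref{generatingS2}) (and the weighted variant in the paper's concluding section) encodes, so the range $i\leqslant s-1$ in the displayed theorem is evidently a misprint. To make your proof complete you must either state and prove the corrected identity with $\sum_{i=0}^{s-2}$, or explicitly flag the misprint; you cannot pass from your derived formula to the printed one without comment.
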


The $s$-associated $r$-Lah numbers have the following exponential generating
function

\begin{theorem}
We have%
\begin{equation}
\sum\limits_{n\geq sk+(s-1)r}\lah {n+r}{k+r}_{r}^{(s)}%
\frac{x^{n}}{n!}=\frac{1}{k!}\left( \frac{x^{s}}{\left( 1-x\right) }\right)
^{k}\left( \frac{x^{s-1}}{\left( 1-x\right) ^{2}}(s-(s-1)x)\right) ^{r}.
\label{gninin}
\end{equation}
\end{theorem}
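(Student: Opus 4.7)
The proof plan mirrors the strategy used earlier for the generating functions (\ref{generatingS1}) and (\ref{generatingS2}) of the $s$-associated $r$-Stirling numbers of the first and second kinds: reduce the $r$-restricted quantity to an ``unrestricted'' $s$-associated Lah number via an identity from Section~6, and then invoke the known generating function (\ref{generat}).

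First I would specialize (\ref{TooLong}) to $p=r$, obtaining an expression of $\lah{n}{k}_{r}^{(s)}$ as a double sum over indices $i$ and $j$ whose essential factor is $\lah{n-sr-j}{k-r}^{(s)}$. After substituting $n\mapsto n+r$ and $k\mapsto k+r$, the factorial ratio simplifies to $n!/m!$ where $m:=n-(s-1)r-j$, and the inner Lah factor becomes $\lah{m}{k}^{(s)}$.

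Next I would multiply by $x^{n}/n!$, sum over $n\geq sk+(s-1)r$, and interchange the orders of summation. Writing $x^{n}/n! = x^{(s-1)r+j}\cdot x^{m}/m!$, the innermost sum over $m\geq sk$ decouples and equals $\frac{1}{k!}\bigl(\tfrac{x^{s}}{1-x}\bigr)^{k}$ by (\ref{generat}). What remains is a residual sum
\[
x^{(s-1)r}\sum_{i=0}^{r}\binom{r}{i}s^{r-i}\sum_{j\geq i}\binom{j+r-1}{j-i}x^{j}.
\]
After the change of index $j'=j-i$, the Lemma at (\ref{y}) evaluates the inner $j'$-sum to $x^{i}/(1-x)^{r+i}$, and the $i$-sum is recognized by the binomial theorem as
\[
\left(s+\frac{x}{1-x}\right)^{r}=\left(\frac{s-(s-1)x}{1-x}\right)^{r}.
\]
Collecting the prefactors $x^{(s-1)r}/(1-x)^{r}$ with this power then produces $\bigl(\tfrac{x^{s-1}}{(1-x)^{2}}(s-(s-1)x)\bigr)^{r}$, yielding the stated formula.

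The main technical hurdle is bookkeeping: tracking the index shift $m=n-(s-1)r-j$ together with the condition $m\geq sk$ so that the support of $\lah{m}{k}^{(s)}$ matches the range of summation and (\ref{generat}) applies legitimately. The small algebraic identity $s+\tfrac{x}{1-x}=\tfrac{s-(s-1)x}{1-x}$, which is what produces the non-obvious polynomial factor $s-(s-1)x$ in the final answer, is the only real ``creative'' step; everything else is routine manipulation paralleling the earlier proofs.
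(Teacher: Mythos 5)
Your argument is correct, but it takes a genuinely different route from the paper's. The paper proves (\ref{gninin}) directly from the one-sum explicit formula (\ref{exp2}): after the shift $n\mapsto n+r$, $k\mapsto k+r$ it applies Lemma (\ref{y}) termwise to $\binom{n+r+j-(s-1)(k+r)-1}{k+r+j-1}$ and finishes with the binomial theorem through $1+(s-1)(1-x)=s-(s-1)x$. You instead reduce the $r$-restricted numbers to the unrestricted ones via the $p=r$ case of the convolution identity (\ref{TooLong}) and then quote the known generating function (\ref{generat}); this mirrors exactly how the paper treats the first and second kinds in (\ref{generatingS1}) and (\ref{generatingS2}) via (\ref{S1rsS1s}) and (\ref{gene2}), so your proof is more uniform across the three kinds, at the cost of resting on the heavier identity (\ref{TooLong}) instead of the self-contained formula (\ref{exp2}). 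Your computational steps check out: $\sum_{j\ge i}\binom{r+j-1}{j-i}x^{j}=x^{i}/(1-x)^{r+i}$ by (\ref{y}), then $\sum_{i=0}^{r}\binom{r}{i}s^{r-i}\bigl(x/(1-x)\bigr)^{i}=\bigl((s-(s-1)x)/(1-x)\bigr)^{r}$, which together with the prefactor $x^{(s-1)r}$ and $\frac{1}{k!}\bigl(x^{s}/(1-x)\bigr)^{k}$ give the right-hand side of (\ref{gninin}).

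One caution: the factorial ratio you use, $n!/m!$ with $m=n-(s-1)r-j$, corresponds to the \emph{corrected} form of (\ref{TooLong}). As printed, that identity has $\left(n-r-j+(s-1)p\right)!$ in the denominator (and $\left(n-j+(s-2)r\right)!$ in its $p=r$ display), which is a misprint: the paper's own derivation of (\ref{TooLong}) produces $\left(n-r-(s-1)p-j\right)!$, i.e.\ exactly the factorial of the Lah argument after your substitution, and this corrected form is what reproduces (\ref{RecVer}) at $p=r=1$. So your bookkeeping is right in substance, but you should say explicitly that you are using the corrected denominator; if one inserted the printed formula literally, the ratio would be $n!/\left(n+(s-1)r-j\right)!$ and the computation would not close.
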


\begin{proof}
Using the explicit formula (\ref{exp2}) in the left hand side we get%
\begin{equation*}
\sum\limits_{n\geq sk+(s-1)r}\lah {n+r}{k+r}_{r}^{(s)}%
\frac{x^{n}}{n!}=\frac{1}{k!}\sum\limits_{j=0}^{r}\dbinom{r}{j}%
(s-1)^{r-j}\sum\limits_{n\geq sk+(s-1)r}\dbinom{n+r+j-(s-1)\left( k+r\right)
-1}{k+r+j-1}x^{n},
\end{equation*}%
the second summation in the right side, due to relation (\ref{y}), gives%
\begin{eqnarray*}
\sum\limits_{n\geq sk+(s-1)r}\lah {n+r}{k+r}_{r}^{(s)}%
\frac{x^{n}}{n!} &=&\frac{1}{k!}\sum\limits_{j=0}^{r}\dbinom{r}{j}(s-1)^{r-j}%
\frac{x^{(s-1)(k+r)+k}}{\left( 1-x\right) ^{k+r+j}} \\
&=&\frac{1}{k!}\frac{x^{(s-1)r+sk}}{(1-x)^{k+2r}}\sum\limits_{j=0}^{r}%
\dbinom{r}{j}\left( (s-1)(1-x)\right) ^{r-j}
\end{eqnarray*}%
using the binomial theorem we get the result.
\end{proof}

The double generating function of the $s$-associated $r$-Lah numbers is
given by

\begin{theorem}
\begin{equation}
\sum\limits_{n\geq sk+(s-1)r}\sum\limits_{k\geq 0}\lah
{n+r}{k+r}_{r}^{(s)}\frac{x^{n}}{n!}y^{k}=\left[ \exp \left\{ y\frac{x^{s}}{%
1-x}\right\} \right] \left[ \frac{x^{s-1}}{(1-x)^{2}}(s-(s-1)x)\right] ^{r}.
\end{equation}
\end{theorem}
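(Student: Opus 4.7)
The plan is to obtain this double generating function by simply interchanging the order of summation and invoking the single-variable generating function (\ref{gninin}) proved in the previous theorem, which is precisely the template used earlier in the paper for the analogous results on $s$-associated Lah numbers and on $s$-associated $r$-Stirling numbers.

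First I would rewrite the left-hand side as
\begin{equation*}
\sum_{k \geq 0} y^{k} \sum_{n \geq sk + (s-1)r} \lah{n+r}{k+r}_{r}^{(s)} \frac{x^{n}}{n!}.
\end{equation*}
Applying (\ref{gninin}) to the inner sum substitutes in $\frac{1}{k!}\left(\frac{x^{s}}{1-x}\right)^{k}\left(\frac{x^{s-1}}{(1-x)^{2}}(s-(s-1)x)\right)^{r}$. The factor depending on $r$ is independent of $k$, so it pulls out of the sum over $k$, leaving
\begin{equation*}
\left[\frac{x^{s-1}}{(1-x)^{2}}(s-(s-1)x)\right]^{r}\sum_{k \geq 0}\frac{1}{k!}\left(y\frac{x^{s}}{1-x}\right)^{k}.
\end{equation*}
Recognizing the remaining series as $\exp\!\left\{y\frac{x^{s}}{1-x}\right\}$ finishes the proof.

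There is no real obstacle here: the only subtle point is to verify that the interchange of summation is legitimate, which is justified because (\ref{gninin}) is a formal power series identity (both sides lie in $\mathbb{Q}[[x,y]]$ and the family is supported on $n \geq sk+(s-1)r$, so no cancellation issues arise). Essentially the same three-line argument already appears in the excerpt right after equation (\ref{generat}) and again after (\ref{generatingS1}) and (\ref{generatingS2}), so I would keep the proof correspondingly short.
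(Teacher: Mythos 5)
Your proposal is correct and is essentially the paper's own proof: the paper also just interchanges the order of summation and applies equation (\ref{gninin}), summing the resulting exponential series in $y\,x^{s}/(1-x)$. Your version merely spells out the pulling-out of the $r$-dependent factor and the formal-power-series justification, which the paper leaves implicit.
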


\begin{proof}
Interchanging the order of summation and using equation (\ref{gninin}) we
get the result.
\end{proof}

\section{Conclusion and perspectives}

Roughly speaking, there are many recurrence and congruence relations known
about the $r$-Stirling numbers and the associated Stirling numbers which can
be generalized to $\stirlingf {n}{k}_{r}^{(s)}$, $\stirlings
{n}{k}_{r}^{(s)}$ and $\lah {n}{k}_{r}^{(s)}$. We have
treated a few of these. In this section, we propose some problems :

\begin{itemize}
\item Howard \cite{MR742846} gave, as perspectives, an extension of the
weighted associated Stirling numbers to the Weighted $s$-associated Stirling
numbers without specifying the expressions. In this perspective, as
continuity to our work, we propose the Weighted $s$-associated Stirling
numbers of the first and the second kind, denoted $\stirlingf {n}{k}%
_{\lambda }^{(s)}$ and $\stirlings {n}{k}_{\lambda }^{(s)}$ respectively,
by the following%
\begin{eqnarray}
\sum\limits_{n\geq sk}\stirlingf {n}{k}_{\lambda }^{(s)}\frac{x^{n}}{n!} &=&%
\frac{\left( -1\right) ^{k}}{k!}\left( \frac{1}{\left( 1-x\right) ^{\lambda }%
}-\sum_{i=1}^{s-1}\frac{\left( -\lambda \right) _{i}x^{i}}{i}\right) \left(
\ln \left( 1-x\right) +\sum_{i=1}^{s-1}\frac{x^{i}}{i}\right) ^{k}, \\
\sum\limits_{n\geq s}\stirlings {n}{k}_{\lambda }^{(s)}\frac{x^{n}}{n!} &=&%
\frac{1}{k!}\left( \exp \left( \lambda x\right) -\sum_{i=0}^{s-1}\frac{%
\left( \lambda x\right) ^{i}}{i!}\right) \left( \exp \left( x\right)
-\sum_{i=0}^{s-1}\frac{x^{i}}{i!}\right) ^{k}.
\end{eqnarray}%
\newline
Note that for $s=2$, we get weighted associated Stirling numbers. It seems
possible to derive analog relations of the weighted associated Stirling
numbers, and establish other identities.

\item By the same reasoning, it is interesting to extend these
generalization to the $r$-Stirling numbers. We define the weighted $s$%
-associated $r$-Stirling numbers of the first and the second kind
respectively%
\begin{eqnarray}
\sum_{n}\stirlingf {n+r}{k+r}_{r,\lambda }^{(s)}\frac{%
x^{n}}{n!} &=&\frac{\left( -1\right) ^{k}}{k!}\left( \frac{1}{\left(
1-x\right) ^{\lambda }}-\sum_{i=1}^{s-1}\frac{\left( -\lambda \right)
_{i}x^{i}}{i}\right) \left( \ln \left( 1-x\right) +\sum_{i=1}^{s-1}\frac{%
x^{i}}{i}\right) ^{k}\left( \frac{x^{s-1}}{1-x}\right) ^{r}, \\
\sum_{n}\stirlings {n+r}{k+r}_{r,\lambda }^{(s)}\frac{%
x^{n}}{n!} &=&\frac{1}{k!}\left( \exp \left( \lambda x\right)
-\sum_{i=0}^{s-1}\frac{\left( \lambda x\right) ^{i}}{i!}\right) \left( \exp
\left( x\right) -\sum_{i=0}^{s-1}\frac{x^{i}}{i!}\right) ^{k}\left( \exp
\left( x\right) -\sum_{i=0}^{s-2}\frac{x^{i}}{i!}\right) ^{r}.
\end{eqnarray}%
\newline
It will be nice to investigate the combinatorial meaning and drive all the
combinatorial identities. Also, for $s=1$, we get the definition of the
weighted $r$-Stirling numbers as follows 
\begin{eqnarray}
\sum\limits_{n\geq sk+(s-1)r}\stirlingf {n+r}{k+r}_{r,\lambda }\frac{x^{n}}{%
n!} &=&\frac{\left( -1\right) ^{k}}{k!}\frac{1}{\left( 1-x\right) ^{\lambda
+r}}\left( \ln \left( 1-x\right) \right) ^{k}, \\
\sum\limits_{n\geq sk+(s-1)r}\stirlings {n+r}{k+r}_{r,\lambda }\frac{x^{n}%
}{n!} &=&\frac{1}{k!}\left( \exp \left( \lambda x\right) -1\right) \left(
\exp \left( x\right) -1\right) ^{k}\exp \left( rx\right) .
\end{eqnarray}

\item It will be nice to investigate the different generalization (weighted,
degenerated) of the Lah numbers and $r$-Lah numbers.

\item The authors and Belkhir \cite{BelBou141} define the $\lah {n}{k}^{\alpha ,\beta }$ as the weight of a partition of $n$
elements into $k$ lists such that the element inserted as head list has
weight $\beta $ except the first inserted one which has weight $1$ and the
element inserted after an other one has weight $\alpha $. This
interpretation allow an extension to the $s$-associated aspect by adding the
known restriction (at least $s$ elements by list).

\item An other perspective of this work is to consider the Whitney numbers (see \cite%
{MR1415279,MR1453407,MR1659426,MR2900003}) and $r$-Whitney numbers (see \cite%
{MR2926106}) and to introduce the $s$-associated situation by two approaches: the first one via the generating function and the second one using the combinatorial interpretation (see \cite{BelBou142}).
\end{itemize}

\section{Tables of the $s$-associated $r$-Stirling numbers of the three kinds%
}

\begin{center}
\begin{equation*}
\begin{tabular}{c|cccccc}
\hline
$n\backslash k$ & $3$ & $4$ & $5$ & $6$ & $7$ & $8$ \\ \hline
$6$ & $6$ &  &  &  &  &  \\ 
$7$ & $72$ &  &  &  &  &  \\ 
$8$ & $720$ & $60$ &  &  &  &  \\ 
$9$ & $7200$ & $1320$ &  &  &  &  \\ 
$10$ & $75\,600$ & $21\,420$ & $630$ &  &  &  \\ 
$11$ & $846\,720$ & $320\,544$ & $21\,840$ &  &  &  \\ 
$12$ & $10\,160\,640$ & $4753\,728$ & $519\,120$ & $7560$ &  &  \\ 
$13$ & $130\,636\,800$ & $72\,005\,760$ & $10\,795\,680$ & $378\,000$ &  & 
\\ 
$14$ & $1796\,256\,000$ & $1129\,788\,000$ & $213\,804\,360$ & $12\,335\,400$
& $103\,950$ &  \\ 
$15$ & $26\,\allowbreak 345\,088\,000$ & $18\,\allowbreak 486\,230\,400$ & $%
4191\,881\,760$ & $339\,255\,840$ & $7068\,600$ &  \\ 
$16$ & $410\,\allowbreak 983\,372\,800$ & $316\,\allowbreak 406\,787\,840$ & 
$83\,\allowbreak 018\,048\,256$ & $8627\,739\,120$ & $302\,702\,400$ & $%
1621\,620$ \\ 
$17$ & $6799\,\allowbreak 906\,713\,600$ & $5670\,\allowbreak 985\,582\,080$
& $1679\,\allowbreak 434\,428\,672$ & $212\,\allowbreak 106\,454\,560$ & $%
10\,\allowbreak 621\,490\,880$ & $143\,783\,640$ \\ \hline
\end{tabular}%
\end{equation*}%
Some values of the $2$-associated $3$-Stirling numbers of the first kind%
\begin{equation*}
\begin{tabular}{c|ccccc}
\hline
$n\backslash k$ & $2$ & $3$ & $4$ & $5$ & $6$ \\ \hline
$6$ & $24$ &  &  &  &  \\ 
$7$ & $240$ &  &  &  &  \\ 
$8$ & $2160$ &  &  &  &  \\ 
$9$ & $20\,160$ & $\allowbreak 1680$ &  &  &  \\ 
$10$ & $201\,600$ & $\allowbreak 36\,960$ &  &  &  \\ 
$11$ & $2177\,280$ & $616\,896$ &  &  &  \\ 
$12$ & $25\,401\,600$ & $9616\,320$ & $201\,600$ &  &  \\ 
$13$ & $319\,334\,400$ & $145\,774\,080$ & $7761\,600$ &  &  \\ 
$14$ & $4311\,014\,400$ & $2329\,015\,680$ & $206\,569\,440$ &  &  \\ 
$15$ & $62\,\allowbreak 270\,208\,000$ & $39\,\allowbreak 165\,984\,000$ & $%
4817\,292\,480$ & $38\,438\,400$ &  \\ 
$16$ & $958\,\allowbreak 961\,203\,200$ & $672\,\allowbreak 898\,786\,560$ & 
$106\,\allowbreak 815\,893\,184$ & $2287\,084\,800$ &  \\ 
$17$ & $15\,692\,\allowbreak 092\,416\,000$ & $12\,080\,\allowbreak
986\,444\,800$ & $2337\,\allowbreak 623\,608\,320$ & $88\,\allowbreak
691\,803\,200$ &  \\ 
$18$ & $271\,996\,\allowbreak 268\,544\,000$ & $226\,839\,\allowbreak
423\,283\,200$ & $51\,485\,\allowbreak 284\,730\,880$ & $2886\,\allowbreak
166\,483\,200$ & $10\,\allowbreak 762\,752\,000$ \\ 
$19$ & $4979\,623\,\allowbreak 993\,344\,000$ & $4453\,872\,\allowbreak
650\,035\,200$ & $1153\,763\,\allowbreak 447\,316\,480$ & $%
86\,362\,\allowbreak 805\,168\,640$ & $914\,\allowbreak 833\,920\,000$ \\ 
\hline
\end{tabular}%
\end{equation*}%
Some values of the $3$-associated $2$-Stirling numbers of the first kind%
\begin{equation*}
\begin{tabular}{c|cccc}
\hline
$n\backslash k$ & $3$ & $4$ & $5$ & $6$ \\ \hline
$9$ & $720$ &  &  &  \\ 
$10$ & $15\,120$ &  &  &  \\ 
$11$ & $241\,920$ &  &  &  \\ 
$12$ & $3628\,800$ & $120\,960$ &  &  \\ 
$13$ & $54\,432\,000$ & $4536\,000$ &  &  \\ 
$14$ & $838\,252\,800$ & $117\,754\,560$ &  &  \\ 
$15$ & $13\,412\,044\,800$ & $2682\,408\,960$ & $26\,611\,200$ &  \\ 
$16$ & $224\,172\,748\,800$ & $57\,916\,892\,160$ & $1556\,755\,200$ &  \\ 
$17$ & $3923\,023\,104\,000$ & $1239\,100\,934\,400$ & $59\,\allowbreak
390\,210\,880$ &  \\ 
$18$ & $71\,922\,090\,240\,000$ & $26\,544\,536\,282\,880$ & $%
1902\,\allowbreak 484\,584\,000$ & $8072\,064\,000$ \\ 
$19$ & $1380\,904\,\allowbreak 132\,608\,000$ & $592\,364\,034\,662\,400$ & $%
56\,075\,\allowbreak 567\,708\,160$ & $678\,\allowbreak 053\,376\,000$ \\ 
$20$ & $27\,743\,619\,\allowbreak 391\,488\,000$ & $13\,356\,216\,902\,246%
\,400$ & $1589\,118\,\allowbreak 272\,501\,760$ & $35\,651\,\allowbreak
077\,862\,400$ \\ \hline
\end{tabular}%
\end{equation*}

Some values of the $3$-associated $3$-Stirling numbers of the first kind

\begin{equation*}
\begin{tabular}{c|cccccc}
\hline
$n\backslash k$ & $3$ & $4$ & $5$ & $6$ & $7$ & $8$ \\ \hline
$6$ & $6$ &  &  &  &  &  \\ 
$7$ & $36$ &  &  &  &  &  \\ 
$8$ & $150$ & $60$ &  &  &  &  \\ 
$9$ & $540$ & $660$ &  &  &  &  \\ 
$10$ & $1806$ & $4620$ & $630$ &  &  &  \\ 
$11$ & $5796$ & $26\,376$ & $10\,920$ &  &  &  \\ 
$12$ & $18\,150$ & $134\,316$ & $114\,660$ & $7560$ &  &  \\ 
$13$ & $55\,980$ & $637\,020$ & $947\,520$ & $189\,000$ &  &  \\ 
$14$ & $171\,006$ & $2882\,220$ & $6798\,330$ & $2772\,000$ & $103\,950$ & 
\\ 
$15$ & $519\,156$ & $12\,623\,952$ & $44\,482\,680$ & $31\,221\,960$ & $%
3534\,300$ &  \\ 
$16$ & $1569\,750$ & $54\,031\,692$ & $273\,060\,216$ & $299\,459\,160$ & $%
68\,918\,850$ & $1621\,620$ \\ 
$17$ & $4733\,820$ & $227\,425\,380$ & $1600\,815\,216$ & $2578\,495\,920$ & 
$1013\,632\,620$ & $71\,891\,820$ \\ 
$18$ & $14\,250\,606$ & $945\,535\,500$ & $9069\,810\,750$ & $%
20\,\allowbreak 561\,420\,880$ & $12\,\allowbreak 509\,597\,100$ & $%
1797\,295\,500$ \\ 
$19$ & $42\,850\,116$ & $3895\,163\,928$ & $50\,\allowbreak 074\,806\,600$ & 
$154\,\allowbreak 904\,109\,360$ & $136\,\allowbreak 912\,175\,400$ & $%
33\,\allowbreak 423\,390\,000$ \\ \hline
\end{tabular}%
\end{equation*}%
$\allowbreak $Some values of the $2$-associated $3$-Stirling numbers of the
second kind

\begin{equation*}
\begin{tabular}{c|cccccc}
\hline
$n\backslash k$ & $2$ & $3$ & $4$ & $5$ & $6$ & $7$ \\ \hline
$6$ & $6$ &  &  &  &  &  \\ 
$7$ & $20$ &  &  &  &  &  \\ 
$8$ & $50$ &  &  &  &  &  \\ 
$9$ & $112$ & $210$ &  &  &  &  \\ 
$10$ & $238$ & $1540$ &  &  &  &  \\ 
$11$ & $492$ & $7476$ &  &  &  &  \\ 
$12$ & $1002$ & $30\,240$ & $12\,600$ &  &  &  \\ 
$13$ & $2024$ & $110\,550$ & $161\,700$ &  &  &  \\ 
$14$ & $4070$ & $379\,764$ & $1286\,670$ &  &  &  \\ 
$15$ & $8164$ & $1252\,680$ & $8168\,160$ & $1201\,200$ &  &  \\ 
$16$ & $16\,354$ & $4020\,016$ & $45\,411\,366$ & $23\,823\,800$ &  &  \\ 
$17$ & $32\,736$ & $12\,656\,826$ & $231\,591\,360$ & $281\,331\,050$ &  & 
\\ 
$18$ & $65\,502$ & $39\,315\,588$ & $1112\,731\,620$ & $2574\,371\,800$ & $%
168\,168\,000$ &  \\ 
$19$ & $131\,036$ & $120\,953\,436$ & $5122\,253\,136$ & $\allowbreak
20\,\allowbreak 176\,035\,880$ & $4764\,760\,000$ &  \\ 
$20$ & $262\,106$ & $369\,535\,392$ & $22\,\allowbreak 845\,529\,356$ & $%
142\,\allowbreak 501\,719\,360$ & $78\,\allowbreak 189\,711\,600$ &  \\ 
$21$ & $524\,248$ & $1123\,340\,382$ & $99\,\allowbreak 494\,683\,548$ & $%
934\,\allowbreak 588\,410\,756$ & $973\,\allowbreak 654\,882\,200$ & $%
32\,\allowbreak 590\,958\,400$ \\ \hline
\end{tabular}%
\end{equation*}%
$\allowbreak $Some values of the $3$-associated $2$-Stirling numbers of the
second kind

\begin{equation*}
\begin{tabular}{c|ccccc}
\hline
$n\backslash k$ & $3$ & $4$ & $5$ & $6$ & $7$ \\ \cline{2-6}
$9$ & $90$ &  &  &  &  \\ 
$10$ & $630$ &  &  &  &  \\ 
$11$ & $2940$ &  &  &  &  \\ 
$12$ & $11\,508$ & $7560$ &  &  &  \\ 
$13$ & $40\,950$ & $94\,500$ &  &  &  \\ 
$14$ & $137\,610$ & $734\,580$ &  &  &  \\ 
$15$ & $445\,896$ & $4569\,180$ & $831\,600$ &  &  \\ 
$16$ & $1410\,552$ & $24\,959\,220$ & $16\,216\,200$ &  &  \\ 
$17$ & $4390\,386$ & $125\,381\,256$ & $188\,558\,370$ &  &  \\ 
$18$ & $13\,514\,046$ & $594\,714\,120$ & $\allowbreak 1701\,649\,950$ & $%
126\,126\,000$ &  \\ 
$19$ & $41\,278\,068$ & $2707\,865\,160$ & $13\,\allowbreak 172\,479\,320$ & 
$3531\,528\,000$ &  \\ 
$20$ & $125\,405\,532$ & $11\,\allowbreak 965\,834\,608$ & $92\,\allowbreak
024\,532\,600$ & $57\,\allowbreak 320\,062\,800$ &  \\ 
$21$ & $379\,557\,198$ & $51\,\allowbreak 706\,343\,676$ & $597\,\allowbreak
753\,095\,940$ & $706\,\allowbreak 637\,731\,800$ & $25\,\allowbreak
729\,704\,000$ \\ 
$22$ & $1145\,747\,538$ & $219\,\allowbreak 672\,404\,652$ & $%
3679\,\allowbreak 670\,518\,524$ & $7344\,\allowbreak 721\,664\,280$ & $%
977\,\allowbreak 728\,752\,000$ \\ 
$23$ & $3452\,182\,656$ & $921\,\allowbreak 197\,481\,924$ & $%
21\,746\,\allowbreak 705\,483\,880$ & $67\,927\,\allowbreak 123\,063\,800$ & 
$21\,102\,\allowbreak 645\,564\,000$ \\ 
$24$ & $10\,\allowbreak 388\,002\,848$ & $3824\,\allowbreak 306\,218\,236$ & 
$124\,527\,\allowbreak 413\,730\,720$ & $577\,211\,\allowbreak 131\,256\,760$
& $340\,398\,\allowbreak 980\,922\,000$ \\ \hline
\end{tabular}%
\end{equation*}%
$\allowbreak $

Some values of the $3$-associated $3$-Stirling numbers of the second kind

\begin{equation*}
\begin{tabular}{c|cccc}
\hline
$n\backslash k$ & $3$ & $4$ & $5$ & $6$ \\ \hline
$6$ & $\allowbreak 48$ &  &  &  \\ 
$7$ & $864$ &  &  &  \\ 
$8$ & $12\,240$ & $960$ &  &  \\ 
$9$ & $166\,320$ & $31\,680$ &  &  \\ 
$10$ & $2298\,240$ & $735\,840$ & $20\,160$ &  \\ 
$11$ & $33\,022\,080$ & $15\,200\,640$ & $1048\,320$ &  \\ 
$12$ & $497\,871\,360$ & $302\,279\,040$ & $35\,925\,120$ & $483\,840$ \\ 
$13$ & $7903\,526\,400$ & $5994\,777\,600$ & $1043\,280\,000$ & $%
36\,288\,000 $ \\ 
$14$ & $132\,\allowbreak 204\,441\,600$ & $120\,\allowbreak 708\,403\,200$ & 
$28\,\allowbreak 101\,427\,200$ & $1716\,422\,400$ \\ 
$15$ & $2328\,\allowbreak 905\,779\,200$ & $2491\,\allowbreak 766\,323\,200$
& $732\,\allowbreak 872\,448\,000$ & $66\,\allowbreak 501\,388\,800$ \\ 
$16$ & $43\,153\,\allowbreak 254\,144\,000$ & $53\,016\,\allowbreak
855\,091\,200$ & $18\,942\,\allowbreak 597\,273\,600$ & $2325\,\allowbreak
792\,268\,800$ \\ 
$17$ & $839\,788\,\allowbreak 479\,129\,600$ & $1166\,096\,\allowbreak
823\,091\,200$ & $491\,947\,\allowbreak 097\,241\,600$ & $%
77\,022\,\allowbreak 020\,275\,200$ \\ \hline
\end{tabular}%
\end{equation*}

Some values for the $2$-associated $3$-Lah numbers

\begin{equation*}
\begin{tabular}{c|cccc}
\hline
$n\backslash k$ & $2$ & $3$ & $4$ & $5$ \\ \hline
$6$ & $\allowbreak 216$ &  &  &  \\ 
$7$ & $2880$ &  &  &  \\ 
$8$ & $33\,120$ &  &  &  \\ 
$9$ & $383\,040$ & $45\,360$ &  &  \\ 
$10$ & $4636\,800$ & $1330\,560$ &  &  \\ 
$11$ & $59\,512\,320$ & $28\,667\,520$ &  &  \\ 
$12$ & $812\,851\,200$ & $562\,464\,000$ & $16\,329\,600$ &  \\ 
$13$ & $11\,\allowbreak 815\,372\,800$ & $10\,\allowbreak 777\,536\,000$ & $%
838\,252\,800$ &  \\ 
$14$ & $182\,\allowbreak 499\,609\,600$ & $207\,\allowbreak 886\,694\,400$ & 
$28\,\allowbreak 979\,596\,800$ &  \\ 
$15$ & $2988\,\allowbreak 969\,984\,000$ & $4097\,\allowbreak 379\,686\,400$
& $859\,\allowbreak 328\,870\,400$ & $\allowbreak 9340\,531\,200$ \\ 
$16$ & $51\,783\,\allowbreak 904\,972\,800$ & $83\,168\,\allowbreak
089\,804\,800$ & $23\,799\,\allowbreak 673\,497\,600$ & $741\,\allowbreak
015\,475\,200$ \\ 
$17$ & $946\,756\,\allowbreak 242\,432\,000$ & $1745\,745\,\allowbreak
281\,280\,000$ & $640\,760\,\allowbreak 440\,320\,000$ & $%
37\,486\,\allowbreak 665\,216\,000$ \\ \hline
\end{tabular}%
\end{equation*}%
Some values for the $3$-associated $2$-Lah numbers

\begin{equation*}
\begin{tabular}{c|cccc}
\hline
$n\backslash k$ & $3$ & $4$ & $5$ & $6$ \\ \hline
$9$ & $\allowbreak 19\,440$ &  &  &  \\ 
$10$ & $544\,320$ &  &  &  \\ 
$11$ & $11\,249\,280$ &  &  &  \\ 
$12$ & $212\,647\,680$ & $9797\,760$ &  &  \\ 
$13$ & $3940\,876\,800$ & $489\,888\,000$ &  &  \\ 
$14$ & $73\,\allowbreak 766\,246\,400$ & $16\,\allowbreak 525\,555\,200$ & 
&  \\ 
$15$ & $1414\,\allowbreak 970\,726\,400$ & $479\,\allowbreak 001\,600\,000$
& $6466\,521\,600$ &  \\ 
$16$ & $28\,021\,\allowbreak 593\,600\,000$ & $12\,989\,\allowbreak
565\,388\,800$ & $504\,\allowbreak 388\,684\,800$ &  \\ 
$17$ & $575\,115\,\allowbreak 187\,046\,400$ & $342\,959\,\allowbreak
397\,580\,800$ & $25\,107\,\allowbreak 347\,865\,600$ &  \\ 
$18$ & $12\,255\,524\,\allowbreak 176\,896\,000$ & $9007\,261\,\allowbreak
046\,784\,000$ & $\allowbreak 1030\,447\,\allowbreak 401\,984\,000$ & $%
5884\,\allowbreak 534\,656\,000$ \\ 
$19$ & $271\,347\,662\,\allowbreak 057\,472\,000$ & $238\,268\,731\,%
\allowbreak 244\,544\,000$ & $38\,309\,628\,\allowbreak 284\,928\,000$ & $%
659\,067\,\allowbreak 881\,472\,000$ \\ 
$20$ & $6242\,314\,363\,\allowbreak 084\,800\,000$ & $6397\,038\,394\,%
\allowbreak 306\,560\,000$ & $1350\,900\,851\,\allowbreak 908\,608\,000$ & $%
45\,350\,147\,\allowbreak 082\,240\,000$ \\ \hline
\end{tabular}%
\end{equation*}%
Some values for the $3$-associated $3$-Lah numbers
\end{center}

\end{document}